\numberwithin{equation}{section}
\renewcommand{\labelenumi}{\textup{(\roman{enumi})}}
\renewcommand\theenumi\labelenumi 
\theoremstyle{plain}
\newtheorem{theorem}{Theorem}[section]
\newtheorem{corollary}[theorem]{Corollary}
\newtheorem{lemma}[theorem]{Lemma}
\theoremstyle{definition}
\newtheorem{remark}[theorem]{Remark}
\newtheorem{example}[theorem]{Example}
\newcommand\diff{\mathrm{d}}
\newcommand{\real}{\mathds{R}}
\newcommand{\nat}{\mathds{N}}
\newcommand{\integer}{\mathds{Z}}
\newcommand{\PP}{\mathds{P}}
\newcommand{\EE}{\mathds{E}}
\newcommand{\cC}{\mathcal{C}}
\newcommand{\cF}{\mathcal{F}}
\newcommand{\cL}{\mathcal{L}}
\newcommand{\ee}{\mathrm{e}}
\newcommand{\bone}{\mathds{1}}
\newcommand{\BF}{\operatorname{BF}}
\newcommand{\CM}{\operatorname{CM}}
\newcommand{\id}{\operatorname{id}}
\newcommand{\supp}{\operatorname{supp}}
\newcommand{\loc}{\mathrm{loc}}
\renewcommand{\epsilon}{\varepsilon} 
\definecolor{see}{HTML}{0000AA}
\definecolor{doubt}{HTML}{AF0000}
\definecolor{todo}{HTML}{0000AA}
\begin{document}
\frenchspacing
\allowdisplaybreaks[4]

\title{\bfseries On $q$-scale functions of spectrally negative L\'evy processes}
\author{Anita Behme\thanks{Technische Universit\"at
		Dresden, Institut f\"ur Mathematische Stochastik, Helmholtzstra{\ss}e 10, 01069 Dresden, Germany. \texttt{anita.behme@tu-dresden.de},  \texttt{david.oechsler@tu-dresden.de}, \texttt{rene.schilling@tu-dresden.de}},\;  David Oechsler$^\ast$ and Ren\'{e} L.\ Schilling$^\ast$}
\date{\small To appear in \emph{Advances in Applied Probability} 55.1 (March 2023)}
\maketitle

\begin{abstract}
We obtain series expansions of the $q$-scale functions of arbitrary spectrally negative L\'{e}vy processes, including processes with infinite jump activity, and use these to derive various new examples of explicit $q$-scale functions. Moreover, we study smoothness properties of the $q$-scale functions of spectrally negative L\'{e}vy processes with infinite jump activity. This complements previous results of Chan et al. \cite{ChanKypSavov} for spectrally negative L\'{e}vy processes with Gaussian component or bounded variation.

\medskip\noindent
MSC 2020: \emph{primary} 60G51, 60J45. \emph{secondary} 91G05.

\medskip\noindent
\emph{Keywords:} Laplace transform; q-scale functions; series expansions; smoothness; spectrally one-sided L\'{e}vy processes; Volterra equations; Bernstein functions; Doney's conjecture.
\end{abstract}

\section{Introduction}\label{S0}


Spectrally negative L\'evy processes, i.e.\ L\'evy processes whose jumps are negative, play a special role in applied mathematics since many real-life applications in queueing, risk, finance, etc.,\ show only jumps or shocks in one direction. Moreover, the restriction to one-sided jumps also has mathematical advantages since spectrally negative L\'{e}vy processes admit $q$-scale functions.
The $q$-scale functions get their name from the related scale functions of regular diffusions and, just as for diffusions, many fluctuation identities of spectrally one-sided L\'{e}vy processes may be expressed in terms of $q$-scale functions, see e.g.\ \cite{berto97, emery73, roger90, roger00} or \cite{doney07, kuzne13}, and the references given there.

Let $(L_t)_{t\geq 0}$ be a spectrally negative L\'evy process with Laplace exponent $\psi$. For every $q\geq 0$ the $q$-scale function is defined as the unique function $W^{(q)}:\real \to\real_+$ such that $W^{(q)}(x)=0$ for all $x<0$ and whose Laplace transform is given by
\begin{align}\label{def-scalefunction}
	\cL[W^{(q)}](\beta) = \frac1{\psi(\beta)-q},\quad \beta>\Phi(q) :=\sup\{y: \psi(y)=q\}.
\end{align}
However, it is not often possible to invert the Laplace transform explicitly and to obtain $W^{(q)}$ in a closed-form expression; see e.g.\ \cite{hubal10} for a collection of known cases. Although there are  efficient methods to evaluate $q$-scale functions numerically, cf.\ \cite{kuzne13}, or to approximate them, cf.\ \cite{egami14}, it is of considerable interest to expand the set of examples with analytic expressions.  Quite often, evaluating the scale function is only an intermediate step, in the sense that one needs functions of $W^{(q)}$, rather than $W^{(q)}$ itself. This shows the limitations of numerical or approximative approaches, e.g.\ if one has to estimate the jump measure, which is the case in most applications.

Recently, Landriault \& Willmot \cite{landr20} proposed the following method to derive an explicit representation of the $q$-scale function of a spectrally negative L\'{e}vy process \emph{with finite L\'evy measure}, i.e.\ of a spectrally negative compound Poisson process: determine suitable functions $f,g:(\Phi(q),\infty)\to\real$ and a constant $b\neq0$ to rewrite the right-hand side of \eqref{def-scalefunction} in a form which can be expanded into a geometric series, i.e.\ such that
\begin{align}\label{eq-geo_series}
	\frac{\cL g(\beta)}{b-\cL f(\beta)}
    = \cL g(\beta)\sum_{n=0}^\infty \frac{(\cL f(\beta))^n}{b^{n+1}}
\end{align}
on the set $\{\beta\geq 0: |\cL f(\beta)|<b\}$. This Laplace transform can then easily be inverted term-by-term. The same idea was already used in \cite{chan11} in order to derive smoothness properties of the $q$-scale functions of spectrally negative L\'{e}vy processes with paths of bounded variation or with Gaussian component.

In Section~\ref{S4} of the present article we show how one can modify the approach of \cite{landr20} to obtain series representations of the $q$-scale functions of spectrally negative L\'evy processes with infinite jump activity. Moreover, at least for certain subclasses, that have not been treated in \cite{chan11}, we study the smoothness of scale functions in Section~\ref{S5}.  The final Section~\ref{S3} contains several auxiliary technical lemmas which we need for our proofs.


\section{Preliminaries}\label{S2}


    Most of our notation is standard or self-explanatory. Throughout, $\nat = \{0,1,2,.\dots\}$ are the natural numbers starting from $0$, and $\real_+=[0,\infty)$ is the non-negative half-line.  We write $\partial_x:=\frac{\partial}{\partial x}$, and we drop the subscript $x$ if no confusion is possible. By $\bone$, and $\id$, we mean the constant function $x\mapsto 1$, and the identity map $x\mapsto x$ on $\real_+$, respectively. If needed, we extend a function $f:(0,\infty)\to\real$ onto $[0,\infty)$ by setting $f(0):= f(0+) = \lim_{  y\downarrow 0}f(y)$ if the limit exists in $\real$, and $f(0):=0$ otherwise. In particular, this ensures that $f\in L^1_\loc(\real_+)$ implies integrability at $0$.

\subsection{Laplace transforms and convolutions}

Throughout, $\cL$ denotes the (one-sided) Laplace transform
\begin{align}
	\cL\mu(\beta) := \cL[\mu](\beta) := \int_{[0,\infty)} \ee^{-\beta x} \,\mu(\diff x),
\end{align}
for a measure $\mu$ on $[0,\infty)$ and every $\beta\geq 0$ for which the integral converges. As usual, if $\mu$ is absolutely continuous w.r.t.\ Lebesgue measure with a locally integrable density $f$, i.e.\ $\mu(\diff x) = f(x)\,\diff x$, we write $\mu\ll \diff x$, identify $\mu$ and $f$, and write $\cL f=\cL\mu$.

Let $\mu$ and $\nu$ be two measures with support in $\real_+$ such that the Laplace transforms $\cL\mu$ and $\cL\nu$ converge for some $\beta_0>0$. Then they also converge for all $\beta\in[\beta_0,\infty)$ and we have
\begin{align}\label{eq-lapl_convo}
	\cL[\mu*\nu](\beta) = \cL\mu(\beta) \cL\nu(\beta) \quad\text{on\ \ }[\beta_0,\infty),
\end{align}
for the convolution of the measures
\begin{align}
	(\mu*\nu)(B):=\int_{\real}\int_{\real} \bone_B(x+y) \,\mu(\diff x)\,\nu(\diff y),\quad\text{for all Borel sets\ \ } B\subset\real.
\end{align}
If $\mu$ and $\nu$ are absolutely continuous w.r.t.\ Lebesgue measure with locally integrable densities $f$ and $g$, then \eqref{eq-lapl_convo} becomes
\begin{align*}
	\cL[f*g](\beta) = \cL f(\beta) \cL g(\beta),
\end{align*}
where the convolution is given by
\begin{align*}
	f*g(x)
	= \int_\real f(x-y)g(y)\,\diff y
	= \int_{[0,x]} f(x-y)g(y)\,\diff y,
	\quad x\geq 0.
\end{align*}
The second equality is due to the fact that the supports of $f$ and $g$ are contained in $\real_+$. If $f,g\in L^1_{\loc}(\real_+)$, then for all $x\geq 0$
\begin{align}\label{eq-Young}
	\|f*g\|_{L^1([0,x])}\leq \|f\|_{L^1([0,x])}\|g\|_{L^1([0,x])},
\end{align}
where $\|f\|_{L^1([0,x])}:=\int_{[0,x]} |f(x)|\,\diff x$. Finally, if $f\in AC[0,\infty)$ or $f\in AC(0,\infty)$ is absolutely continuous, integration by parts yields
\begin{align}\label{eq-LapDif}
	\cL[\partial f](\beta)=\beta \cL f(\beta)-f(0+).
\end{align}

\subsection{Spectrally negative L\'evy processes }
	
Throughout this article, $(L_t)_{t\geq 0}$ denotes a spectrally negative L\'{e}vy process,  \((\mathbb{P}_x)_{x\in\real}\) the law of \((L_t)_{t\geq0}\), and \((\cF_t)_{t\geq0}\) the natural filtration satisfying the usual assumptions. A spectrally negative L\'{e}vy process is   a one-dimensional L\'{e}vy process with no positive jumps and Laplace exponent given by
\begin{align*}
    \psi(\beta)
    := \frac{1}{t} \log \mathbb{E}\left[\ee^{\beta L_t}\right]
    = c\beta + \sigma^2\beta^2 + \int_{(-\infty,0)}\left(\ee^{\beta x}-1-\beta x  \bone_{(0,1)}(|x|) \right) \widetilde\nu(\diff x),
    \quad \beta\geq 0,
\end{align*}
where $(c, 2 \sigma^2, \widetilde{\nu})$ is the characteristic triplet of $(L_t)_{t\geq 0}$. To simplify notation, we denote by $\nu$ the reflection of $\widetilde\nu$, i.e.\ $\nu(B)=\widetilde\nu(-B)$ for all measurable sets $B\subset\real$, which yields
\begin{align}\label{eq-LaplaceExponent}
    \psi(\beta)
    = c\beta + \sigma^2\beta^2 + \int_{(0,\infty)} \left(\ee^{-\beta x}-1+\beta x\bone_{(0,1)}(x) \right) \nu(\diff x),
    \quad \beta\geq 0.
\end{align}
Depending on the characteristics $(c, 2 \sigma^2, \widetilde{\nu})$ we may simplify the representation of the Laplace exponent $\psi$, see Lemma~\ref{lem-ExpRep}. Although such representations frequently appear in the literature, we provide the short arguments, also to fix notation and ideas.
\begin{lemma}\label{lem-ExpRep}
    Let $(L_t)_{t\geq 0}$ be a spectrally negative L\'{e}vy process with characteristic triplet $(c, 2\sigma^2,\widetilde\nu)$ and Laplace exponent $\psi$.
	\begin{enumerate}
	\item\label{lem-ExpRep-i} \textup{[}jumps of bounded variation\textup{]}
		If $\int_{(0,1)} y\,\nu(\diff y) < \infty$, then
		\begin{gather*}
		  \psi(\beta) = c'\beta + \sigma^2\beta^2 - \beta\cL\bar\nu(\beta),
		\end{gather*}
		with $c'=c+\int_{(0,1)} y\,\nu(\diff y)$ and $\bar\nu(x):= \nu([x,\infty))$ being the tail function of $\nu$.

	\item\label{lem-ExpRep-ii} \textup{[}finite first moment\textup{]}
		If $\int_{[1,\infty)} y\,\nu(\diff y) < \infty$, then
		\begin{gather*}
			\psi(\beta) = c''\beta + \sigma^2\beta^2 + \beta^2\cL\bar{\bar\nu}(\beta),
		\end{gather*}
        with $c'' = c - \int_{[1,\infty)} y \,\nu(\diff y)  = \mathbb{E} L_1$ and $\bar{\bar\nu}(x) := \int_{ [x,\infty)} \bar\nu(y)\,\diff y$ being an integrated tail function of $\nu$.
	\end{enumerate}	
\end{lemma}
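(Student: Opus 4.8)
The plan is to read both identities off the L\'evy--Khintchine representation \eqref{eq-LaplaceExponent} by the same two steps. First, under the moment hypothesis in force, split off from the integrand the part that is linear in $\beta$ and $\nu$-integrable, and absorb it into the drift coefficient (turning $c$ into $c'$, resp.\ $c''$). Second, rewrite the remaining $\nu$-integral as a Laplace transform by inserting one of the elementary identities (valid for all $x,\beta\ge0$)
\begin{align*}
  \ee^{-\beta x}-1 = -\beta\int_0^x \ee^{-\beta y}\,\diff y, \qquad
  \ee^{-\beta x}-1+\beta x = \beta^2\int_0^x \ee^{-\beta y}(x-y)\,\diff y,
\end{align*}
and interchanging the order of integration by Tonelli's theorem (everything in sight is of one sign after this substitution).

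For part~\eqref{lem-ExpRep-i}: the hypothesis $\int_{(0,1)} y\,\nu(\diff y)<\infty$ makes $x\mapsto\beta x\bone_{(0,1)}(x)$, hence also $x\mapsto\ee^{-\beta x}-1$, $\nu$-integrable, so \eqref{eq-LaplaceExponent} rearranges to $\psi(\beta)=c'\beta+\sigma^2\beta^2+\int_{(0,\infty)}(\ee^{-\beta x}-1)\,\nu(\diff x)$. Inserting the first identity and applying Tonelli turns the integral into $-\beta\int_0^\infty\ee^{-\beta y}\,\nu((y,\infty))\,\diff y$. Since $\nu((y,\infty))$ and $\bar\nu(y)=\nu([y,\infty))$ agree for all but the countably many atoms of $\nu$, and since $\int_0^1\bar\nu(y)\,\diff y=\int_{(0,\infty)}(x\wedge1)\,\nu(\diff x)<\infty$ (again by Tonelli) shows $\bar\nu\in L^1_\loc(\real_+)$, this equals $-\beta\cL\bar\nu(\beta)$.

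For part~\eqref{lem-ExpRep-ii}: writing $\ee^{-\beta x}-1+\beta x\bone_{(0,1)}(x)=(\ee^{-\beta x}-1+\beta x)-\beta x\bone_{[1,\infty)}(x)$, the hypothesis $\int_{[1,\infty)}y\,\nu(\diff y)<\infty$ makes the second term $\nu$-integrable while $\ee^{-\beta x}-1+\beta x$ is $\nu$-integrable since it is $O(x^2)$ near $0$ and $O(x)$ near $\infty$; hence $\psi(\beta)=c''\beta+\sigma^2\beta^2+\int_{(0,\infty)}(\ee^{-\beta x}-1+\beta x)\,\nu(\diff x)$. Inserting the second identity, applying Tonelli, and computing the inner integral $\int_{(y,\infty)}(x-y)\,\nu(\diff x)=\int_y^\infty\nu((u,\infty))\,\diff u=\bar{\bar\nu}(y)$ (once more up to atoms) gives $\beta^2\cL\bar{\bar\nu}(\beta)$; here $\bar{\bar\nu}\in L^1_\loc(\real_+)$ because $\int_0^1\bar{\bar\nu}(y)\,\diff y=\int_0^1 u\,\bar\nu(u)\,\diff u+\int_1^\infty\bar\nu(u)\,\diff u$ with $\int_0^1 u\,\bar\nu(u)\,\diff u=\tfrac12\int_{(0,\infty)}(x\wedge1)^2\,\nu(\diff x)<\infty$ and $\int_1^\infty\bar\nu(u)\,\diff u\le\int_{[1,\infty)}x\,\nu(\diff x)<\infty$. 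Finally, $c''=\EE L_1$ is the classical relation $\psi'(0+)=\EE L_1$ (valid once the first moment is finite): one may check it directly from the formula just obtained --- $\psi(\beta)/\beta=c''+\sigma^2\beta+\beta\cL\bar{\bar\nu}(\beta)$ and $\beta\cL\bar{\bar\nu}(\beta)\to0$ as $\beta\downarrow0$ by dominated convergence --- or read it off the L\'evy--It\^o decomposition, $c''$ being the drift left after the compensator $\int_{[1,\infty)}x\,\nu(\diff x)$ of the large jumps is subtracted.

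There is no genuine obstacle here; the statements amount to bookkeeping on the L\'evy--Khintchine formula. The only points demanding a little care are the two Tonelli interchanges and the local integrability of $\bar\nu$ and $\bar{\bar\nu}$, all of which come down to the defining integrability $\int_{(0,\infty)}(1\wedge x^2)\,\nu(\diff x)<\infty$ of a L\'evy measure together with the extra moment assumption; the distinction between $\nu((y,\infty))$ and $\nu([y,\infty))$ is harmless, being confined to a countable set of $y$.
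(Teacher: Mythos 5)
Your proposal is correct and follows essentially the same route as the paper: absorb the $\nu$-integrable linear part of the compensator into the drift, rewrite $\ee^{-\beta x}-1$ (resp.\ $\ee^{-\beta x}-1+\beta x$) as an iterated exponential integral, and apply Tonelli to produce $-\beta\cL\bar\nu(\beta)$ (resp.\ $\beta^2\cL\bar{\bar\nu}(\beta)$). The extra care you take with local integrability of $\bar\nu$, $\bar{\bar\nu}$, the atoms of $\nu$, and the identity $c''=\EE L_1$ is sound but goes beyond what the paper spells out.
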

\begin{proof}
To see~\ref{lem-ExpRep-i} set $c'=c+\int_{(0,1)} y\,\nu(\diff y)$, and compute
\begin{align*}
    \psi(\beta)
    &=c'\beta+\sigma^2\beta^2 +\int_{(0,\infty)} (\ee^{-\beta x}-1)\nu(\diff x) \\
    &= c'\beta+\sigma^2\beta^2 -\beta\int_{ (0,\infty)} \int_{ [0,x]} \ee^{-\beta y}\,\diff y~ \nu(\diff x)\\
	&=c'\beta+\sigma^2\beta^2 -\beta\int_{ [0,\infty)}\ee^{-\beta y}\int_{[y,\infty)}\nu(\diff x) ~\diff y \\
    &=c'\beta+\sigma^2\beta^2 -\beta\cL\bar\nu(\beta).
\end{align*}
Further, \ref{lem-ExpRep-ii} follows in a similar way if we set $c''=c-\int_{[1,\infty)} y\,\nu(\diff y)$ and use
\begin{align*}
    \psi(\beta)
    &=c'' \beta + \sigma^2 \beta^2 +\int_{ (0,\infty)}\left( \ee^{-\beta x} -1+\beta x\right) \nu(\diff x)\\
	&=c''\beta+\sigma^2\beta^2 -\beta\int_{ (0,\infty)} \int_{[0,x]}( \ee^{-\beta y}-1)\,\diff y~ \nu(\diff x)\\
	&=c''\beta+\sigma^2\beta^2 +\beta^2\int_{ (0,\infty)} \int_{[0,x]}\int_{[0,y]} \ee^{-\beta z} \,\diff z~\diff y~ \nu(\diff x)\\
	&=c''\beta+\sigma^2\beta^2 -\beta^2\int_{ [0,\infty)} \ee^{-\beta z} \int_{ [z,\infty)}\int_{[y,\infty)} \nu(\diff x)~\diff y~\diff z\\
	&=c''\beta+\sigma^2\beta^2+\beta^2\cL\bar{\bar\nu}(\beta).
\end{align*}
This completes the proof.
\end{proof}	

From now on we will always use the constant $c\in\real$ to denote the location parameter if $\psi$ is in the form \eqref{eq-LaplaceExponent}, and $c',c''\in\real$ if we write $\psi$ as in Lemma~\ref{lem-ExpRep} \ref{lem-ExpRep-i} and~\ref{lem-ExpRep-ii}.

Note that, whenever $\int_{[1,\infty)}y\nu(\diff y)<\infty$, the function $\bar{\bar\nu}$ is well-defined, and integration by parts implies that in this case
\begin{gather*}
    \int_{(0,1)}y\nu(\diff y)=\infty \quad
    \Longleftrightarrow \quad \bar{\bar\nu}(0+)=\infty.
\end{gather*}

Further background information on (spectrally negative) L\'evy processes can be found in \cite{berto98, sato13}.

\subsection{Scale functions}\label{sub-se-23}

The $q$-scale function of a spectrally negative L\'evy process $(L_t)_{t\geq 0}$ is uniquely determined via its Laplace transform given in \eqref{def-scalefunction}.
If $\widetilde\nu \equiv 0$, i.e.\ if $(L_t)_{t\geq 0}$ is a Brownian motion with drift, the Laplace transform in \eqref{def-scalefunction} can be inverted explicitly, cf.\ \cite{kuzne13}, leading to the well-known formula
\begin{gather}\label{eq-scaleBM}
	W^{(q)}(x)
	= \frac{1}{\sqrt{c^2 +2q\sigma^2}} \left(\ee^{\left(\sqrt{c^2 +2q\sigma^2} - c\right){x}/{\sigma^2}} - \ee^{-\left(\sqrt{c^2 +2q\sigma^2} + c\right){x}/{\sigma^2}} \right), \quad x\geq 0.
\end{gather}

We will, therefore, restrict ourselves to processes with $\widetilde\nu \not\equiv 0$. Moreover, we will exclude subordinators, i.e.\  monotone L\'evy processes. In particular, if $\sigma^2=0$ and $\int_{(0,1)} x\,\nu(\diff x)<\infty$, we assume  $c\geq \int_{(0,1)} x\,\nu(\diff x)$.

  For a spectrally negative L\'{e}vy process \(L=(L_t)_{t\geq0}\) with Laplace exponent \(\psi\) and characteristic triplet \((c,2\sigma^2,\widetilde\nu)\) it is well-known (cf. \cite[Section~3.3]{kuzne13}, \cite{chan11}, \cite[Chapter 8.2]{kypri14}) that under the exponential change of measure
\begin{align*}
\frac{\diff \PP_x^{\Phi(q)}}{\diff \PP_x}\Bigg|_{\cF_t}=\ee^{\Phi(q)(L_t-x)-qt}, \quad q\geq0,
\end{align*}
the process \((L,\PP^{\Phi(q)})\) is again a spectrally negative L\'{e}vy process whose Laplace exponent is given by \(\psi_{\Phi(q)}(\beta)=\psi(\beta+\Phi(q))-q\) and whose L\'{e}vy measure is \(\widetilde\nu_{\Phi(q)}(\diff x)=\ee^{\Phi(q)x}\widetilde\nu(\diff x)\). The shape of \(\psi_{\Phi(q)}\), together with \eqref{def-scalefunction} yields \(W^{(q)}(x)=\ee^{\Phi(q)x}W_{\Phi(q)}(x)\), where \(W_{\Phi(q)}\) denotes the \(0\)-scale function of \((L,\PP^{\Phi(q)})\). This relation often allows us to assume without loss of generality that \(q=0\) and we will use this observation below in Section \ref{sub-se-41} for the proof of Theorem \ref{thm-smoothness}.\\
  The surveys \cite{avram20} and \cite{kuzne13} contain a thorough introduction to scale functions and some of their applications.

\section{Series expansions of $q$-scale functions} \label{S4}


As before, $(L_t)_{t\geq 0}$ is a spectrally negative L\'evy process with Laplace exponent $\psi$ and L\'evy triplet $(c, 2\sigma^2,\widetilde\nu)$;  $\nu(B):=\widetilde\nu(-B)$, we set $\bar\nu(x) = \nu([x,\infty))$ and $\bar{\bar\nu}(x) = \int_{[x,\infty)} \bar\nu(y)\,\diff y$; $c'$ and $c''$ are the constants from Lemma~\ref{lem-ExpRep}.

We will extend the approach of \cite{landr20} to derive series expansions for the $q$-scale function of $(L_t)_{t\geq 0}$, that will be presented in Theorems \ref{thm-brownian}, \ref{thm-bounded}, and \ref{thm-unbdd} below. These three theorems cover the following three different situations:

\begin{enumerate}
\item $\sigma^2>0$ (Section~\ref{S4a}, Theorem~\ref{thm-brownian}): In this case, our result extends \cite[Thm.~2.4]{landr20} from finite to arbitrary jump measures.

\item $\sigma^2=0$ and $\int_{(0,1)} y\,\nu(\diff y)<\infty$ (Section~\ref{S4b}, Theorem~\ref{thm-bounded}): We recover results which have been earlier obtained  in \cite{doeri11} and, in special cases, also in \cite{chan11,erlan09,landr20}.

\item $\sigma^2=0$ and $\int_{(0,1)} y\,\nu(\diff y) = \infty$ (Section~\ref{S4c}, Theorem~\ref{thm-unbdd}): Our results in this situation seem to be new.
\end{enumerate}

The proofs of all three theorems use the same strategy: find a suitable expansion of the Laplace transform, which can be inverted term-by-term.

\subsection{L\'evy processes with a Gaussian component}\label{S4a}

We start by presenting a series expansion for scale functions of spectrally negative L\'evy processes that have a Gaussian component.   Results of this kind have a long history. For example, if we combine in \cite{chan11} the proof of Theorem 1 with Corollary 9, we arrive at a series representation for $\partial W^{(0)}$, and this leads to an (implicit) expansion for $\partial W^{(q)}$ using the relations between $W^{(0)}$ and $W^{(q)}$. Also note that, setting $\nu\equiv 0$ in Theorem \ref{thm-brownian} below, we recover \eqref{eq-scaleBM}.   At this point we note that the moment condition $\int_{[1,\infty)} y\,\nu(\diff y)<\infty$ appearing in Theorem \ref{thm-brownian} is for notational convenience only, and can be discarded at the expense of heavier notation, see also Remark \ref{rem-addcond} below.

\begin {theorem}\label{thm-brownian}
    Let $(L_t)_{t\geq 0}$ be a spectrally negative L\'{e}vy process with triplet $(c, 2 \sigma^2,\widetilde\nu)$ such that $\sigma^2>0$ and $\int_{[1,\infty)} y\,\nu(\diff y)<\infty$. For $q\geq 0$ set
    \begin{gather*}f(x) :=  -c''+ qx - \bar{\bar\nu}(x) = \textstyle\int_{[1,\infty)} y\,\nu(\diff y) - c + qx - \bar{\bar\nu}(x), \quad x\geq0,
    \end{gather*}
    then the $q$-scale function of $(L_t)_{t\geq 0}$ is  given by
	\begin{align}\label{case-brownian}
		W^{(q)} = \id *\sum_{n=0}^\infty \frac{f^{*n}}{(\sigma^2)^{n+1}},
	\end{align}
	and the series on the right-hand side converges uniformly on compact subsets of $\real_+$ to a limit in $L^1_{\loc}(\real_+)$.
\end {theorem}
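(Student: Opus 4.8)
The plan is to verify the identity at the level of Laplace transforms and then justify the term-by-term inversion. First I would take Laplace transforms on both sides of \eqref{case-brownian}. By Lemma~\ref{lem-ExpRep}\ref{lem-ExpRep-ii} we have $\psi(\beta) = c''\beta + \sigma^2\beta^2 + \beta^2\cL\bar{\bar\nu}(\beta)$, so
\begin{align*}
    \psi(\beta) - q
    = \sigma^2\beta^2 + \beta^2\left(\frac{c''}{\beta} - \frac{q}{\beta^2} + \cL\bar{\bar\nu}(\beta)\right)
    = \sigma^2\beta^2 - \beta^2\,\cL f(\beta),
\end{align*}
where I use $\cL[\id](\beta) = \beta^{-2}$ and $\cL[\bone](\beta) = \beta^{-1}$ together with the definition $f = -c'' + q\id - \bar{\bar\nu}$. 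Hence
\begin{align*}
    \frac{1}{\psi(\beta) - q}
    = \frac{1}{\beta^2}\cdot\frac{1}{\sigma^2 - \cL f(\beta)}
    = \cL[\id](\beta)\cdot\frac{1}{\sigma^2}\sum_{n=0}^\infty \frac{(\cL f(\beta))^n}{(\sigma^2)^n},
\end{align*}
valid on the set where $|\cL f(\beta)| < \sigma^2$, which by dominated convergence holds for all sufficiently large $\beta$ since $\cL f(\beta)\to 0$ as $\beta\to\infty$ (here I would check that $f\in L^1_{\loc}$, i.e.\ that $\bar{\bar\nu}$ is locally integrable, which follows from $\int_{[1,\infty)} y\,\nu(\diff y)<\infty$ and the standard estimate $\int_0^1\bar\nu(y)\,\diff y \le \int_{(0,1)}y\,\nu(\diff y) + \bar\nu(1) < \infty$ combined with $\bar{\bar\nu}$ being nonincreasing). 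Using \eqref{eq-lapl_convo} to identify $(\cL f)^n = \cL[f^{*n}]$ and then multiplying by $\cL[\id]$, the right-hand side equals $\cL\big[\id * \sum_{n} f^{*n}/(\sigma^2)^{n+1}\big](\beta)$, provided the interchange of $\cL$ and the infinite sum is justified. By uniqueness of Laplace transforms (both sides are Laplace transforms of functions in $L^1_{\loc}(\real_+)$ vanishing on $(-\infty,0)$) this gives \eqref{case-brownian}.

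The second task is the convergence claim, which is also what legitimises the term-by-term manipulation above. Fix a compact interval $[0,T]$. By \eqref{eq-Young} (Young's inequality for convolution on $[0,x]$) one has $\|f^{*n}\|_{L^1([0,T])} \le \|f\|_{L^1([0,T])}^n$, so $\sum_n \|f^{*n}\|_{L^1([0,T])}/(\sigma^2)^{n+1}$ converges as a geometric series once $T$ is small enough that $\|f\|_{L^1([0,T])} < \sigma^2$; for general $T$ one upgrades this using a standard Volterra-type argument (for instance, the sharper bound $\|f^{*n}\|_{L^1([0,T])} \le \|f\|_\infty^{n-1}\|f\|_{L^1([0,T])} T^{n-1}/(n-1)!$ when $f$ is bounded, or an iteration over subintervals) to get summability in $L^1([0,T])$ for every $T$. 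This shows the series converges in $L^1_{\loc}(\real_+)$. For \emph{uniform} convergence on compacts one then convolves with $\id$: since $\id$ is bounded on $[0,T]$, $\|\id * h\|_{L^\infty([0,T])} \le T\,\|h\|_{L^1([0,T])}$, so the partial sums of $\id * \sum_n f^{*n}/(\sigma^2)^{n+1}$ form a uniformly Cauchy sequence of continuous functions on $[0,T]$, hence converge uniformly; this also confirms that the limit is continuous and agrees pointwise with the $L^1_{\loc}$-limit of \eqref{case-brownian}.

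I expect the main obstacle to be the convergence estimate for $\|f^{*n}\|_{L^1([0,T])}$ on a \emph{fixed but arbitrary} compact set rather than just a small one: the naive geometric bound from \eqref{eq-Young} only gives convergence for $T < \sigma^2/\|f\|_{L^1([0,T])}$, and one needs the Volterra-kernel improvement (a factorial in the denominator) to cover all $T$. The Laplace-transform identity itself is routine once the representation of $\psi$ from Lemma~\ref{lem-ExpRep}\ref{lem-ExpRep-ii} is in hand, and the only subtlety there is confirming that the geometric expansion \eqref{eq-geo_series} is valid on a right-neighbourhood of infinity and that term-by-term inversion is permitted — both of which are handled by the same $L^1_{\loc}$ summability. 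A minor point to be careful about is the treatment of the constant term in $f$ (the $-c''$ part), which contributes a multiple of $\bone$ and whose convolution powers are the monomials $(-c'')^k x^{k-1}/(k-1)!$; these are harmless but should be accounted for when bounding $\|f^{*n}\|$.
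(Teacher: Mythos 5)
Your overall strategy is the same as the paper's: rewrite $\psi$ via Lemma~\ref{lem-ExpRep}\,\ref{lem-ExpRep-ii}, expand $1/(\psi(\beta)-q)$ as a geometric series in $\cL f(\beta)/\sigma^2$ for large $\beta$, invert term by term, and then establish convergence of the convolution series. The one step that, as written, would fail is your convergence estimate for $\|f^{*n}\|_{L^1([0,T])}$ on an arbitrary compact set. Your explicit fallback is the factorial bound $\|f^{*n}\|_{L^1([0,T])}\le\|f\|_\infty^{n-1}\|f\|_{L^1([0,T])}T^{n-1}/(n-1)!$ ``when $f$ is bounded'' --- but the theorem makes no assumption $\int_{(0,1)}y\,\nu(\diff y)<\infty$, and when that integral is infinite one has $\bar{\bar\nu}(0+)=\infty$, so $f$ is unbounded near the origin. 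This is exactly the new case the theorem is meant to cover (it extends the finite-L\'evy-measure result of Landriault--Willmot to arbitrary jump measures), so the boundedness-based bound is unavailable precisely where it matters. Your alternative, ``an iteration over subintervals,'' can be made to work (split $f=f\bone_{[0,\delta]}+f\bone_{(\delta,T]}$, note that sufficiently high convolution powers of the second piece vanish on $[0,T]$, and use that $\|f\bone_{[0,\delta]}\|_{L^1}\to0$ as $\delta\to0$), but you do not carry it out. The paper's route is Lemma~\ref{lem-ConPowDecay}: truncate $f$ to $[0,z]$, bound $\|f^{*n}\|_{L^1([0,x])}\le \ee^{\beta x}\left(\cL[|f|](\beta)\right)^n$, and choose $\beta$ so large that $\cL[|f|](\beta)<\tfrac12\epsilon$; this needs nothing beyond $f\in L^1_{\loc}(\real_+)$ and yields the uniform decay $\|f^{*n}\|_{L^1([0,x])}/\epsilon^n\to0$ on $[0,z]$ for every $\epsilon>0$.

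A second, smaller imprecision: you assert that the interchange of $\cL$ with the infinite sum is ``handled by the same $L^1_{\loc}$ summability,'' but convergence in $L^1([0,T])$ for every $T$ does not by itself control the Laplace integral over all of $[0,\infty)$. The correct justification --- which the paper gives and which you essentially have in hand --- is the domination $\bigl|\cL\bigl(\sum_{n>m}g^{*n}\bigr)\bigr|\le\sum_{n>m}\bigl(\cL[|g|](\beta)\bigr)^n$ with $g=\sigma^{-2}f$ and $\cL[|g|](\beta)<1$ for large $\beta$; that is, the series converges in the weighted space $L^1(\ee^{-\beta y}\,\diff y)$, which legitimises the term-by-term inversion. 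With these two repairs your argument coincides with the paper's.
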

\begin{proof}
    By Lemma~\ref{lem-ExpRep} \ref{lem-ExpRep-ii} the Laplace exponent of $L$ is given by $\psi(\beta)=c''\beta+\sigma^2\beta^2+\beta^2\cL\bar{\bar\nu}(\beta)$ with $c''=c-\int_{[1,\infty)} y\,\nu(\diff y)$. From the definition of the scale function we thus get for all $q\geq 0$
	\begin{equation}\label{eq-proof1}\begin{aligned}
			\cL[W^{(q)}](\beta)
			&=\frac1{\beta^2\left(c''\beta^{-1} + \sigma^2 + \cL\bar{\bar\nu}(\beta)-q\beta^{-2}\right)}. \nonumber
	\end{aligned}\end{equation}
    For $\beta>1$ large enough we have $\sigma^2 > |c''\beta^{-1} + \cL\bar{\bar\nu}(\beta) -q\beta^{-2}|$, and so we may expand the fraction appearing on the right hand side into a geometric series:
	\begin{align}\label{eq-proof2}
        \cL[W^{(q)}](\beta)
        &=\frac1{\beta^2} \sum_{n=0}^\infty \frac{\left(-c''\beta^{-1} - \cL\bar{\bar\nu}(\beta) + q\beta^{-2}\right)^n}{(\sigma^2)^{n+1}}
        =\frac1{\beta^2}\sum_{n=0}^\infty \frac{\left(\cL f(\beta)\right)^n}{(\sigma^2)^{n+1}}
	\end{align}
	with $f(x) = -c'' - \bar{\bar\nu}(x)+ qx $ as in Theorem~\ref{thm-brownian}, and for sufficiently large $\beta > 1$.
	
    Note that it is enough to know the (existence of the) Laplace transform for large values of $\beta\in[\Phi(q),\infty)$ in order to characterize $W^{(q)}$, see e.g.\ \cite[Thm.~6.3]{widde15}. Thus, to prove the representation for $W^{(q)}$, it remains to show that the summation and the Laplace transform in \eqref{eq-proof2} can be interchanged. To do so, let us write for a moment $g(x) := \sigma^{-2}f(x)$. Then for every $m\in\nat$
	\begin{align*}
		\left| \cL\left(\sum_{n=0}^\infty g^{*n}\right)-\sum_{n=0}^m \cL\left(g^{*n}\right)\right|
		&= \left| \cL\left(\sum_{n=0}^\infty g^{*n}\right)-\cL\left(\sum_{n=0}^m g^{*n}\right)  \right|\\
		&\leq \cL\left(\sum_{n=m+1}^\infty |g|^{*n}\right)
		= \sum_{n=m+1}^\infty \cL[|g|]^n.
	\end{align*}
    For sufficiently large arguments $\beta > 1$ we know that $\cL[|g|](\beta)<1$, and so the series on the right-hand side converges, which allows us to interchange the summation and the Laplace transform. Moreover, the above calculation shows that $\sum_{n=1}^\infty g^{*n}$ converges in the weighted space $L^1 (\ee^{-\beta y}\,\diff y)$, and so it is well-defined in, say, $L^1_{\loc}(\real_+)$.
    Further, from H\"{o}lder's inequality it follows that
	\begin{align*}
		\left|\left (\id * \sum_{n=1}^\infty g^{*n}\right)(x)\right| \leq
		x\left\|\sum_{n=1}^\infty g^{*n}\right\|_{L^1[0,x]}, \quad x\geq0.
	\end{align*}	
    By Lemma \ref{lem-ConPowDecay}, the series converges uniformly on compact subsets of $\real_+$. Finally, since $\cL[\id](\beta) = \beta^{-2}$ and $\id \ast g^{\ast 0} = \id \ast \delta_0=\id$, we get
	\begin{gather*}
	W^{(q)} = \id * \sum_{n=0}^\infty \frac{f^{*n}}{(\sigma^2)^{n+1}},
	\end{gather*}
	as stated.
\end{proof}

\begin{remark}\label{rem-addcond}
    The condition $\int_{[1,\infty)} y\,\nu(\diff y)<\infty$ ensures that the integrated tail $\bar{\bar\nu}$ is finite. We can obtain similar results without this condition, if we split the L\'{e}vy measure into $\nu^{(0)}=\nu\vert_{[0,z)}$ and $\nu^{(\infty)}=\nu\vert_{[z,\infty)}$ for some fixed $z\geq 1$, and replace $\bar{\bar\nu}$ by $\bar{\bar\nu}^{(0)}(x)=\int_{[x,z)} \bar\nu(y)\,\diff y$.
    The Laplace exponent $\psi$ is then given by
	\begin{gather*}
		\psi(\beta)
		= c''_{z}\beta + \sigma^2\beta^2 + \beta^2\cL[\bar{\bar\nu}^{(0)}](\beta) + \cL[\nu^{(\infty)}](\beta) - \|\nu^{(\infty)}\|,
	\end{gather*}
	where $c''_{z} := c - \int_{[1,\infty)} x\,\nu^{(0)}(\diff x)$ and $\|\nu^{(\infty)}\| = \nu^{(\infty)}\left([0,\infty)\right)$ is the total mass. Thus, with the same argument as in the proof of Theorem~\ref{thm-brownian},
	\begin{align}
		W^{(q)} = \id *\sum_{n=0}^\infty \frac{f^{*n}}{(\sigma^2)^{n+1}},
	\end{align}
    with $f$ such that
    \begin{gather*}
        \cL f(\beta)= -c_z'' \beta^{-1} - \cL[\bar{\bar\nu}^{(0)}](\beta) - \beta^{-2} \cL[\nu^{(\infty)}](\beta) + \beta^{-2}(q+\|\nu^{(\infty)}\|),
    \end{gather*}
    i.e.\ with $f$ given by
    \begin{gather*}
        f(x)= -c_z'' - \bar{\bar\nu}^{(0)}(x) - \int_{[z,x]} ( x-y )\nu^{(\infty)}(\diff y) + (q+\|\nu^{(\infty)}\|)x, \quad x\geq0.
    \end{gather*}
Note that the integral term in $f$ vanishes on $[0,z)$; therefore, it can be omitted when evaluating $W^{(q)}$ on $[0,z)$.
The above procedure thus yields a series expansion of the same form as \eqref{case-brownian} for  bounded subsets of $\real_+$, if we pick $z\geq 0$ sufficiently large and adjust $c''$ and $\bar{\bar\nu}^{(0)}$ accordingly.
\end{remark}

\begin{remark}\label{rem-reform}
     A different construction of the series expansions yields further useful formulae for the $q$-scale function. Consider the setting of Theorem~\ref{thm-brownian}. Instead of \eqref{eq-proof1} we can use
	\begin{align*}
		\cL[W^{(q)}](\beta)
		= \frac1{c''\beta + \sigma^2\beta^2 + \beta^2\cL\bar{\bar\nu}(\beta)-q}
		= \sum_{n=0}^\infty \frac{(-1)^n(\beta^2\cL\bar{\bar\nu}(\beta))^n}{(c''\beta + \sigma^2\beta^2 - q)^{n+1}}.
	\end{align*}
    Denote by $a_{1,2}=-\frac{c''}{2\sigma^2}\pm\sqrt{(\frac{c''}{2\sigma^2})^2+\frac{q}{\sigma^2}}$ the roots of the polynomial in the denominator. This allows us to invert the Laplace transform, leading to
	\begin{align*}
		W^{(q)}
		=\sum_{n=0}^\infty \frac{(-1)^n}{(\sigma^2)^{n+1}} \cL^{-1}\left(\frac{\beta^n}{(\beta+a_1)^{n+1}}\right) * \cL^{-1}\left(\frac{\beta^n}{(\beta+a_2)^{n+1}}\right) * \bar{\bar\nu}^{*n},
	\end{align*}
	with the inverse Laplace transforms of the rational functions given by
	\begin{align*}
		\cL^{-1}\left(\frac{\beta^n}{(\beta+a)^{n+1}}\right)=\frac1{n!}\partial^n\left(x^n\ee^{-ax}\right)
		\quad\text{for all\ \ } n\in\nat,\; a\in\real,
	\end{align*}
cf.\ \cite{widde15}. If, moreover, the L\'{e}vy measure $\nu$ has finite total mass $\|\nu\|<\infty$, we get
	\begin{gather*}
		\cL[W^{(q)}](\beta)
		= \frac1{c'\beta+\sigma^2\beta^2+\cL\nu(\beta)-(q+\|\nu\|)}
		= \sum_{n=0}^\infty \frac{(-1)^n(\cL\nu(\beta))^n}{(c'\beta+\sigma^2\beta^2-(q+\|\nu\|))^{n+1}}
	\end{gather*}
	with $c'=c+\int_{(0,1)} x\,\nu(\diff x)$. Thus, we have
	\begin{gather}\label{eq-landriault}
		W^{(q)}
		= \sum_{n=0}^\infty \frac{(-1)^n}{(\sigma^2)^{n+1}}\left(\frac{x^{n}}{n!}\ee^{-a_3 x}\right) *
		\left(\frac{x^{n}}{n!}\ee^{-a_4 x}\right)*\nu^{*n}
	\end{gather}
	with $a_{3,4}=-\frac{c'}{2\sigma^2}\pm\sqrt{(\frac{c'}{2\sigma^2})^2+\frac{q+\|\nu\|}{\sigma^2}}$. The convolution
	$
		\left(\frac{1}{n!}x^{n}\ee^{-a_3 x}\right)*\left(\frac{1}{n!}x^{n}\ee^{-a_4 x}\right)
	$	
can be explicitly worked out. Rather than stating the lengthy expression, we refer to \cite{landr20} where the case of finite L\'{e}vy measures is thoroughly studied for both $\sigma^2>0$ and $\sigma^2=0$.
\end{remark}

We close this section with two explicit examples in the spirit of the previous remark. Suppose that the L\'{e}vy measure $\nu$ is bounded away from zero, i.e.\ there exists some $\epsilon>0$ such that $\supp(\nu) \cap [0,\epsilon) = \emptyset$; in particular, $\nu$ is a finite measure. In this case, the series expansion in \eqref{eq-landriault} becomes a closed-form expression since $\supp(\nu^{*n})\cap[0,n\epsilon) = \emptyset$ for any $n\in\nat$. As usual, we use $\lfloor x \rfloor := \max \{y\in\integer\::\: y\leq x \}$ to denote the largest integer below $x\in\real$.

\begin{example}[Geometrically distributed jumps]
	Let $\nu= \sum_{k=1}^\infty (1-p)^{k-1}p\, \delta_{k}$ be a geometric distribution with success probability $p\in (0,1)$. It is well known that the $n$-fold convolution $\nu^{*n}$ is a negative binomial distribution, i.e.
	\begin{align*}
		\nu^{\ast n}=\sum_{k=n}^\infty \binom{k-1}{k-n} (1-p)^{k-n}p^k\, \delta_k, \quad n\in \nat.
	\end{align*}
	Hence for $q\geq 0$ the $q$-scale function of the spectrally negative L\'{e}vy process with Laplace exponent
	\begin{gather*}
		\psi(\beta)=c'\beta+\sigma^2\beta^2+\int_{ (0,\infty)} \left(\ee^{-\beta x}-1\right) \nu(\diff x)
	\end{gather*}
	and $\sigma^2>0$, is given by
	\begin{align*}
		W^{(q)}(x)
		= \sum_{n=0}^{\lfloor x\rfloor}\sum_{k=n}^{\lfloor x\rfloor} (-1)^n
            \frac{ \binom{k-1}{k-n} (1-p)^{k-n}p^k}{(\sigma^2)^{n+1}}\left(\frac{x^{n}}{n!}\ee^{-a_3 x}\right)*\left(\frac{x^{n}}{n!}\ee^{-a_4 x}\right)(x-k),\; x\geq 0,
	\end{align*}
	with $a_3,a_4$ as in Remark~\ref{rem-reform}.
\end{example}

\begin{example}[Zero-truncated Poisson distributed jumps]
	Let $\nu$ denote the distribution of a Poisson random variable $\xi$ with parameter $\mu>0$.  In order to determine the $q$-scale functions using Remark~\ref{rem-reform}, we need to know the probability mass functions $f_n$ of $\nu^{*n}$, $n\in\nat$. For $k\neq 0$ they are given by
	\begin{align}\label{ztp}
		f_n(k)
		= \frac{\PP\left(\sum_{i=1}^n \xi_i = k \:\&\: \forall i \leq n : \xi_i>0 \right)}{\PP\left(\forall i \leq n : \xi_i >0\right)}
		=: \frac{z(k,n)}{(1-\ee^{-\mu})^n} , \quad k\in\nat\setminus\{0\},
	\end{align}
	where $\xi_i$, $i\in\nat\setminus\{0\}$ are i.i.d.\ copies of $\xi$. For $n=0$ we have, by definition, $\nu^{*0}=\delta_0$. The numerator $z(k,n)$ in \eqref{ztp} can be computed recursively: obviously, we have $z(k,n)=0$, if $k<n\in\nat\setminus\{0\}$, and $z(k,1)=\frac{\mu^s}{s!}\ee^{-\mu}$, if $k\in\nat\setminus\{0\}$. Otherwise, we have
	\begin{align*}
		z(k,n)
		&=\PP\left(\sum_{i=1}^n  \xi_i = k\right) - \sum_{\ell=1}^{n-k-1} \binom{n}{\ell} \PP\left(\sum_{i=1}^{\ell} \xi_i =0\right)z(k,n-\ell)\\
		&= \frac{(n\mu)^k}{k!}\ee^{-n\mu} - \sum_{\ell=1}^{n-k-1} \binom{n}{\ell} \ee^{-\ell\mu}z(k,n-\ell).
	\end{align*}
	Setting $z(0,0)=1$ and $z(k,0)=0$, $k\in\nat\setminus\{0\}$, the  $q$-scale function of the spectrally negative L\'{e}vy process with Laplace exponent
	\begin{gather*}
		\psi(\beta)=c'\beta+\sigma^2\beta^2+\int_{(0,\infty)} \left(\ee^{-\beta x}-1\right) \nu(\diff x)
	\end{gather*}
	and $\sigma^2>0$, is given by
	\begin{gather*}
		W^{(q)}(x)
		= \frac{1}{c} \sum_{n=0}^{\lfloor x \rfloor} \sum_{k=n}^{\lfloor x \rfloor}\frac{(-1)^n z(n,k)}{(\sigma^2)^{n+1}\left(1-\ee^{-\mu}\right)^k}\left(\frac{x^{n}}{n!}\ee^{-a_3 x}\right)*\left(\frac{x^{n}}{n!}\ee^{-a_4 x}\right)(x-k),\qquad x\geq 0,
	\end{gather*}
	with $a_3,a_4$ as in Remark~\ref{rem-reform}.
\end{example}

\subsection{L\'evy processes with paths of bounded variation}\label{S4b}

If $(L_t)_{t\geq 0}$ has paths of bounded variation, the following series expansions for the $q$-scale functions hold true.

\begin {theorem}\label{thm-bounded}
    Let $(L_t)_{t\geq 0}$ be a spectrally negative L\'{e}vy process with triplet $(c,0,\widetilde\nu )$ such that $\int_{(0,1)} x\,\nu(\diff x)<\infty$. Recall $c'=c+\int_{(0,1)} y\,\nu(\diff y)$. For $q\geq 0$ set
    \begin{gather*}
    f(x) :=  q + \bar\nu(x), \quad x\geq0,
    \end{gather*}
     then the $q$-scale function of $(L_t)_{t\geq 0}$ is  given by
	\begin{align}\label{case-bounded}
		W^{(q)}=\bone*\sum_{n=0}^\infty\frac{ f^{*n}}{\left(c'\right)^{n+1}},
	\end{align}
	and the series on the right-hand side converges uniformly on compact subsets of $\real_+$.
\end {theorem}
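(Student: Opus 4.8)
The plan is to mimic the proof of Theorem~\ref{thm-brownian}, but starting from the bounded-variation representation of $\psi$ in Lemma~\ref{lem-ExpRep}~\ref{lem-ExpRep-i}. First I would write, for $\beta > \Phi(q)$,
\[
\cL[W^{(q)}](\beta) = \frac{1}{\psi(\beta)-q} = \frac{1}{c'\beta - \beta\cL\bar\nu(\beta) - q} = \frac{1}{\beta}\cdot\frac{1}{c' - \cL\bar\nu(\beta) - q\beta^{-1}}.
\]
Since $\cL\bar\nu(\beta)\to 0$ and $q\beta^{-1}\to 0$ as $\beta\to\infty$, for $\beta$ large enough we have $|{-\cL\bar\nu(\beta) - q\beta^{-1}}| < c'$ (note $c'>0$ because the process is not a subordinator), so the fraction expands as a geometric series:
\[
\cL[W^{(q)}](\beta) = \frac{1}{\beta}\sum_{n=0}^\infty \frac{\bigl(\cL\bar\nu(\beta) + q\beta^{-1}\bigr)^n}{(c')^{n+1}} = \frac{1}{\beta}\sum_{n=0}^\infty \frac{(\cL f(\beta))^n}{(c')^{n+1}},
\]
where $\cL f(\beta) = \cL\bar\nu(\beta) + q\beta^{-1} = \cL[\bar\nu + q\bone](\beta)$, i.e.\ $f(x) = q + \bar\nu(x)$ as claimed; here one uses $\cL\bone(\beta)=\beta^{-1}$ and $\cL[\id](\beta)$ is not needed since the prefactor is $\beta^{-1}=\cL\bone(\beta)$.

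Next I would justify interchanging the sum and the Laplace transform exactly as in Theorem~\ref{thm-brownian}: writing $g := (c')^{-1} f = (c')^{-1}(q+\bar\nu) \in L^1_{\loc}(\real_+)$ (note $\bar\nu\in L^1_{\loc}$ precisely because $\int_{(0,1)} x\,\nu(\diff x)<\infty$, by Fubini), one has for every $m$
\[
\left| \cL\Bigl(\textstyle\sum_{n=0}^\infty g^{*n}\Bigr) - \sum_{n=0}^m \cL(g^{*n})\right| \le \sum_{n=m+1}^\infty \cL[|g|](\beta)^n,
\]
using $\cL[g^{*n}] = (\cL g)^n$ from \eqref{eq-lapl_convo} and $|\cL[g^{*n}]| \le \cL[|g|^{*n}] = \cL[|g|]^n$. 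For $\beta$ large, $\cL[|g|](\beta) = \cL g(\beta) < 1$, so the tail vanishes; this also shows $\sum_{n} g^{*n}$ converges in $L^1(\ee^{-\beta y}\,\diff y)$, hence defines an element of $L^1_{\loc}(\real_+)$. Then
\[
\cL[W^{(q)}](\beta) = \beta^{-1}\sum_{n=0}^\infty \cL\Bigl(\tfrac{f^{*n}}{(c')^{n+1}}\Bigr)(\beta) = \cL\Bigl(\bone * \sum_{n=0}^\infty \tfrac{f^{*n}}{(c')^{n+1}}\Bigr)(\beta),
\]
using $\cL\bone(\beta)=\beta^{-1}$ and $\bone * f^{*0} = \bone * \delta_0 = \bone$; since knowing $\cL[W^{(q)}]$ for large $\beta$ determines $W^{(q)}$ (as in \cite{widde15}), this gives \eqref{case-bounded}.

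Finally, for the uniform convergence on compacts I would invoke Lemma~\ref{lem-ConPowDecay} (the same convolution-power decay estimate used in Theorem~\ref{thm-brownian}) applied to $g = (c')^{-1}(q+\bar\nu)$, together with the elementary bound $|(\bone * h)(x)| \le \|h\|_{L^1[0,x]}$ to transfer convergence of $\sum g^{*n}$ in $L^1[0,x]$ to uniform convergence of $\bone * \sum g^{*n}$ on $[0,x]$. The only genuinely delicate point is the one shared with Theorem~\ref{thm-brownian} — making sure the radius-of-convergence bound $\cL[|g|](\beta)<1$ holds for all sufficiently large $\beta$ and that the local-integrability of $\bar\nu$ is in force — and this is handled precisely by the hypothesis $\int_{(0,1)} x\,\nu(\diff x)<\infty$ and the non-subordinator assumption $c' > 0$; everything else is a routine transcription of the Gaussian-case argument.
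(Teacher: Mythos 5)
Your proposal is correct and follows essentially the same route as the paper's own proof: expand $\cL[W^{(q)}](\beta)=\beta^{-1}(c'-\cL\bar\nu(\beta)-q\beta^{-1})^{-1}$ into a geometric series for large $\beta$, interchange sum and Laplace transform exactly as in Theorem~\ref{thm-brownian}, and use the bound $|(\bone*h)(x)|\leq\|h\|_{L^1[0,x]}$ with Lemma~\ref{lem-ConPowDecay} for uniform convergence on compacts. The extra observations (local integrability of $\bar\nu$ from $\int_{(0,1)}x\,\nu(\diff x)<\infty$, and $c'>0$ from the non-subordinator assumption) are correct and only make explicit what the paper leaves implicit.
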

\begin{proof}
    By Lemma~\ref{lem-ExpRep} \ref{lem-ExpRep-i} the Laplace exponent of $L_t$ is given by $\psi(\beta)=c'\beta-\beta\cL\bar\nu(\beta)$ with $c'=c + \int_{(0,1)} y\,\nu(\diff y)$, and we obtain for $q\geq 0$
	\begin{align*}
		\cL[W^{(q)}](\beta)
		= \frac1{c'\beta -\beta\cL\bar\nu(\beta)-q}
		= \frac1{\beta\left(c'-\cL\bar\nu(\beta)-q\beta^{-1}\right)}.
	\end{align*}
	As in the proof of Theorem~\ref{thm-brownian}, we have $c'>\left|\cL\bar\nu(\beta)-q\beta^{-1}\right|$ for sufficiently large $\beta> 1$, and thus
	\begin{align*}
		\cL[W^{(q)}](\beta)
		&= \frac1\beta\sum_{n=0}^\infty \frac{\left(\cL\bar\nu(\beta)+q\beta^{-1}\right)^n}{(c')^{n+1}}
		= \frac1\beta\sum_{n=0}^\infty \frac{\left(\cL f(\beta)\right)^n}{(c')^{n+1}}
	\end{align*}
    with $f(x) = q + \bar\nu(x)$. The rest of the proof coincides with the proof of Theorem~\ref{thm-brownian} with the slight difference  that we now use
	\begin{align*}
		\left|\left (\bone * \sum_{n=1}^\infty \frac{f^{*n}}{(c')^{n+1}}\right)(x)\right| \leq
		\left\|\sum_{n=1}^\infty \frac{f^{*n}}{(c')^{n+1}}\right\|_{L^1[0,x]}, \quad x\geq0,
	\end{align*}	
	to obtain uniform convergence on compact subsets of $\real_+$.
\end{proof}

\begin{remark}\label{rem-pk}
    Formula \eqref{case-bounded} may be regarded as a generalisation of the Pol\-la\-czeck--Khintchine representation of ruin probabilities in the Cram\'er--Lundberg  compound Poisson model. Let $(L_t)_{t\geq 0}$ be a spectrally negative compound Poisson  process with positive drift, i.e.\ its Laplace exponent is of the form
	\begin{gather*}
		\psi(\beta) = c\beta + \lambda\int_{(0,\infty)} \left(\ee^{-\beta x} -1\right) \Pi(\diff x),
	\end{gather*}
	where $c,\lambda>0$ and $\Pi((0,\infty))=1$. Under the net-profit condition
	\begin{gather*}
        \psi'(0+) = c- \lambda \int_{(0,\infty)} x\,\Pi(\diff x) >0,
    \end{gather*}
	the ruin probability $r(x) :=\PP(\exists t\geq~0\::\: x+L_t < 0)$ is given by the Pollaczeck-Khintchine formula
	\begin{align}\label{pk}
		 r(x) = 1-\psi'(0+)\int_{(0,x)} \sum_{n=0}^\infty\frac{\lambda^n\bar\Pi^{*n}(y)}{c^{n+1}}\,\diff y, \quad x\geq 0,
	\end{align}
    with $\bar{\Pi}(y)=\int_{[y,\infty)} \Pi( \diff x)$, see, e.g.\ \cite[Eq.~IV(2.2)]{asmus10} or \cite[Eq.~(1.15)]{kypri14}. Moreover, we have, cf.\ \cite[Thm.~8.1]{kypri14},
	\begin{gather*}
 r(x)  = 1-\psi'(0+) W^{(0)}(x),
	\end{gather*}
	which yields the following representation of the $0$-scale function:
	\begin{gather*}
		W^{(0)}(x)
		= \int_{(0,x)} \sum_{n=0}^\infty\frac{\lambda^n\bar\Pi^{*n}(y)}{c^{n+1}}\,\diff y
		= \bone * \sum_{n=0}^\infty\frac{\lambda^n\bar\Pi^{*n}(x)}{c^{n+1}}.
	\end{gather*}
    Theorem~\ref{case-bounded} works, however, both for all $q\geq 0$ and for a larger class of jump measures, and it does not require any assumption on $\psi'(0+)$.

    In the actuarial context, and in renewal theory, one often uses the following definition of the convolution
	\begin{align*}
		f\circledast g(x)
		&:= \int_{[0,x]} f(x-y)\,\diff g(y),\\ \quad
		f^{\circledast n}(x) &:= \int_{[0,x]} f^{\circledast (n-1)}(x-y)\,\diff f(y),
		\quad\text{and}\quad
		f^{\circledast 0} := \bone,
	\end{align*}
	for any $n\in\nat$ and whenever these integrals are defined. In particular,  $f\circledast g = f*g'$ if $g$ is absolutely continuous with derivative $g'$. With this notation, \eqref{pk} can be rewritten in the more familiar form
	\begin{align*}
		r(x)=1-\psi'(0+)\sum_{n=0}^\infty\frac{(\lambda\mu)^n \Pi_I^{\circledast n}(x)}{c^{n+1}} = 1-\left(1-\frac{\lambda \mu}{c} \right)\sum_{n=0}^\infty\left(\frac{\lambda\mu}{c}\right)^n \Pi_I^{\circledast n}(x)
	\end{align*}
    with $\Pi_I(x):=\mu^{-1}\int_{(0,x)} \bar\Pi(y)\,\diff y  = \mu^{-1}\bone*\bar\Pi(x)$ and the first moment $\mu  =\int_{ (0,\infty)} x\,\Pi(\diff x)$ of $\Pi$.
\end{remark}

\subsection{L\'evy processes with paths of unbounded variation without a Gaussian component}\label{S4c}

It remains to consider spectrally negative L\'{e}vy processes with paths of unbounded variation and $\sigma^2=0$. The following theorem provides a series expansion of the $q$-scale involving an auxiliary function $h$   such that $h*\bar{\bar\nu}(0+)=1$.   The existence of $h$ as well as its properties and some examples will be discussed  below.   Since we are only interested in the behaviour of $h$ near $x=0$, only, $h$ need not be unique. Indeed, any $h_a:= h \cdot \bone_{[0,a]}$, with $a>0$, will be as good as $h$.  

\begin {theorem} \label{thm-unbdd}
    Let $(L_t)_{t\geq 0}$ be a spectrally negative L\'{e}vy process with triplet $(c,0,\widetilde\nu)$ such that $\int_{(0,1]} y\,\nu(\diff y) = \infty$ and $\int_{[1,\infty)} y\,\nu(\diff y) < \infty$. Recall that $c''=c-\int_{[1,\infty)} y\,\nu(\diff y)$. If there exists some $h\in L^1_{\loc}(\real_+)$ such that $\partial (h\ast \bar{\bar\nu}) \in L^1_{\loc}(\real_+)$ and $h*\bar{\bar\nu}(0+)=1$, then the $q$-scale function of $(L_t)_{t\geq 0}$ is given by
	\begin{align}\label{eq-h_series}
		W^{(q)}=H*\sum_{n=0}^\infty f^{*n},\quad q\geq 0.
	\end{align}
    Here, $H(x) :=  \int_{(0,x)} h(y)\,\diff y = \bone\ast h(x)$ is the primitive of $h$, and
    \begin{gather*}
        f(x) := qH(x)- c''h(x)-\partial(h*\bar{\bar\nu})(x),\quad x\geq0.
    \end{gather*}
    The series on the right-hand side of \eqref{eq-h_series} converges uniformly on compact subsets of $\real_+$.
\end {theorem}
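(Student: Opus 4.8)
The plan is to follow the template of Theorems~\ref{thm-brownian} and~\ref{thm-bounded}: rewrite $\cL[W^{(q)}]$ as a geometric series in a suitable Laplace transform and invert it term by term. The new feature is that the dominant term which was available before (the $\sigma^2\beta^2$ of Theorem~\ref{thm-brownian}, respectively the $c'\beta$ of Theorem~\ref{thm-bounded}) has to be manufactured out of the auxiliary function $h$. By Lemma~\ref{lem-ExpRep} \ref{lem-ExpRep-ii} (with $\sigma^2=0$) the Laplace exponent is $\psi(\beta)=c''\beta+\beta^2\cL\bar{\bar\nu}(\beta)$, so
\begin{align*}
	\cL[W^{(q)}](\beta)=\frac{1}{c''\beta+\beta^2\cL\bar{\bar\nu}(\beta)-q}.
\end{align*}
Because $W^{(q)}=H*\sum_n f^{*n}$ involves only causal convolutions, its validity on a bounded interval $[0,T]$ depends on $h$ through $h|_{[0,T]}$ only. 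I would therefore fix $T>0$, replace $h$ by $h_T:=h\cdot\bone_{[0,T]}$ — which is legitimate by the remark preceding the theorem, changes neither $H$ nor $f$ on $[0,T]$, and preserves $h_T\in L^1_{\loc}(\real_+)$, $\partial(h_T*\bar{\bar\nu})\in L^1_{\loc}(\real_+)$ and $(h_T*\bar{\bar\nu})(0+)=1$ — and prove the identity with $h_T$ in place of $h$. The gain is that now $h_T\in L^1(\real_+)$, so all functions appearing below will have sub-exponential growth.

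Next I would multiply numerator and denominator of $\cL[W^{(q)}]$ by $\cL h_T(\beta)$. Since $\partial(h_T*\bar{\bar\nu})\in L^1_{\loc}(\real_+)$, the function $h_T*\bar{\bar\nu}$ is locally absolutely continuous with $(h_T*\bar{\bar\nu})(0+)=1$, so \eqref{eq-LapDif} gives $\beta\cL h_T(\beta)\cL\bar{\bar\nu}(\beta)=\cL[\partial(h_T*\bar{\bar\nu})](\beta)+1$, hence $\beta^2\cL h_T(\beta)\cL\bar{\bar\nu}(\beta)=\beta+\beta\cL[\partial(h_T*\bar{\bar\nu})](\beta)$. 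Substituting this and using $\beta^{-1}\cL h_T(\beta)=\cL H_T(\beta)$, the denominator turns into $\beta\bigl(1-\cL f_T(\beta)\bigr)$ with $f_T=qH_T-c''h_T-\partial(h_T*\bar{\bar\nu})$ exactly as in the statement, whence
\begin{align*}
	\cL[W^{(q)}](\beta)=\frac{\cL H_T(\beta)}{1-\cL f_T(\beta)}.
\end{align*}
Provided $\cL|f_T|(\beta)<1$ for $\beta$ large, I would expand $(1-\cL f_T(\beta))^{-1}=\sum_{n\ge0}(\cL f_T(\beta))^n$, invert term by term using $(\cL f_T(\beta))^n=\cL(f_T^{*n})(\beta)$ and $\cL H_T(\beta)\,\cL(f_T^{*n})(\beta)=\cL(H_T*f_T^{*n})(\beta)$, and justify the interchange of summation and Laplace transform exactly as in the proof of Theorem~\ref{thm-brownian}: the tail is controlled by $\bigl|\cL\bigl(\sum_{n\ge0}f_T^{*n}\bigr)(\beta)-\sum_{n=0}^{m}\cL(f_T^{*n})(\beta)\bigr|\le\sum_{n>m}\bigl(\cL|f_T|(\beta)\bigr)^n\to0$. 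Since $W^{(q)}$ is characterised by the values of $\cL[W^{(q)}]$ at large $\beta$ (cf.\ \cite[Thm.~6.3]{widde15}), uniqueness of the Laplace transform gives $W^{(q)}=H_T*\sum_{n\ge0}f_T^{*n}$; on $[0,T]$ this coincides with $H*\sum_{n\ge0}f^{*n}$, and since $T>0$ was arbitrary, \eqref{eq-h_series} follows. For the uniform convergence on compact subsets I would use that $H=\bone*h$ is continuous, hence locally bounded, so $\bigl|\bigl(H*\sum_{n>m}f^{*n}\bigr)(x)\bigr|\le\|H\|_{L^\infty[0,x]}\bigl\|\sum_{n>m}f^{*n}\bigr\|_{L^1[0,x]}$, and then invoke Lemma~\ref{lem-ConPowDecay}.

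The main obstacle is the estimate $\cL|f_T|(\beta)<1$ for $\beta$ large. As $\cL|f_T|$ is non-increasing and, by dominated convergence, tends to $0$ as soon as it is finite for one value of $\beta$, it suffices to show that $f_T$ has sub-exponential growth. The truncation makes $h_T\in L^1(\real_+)$ and $H_T$ bounded (even eventually constant), so the real issue is $\partial(h_T*\bar{\bar\nu})$. Here the moment assumption $\int_{[1,\infty)}y\,\nu(\diff y)<\infty$ is used decisively: it forces the integrated tail $\bar{\bar\nu}$ to be bounded and to decrease to $0$ on $[T,\infty)$, and moreover to be absolutely continuous there with $\partial\bar{\bar\nu}=-\bar\nu$; hence $\partial(h_T*\bar{\bar\nu})=-h_T*\bar\nu$ on $[T,\infty)$, and a Tonelli estimate bounds $\|h_T*\bar\nu\|_{L^1([T,\infty))}$ by $(|h|*\bar{\bar\nu})(T)$, which is finite by Young's inequality \eqref{eq-Young} (and in any case for all but a Lebesgue-null set of $T$, which is all we need). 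Together with $\partial(h_T*\bar{\bar\nu})\in L^1_{\loc}(\real_+)$ this yields $\partial(h_T*\bar{\bar\nu})\in L^1(\real_+)$, so that $f_T=qH_T-c''h_T-\partial(h_T*\bar{\bar\nu})$ is sub-exponential. A secondary but essential technical point is to verify carefully that $h_T*\bar{\bar\nu}$ is indeed locally absolutely continuous with $(h_T*\bar{\bar\nu})(0+)=1$, since this is precisely what licenses the application of \eqref{eq-LapDif}.
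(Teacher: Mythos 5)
Your proposal is correct and follows essentially the same route as the paper's proof: multiply through by $\cL h(\beta)$, use \eqref{eq-LapDif} together with $h*\bar{\bar\nu}(0+)=1$ to turn the denominator into $\beta(1-\cL f(\beta))$, expand geometrically, invert term by term as in Theorem~\ref{thm-brownian}, and reduce to an integrable $h$ by truncation (the paper truncates at the end via $h_a=h\cdot\bone_{[0,a]}$, you truncate at the start). Your Tonelli estimate showing $\partial(h_T*\bar{\bar\nu})\in L^1(\real_+)$, which guarantees $\cL[|f_T|](\beta)<1$ for large $\beta$, is in fact more explicit than the paper's corresponding step.
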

\begin{proof}
    Let $h$ be the function from the statement of the theorem   and assume, in addition, that $h\in L^1(\real_+)$ which ensures that the  Laplace transform exists. The identity \eqref{eq-LapDif} for the Laplace transforms shows
	\begin{align*}
		\cL[\partial(h*\bar{\bar\nu})](\beta)
         = \beta\cL h(\beta) \cL\bar{\bar\nu}(\beta)- h*\bar{\bar\nu}(0+)
        = \beta\cL h(\beta) \cL\bar{\bar\nu}(\beta)-1.
	\end{align*}
    By assumption, $\nu_h:=\partial(h*\bar{\bar\nu})$ exists. Thus, the Laplace exponent $\psi$ can be written as
	\begin{gather*}
		\psi(\beta) = c''\beta + \frac\beta{\cL h(\beta)}\left( \cL\nu_h(\beta) +1\right)
	\end{gather*}
	and, from the definition of the scale function, we get for all $q\geq 0$
	\begin{align*}
		\cL[W^{(q)}](\beta)
		= \frac{\cL h(\beta)}\beta\frac1{\left( c''\cL h(\beta) + \cL\nu_h(\beta) + 1 - q\beta^{-1}\cL h(\beta) \right)}.
	\end{align*}
	Since $\nu_h, h\in L^1_{\loc}(\real_+)$ we obtain
	\begin{align*}
		\cL[W^{(q)}](\beta)=\frac{\cL h(\beta)}{\beta}\sum_{n=0}^\infty \left(\cL f(\beta)\right)^n
	\end{align*}
    for large enough $\beta> 1$ and with $f(x) = q\int_{ (0,x)} h(y)\,\diff y - c'' h(x) - \partial(h*\bar{\bar\nu})(x)$ as in Theorem~\ref{thm-unbdd}.   Using the same arguments as in the proof of Theorem~\ref{thm-brownian} we can invert the Laplace transform and obtain formula \eqref{eq-h_series}.

  If $h\in L^1_\loc(\real+)\setminus L^1(\real_+)$, we set $ h_a   =h\cdot\bone_{[0,a]}$ for some $a>0$ and observe  that  $h_a$   meets the requirements stated in the theorem and that $\cL( h_a  )$ is well-defined. Thus, $W^{(q)}$ is given by the series expansion \eqref{eq-h_series} w.r.t.\  $h_a$ .  Changing $h$ for  $h_a$    does not affect $f(x)$, hence $W^{(q)}(x)$ if $x\in [0,a]$. Since $a>0$ is arbitrary, the proof is complete.
\end{proof}

\begin{remark}
	Note that the same reasoning as explained in Remark \ref{rem-addcond} allows us to omit the moment condition $\int_{[1,\infty)} y\,\nu(\diff y) < \infty$ in Theorem \ref{thm-unbdd}.
\end{remark}

\subsubsection*{Resolvents, Bernstein functions and renewal equations}

In order to find a suitable function $h$ in the setting of Theorem \ref{thm-unbdd}, i.e.\ $h\in L^1_{\loc}(\real_+)$ such that $\partial (h\ast \bar{\bar\nu}) \in L^1_{\loc}(\real_+)$ and $h*\bar{\bar\nu}(0+)=1$, let us consider the auxiliary problem
\begin{gather}\label{resolvent}
    \rho * \bar{\bar\nu} \equiv 1,
\end{gather}
where $\rho = \rho(\diff x)$ is a measure on $[0,\infty)$ and $\bar{\bar\nu}$ is the integrated tail function. Equations of this type are also used in the theory of Volterra integral equations where the measure $\rho$ appearing in \eqref{resolvent} is called the resolvent of the function $\bar{\bar\nu}$, cf.\ \cite{gripe10}. Taking Laplace transforms on both sides of \eqref{resolvent} yields
\begin{gather}\label{lap-resolvent}
    \cL[\rho](\beta) = \frac{1}{\beta\cL[\bar{\bar\nu}](\beta)},\quad \beta > 0.
\end{gather}

The calculations in the proof of Lemma~\ref{lem-ExpRep} in our current setting show that $\beta\cL[\bar{\bar\nu}](\beta)$ $=\int_{(0,\infty)} \left(1-\ee^{-x\beta}\right)\bar\nu(x)\,\diff x$. Hence $\beta\cL[\bar{\bar\nu}](\beta)$ is a Bernstein function ($\BF$) with   L\'evy   triplet $(0,0,\bar\nu(x)\,\diff x)$, cf.\ \cite[Thm.~3.2]{schil12}. Thus, its  reciprocal ${1}/(\beta\cL[\bar{\bar\nu}](\beta))$ is a potential, cf.\ \cite[Def.~5.24]{schil12}, and in particular, it is a completely monotone function. From Bernstein's theorem, cf.\ \cite[Thm.~1.4]{schil12}, we thus see that \eqref{lap-resolvent}, hence \eqref{resolvent}, \emph{always} has a \emph{measure-valued} solution $\rho$, see also \cite{gripe80}.
  These observations also allow for an important probabilistic interpretation of \eqref{resolvent}:  $\rho \ast \bar{\bar{\nu}}=1$ is the probability that a subordinator $(S_t)_{t\geq 0}$ with BF $\beta\cL[\bar{\bar\nu}](\beta)$ and L\'evy triplet $(0,0,\bar\nu(x)\,\diff x)$ crosses any level $x>0$, see Bertoin \cite[Chapter III.2]{berto98}. The measure $\rho$ is then the potential or zero-resolvent of $(S_t)_{t\geq 0}$.

For our purpose, however, only $L^1_\loc(\real_+)$-solutions of \eqref{resolvent} are of interest. The following lemma constructs such solutions in the setting of Theorem \ref{thm-unbdd}.

\begin{lemma}\label{lem-density}
    Let $\nu$ be as in Theorem~\ref{thm-unbdd}. Then the convolution equation $\rho * \bar{\bar\nu}(x) = 1$, $x\geq 0$, has a unique strictly positive solution $\rho\in L^1_\loc(\real_+)$.
\end{lemma}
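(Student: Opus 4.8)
The plan is to realize $\rho$ as the density of the potential measure of the subordinator $(S_t)_{t\geq0}$ identified in the text, and then upgrade from a measure to an $L^1_\loc$-density by exploiting the hypothesis $\int_{(0,1]}y\,\nu(\diff y)=\infty$. Recall that $\phi(\beta):=\beta\cL[\bar{\bar\nu}](\beta)=\int_{(0,\infty)}(1-\ee^{-\beta x})\bar\nu(x)\,\diff x$ is a Bernstein function with L\'evy triplet $(0,0,\bar\nu(x)\,\diff x)$; crucially, its drift coefficient is $0$, and $\lim_{\beta\to\infty}\phi(\beta)/\beta=0$, while $\int_{(0,1)}\bar\nu(x)\,\diff x=\int_{(0,1]}y\,\nu(\diff y)=\infty$ forces $\phi(\beta)\to\infty$ as $\beta\to\infty$. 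The associated subordinator therefore has zero drift and infinite L\'evy measure near $0$, hence is of infinite activity and infinite variation-type in the relevant sense; by Kesten's theorem (see Bertoin \cite[Chapter III, Theorem 19]{berto98}) its potential measure $\rho(\diff x)$ is absolutely continuous with a density that I will also call $\rho$, and one may take $\rho\in L^1_\loc(\real_+)$. This density solves $\rho*\bar{\bar\nu}(x)=1$ for a.e.\ $x\geq0$, and since both sides are right-continuous (the left side because $\bar{\bar\nu}$ is continuous and $\rho\in L^1_\loc$) the identity holds for every $x\geq0$.

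For existence I would proceed more self-containedly, without quoting Kesten, via the Volterra renewal equation. Write $\bar{\bar\nu}(x)=\bar{\bar\nu}(0+)-\int_{(0,x)}\bar\nu(y)\,\diff y$ on $(0,\infty)$; since $\int_{(0,1]}y\,\nu(\diff y)=\infty$ we have $\bar{\bar\nu}(0+)=\infty$, which is exactly what rules out the finite-variation normalisation and makes the problem genuinely of the present (unbounded-variation) type. The equation $\rho*\bar{\bar\nu}=1$ can be differentiated: formally $\rho*\bar{\bar\nu}'= -\delta_0\cdot(\text{boundary})$... instead it is cleaner to pass to Laplace transforms, where \eqref{lap-resolvent} gives $\cL\rho(\beta)=1/\phi(\beta)$, a completely monotone function (reciprocal of a Bernstein function with $\phi(0+)=0$), hence by Bernstein's theorem the Laplace transform of a measure $\rho$; the point is then to show this measure has no atoms and no singular part. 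The atom at $0$ has mass $\lim_{\beta\to\infty}1/\phi(\beta)=0$ since $\phi(\beta)\to\infty$; absolute continuity of the rest follows because $1/\phi$ is the Laplace transform of a potential whose subordinator has no drift and infinite L\'evy measure, and one invokes the standard regularity result for such potentials. To get the $L^1_\loc$ bound directly: from $\cL\rho(\beta)=1/\phi(\beta)$ and $\phi(\beta)\sim \beta\bar{\bar\nu}$-type lower bounds one estimates $\int_0^x\rho(y)\,\diff y=\bone*\rho(x)$, whose Laplace transform is $1/(\beta\phi(\beta))$, and a Tauberian/abelian argument controls its growth.

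Uniqueness is the easy part and I would do it first: if $\rho_1,\rho_2\in L^1_\loc(\real_+)$ both satisfy the equation, then $\sigma:=\rho_1-\rho_2\in L^1_\loc(\real_+)$ satisfies $\sigma*\bar{\bar\nu}\equiv0$ on $[0,\infty)$. Convolving with any fixed $L^1_\loc$ resolvent-type kernel, or simply taking Laplace transforms on $[\beta_0,\infty)$ where everything converges, gives $\cL\sigma(\beta)\,\cL\bar{\bar\nu}(\beta)=0$; since $\cL\bar{\bar\nu}(\beta)>0$ for all $\beta>0$, we get $\cL\sigma\equiv0$ on $(\beta_0,\infty)$, hence $\sigma=0$ a.e.\ by injectivity of the Laplace transform on $L^1_\loc$ functions with at most exponential growth. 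Strict positivity of $\rho$: from $\rho*\bar{\bar\nu}=1$ and $\bar{\bar\nu}\geq0$ one sees $\rho\not\equiv0$ on any interval $[0,\epsilon]$; more precisely, since $\rho$ is the potential density of a subordinator that hits every neighbourhood of $0$, it is strictly positive on $(0,\infty)$ — alternatively, if $\rho$ vanished on a set of positive measure in $[0,x_0]$ one derives a contradiction with $\rho*\bar{\bar\nu}(x_0)=1$ using that $\bar{\bar\nu}$ is strictly decreasing and strictly positive near $0$.

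The main obstacle I anticipate is establishing \emph{absolute continuity} of the resolvent measure $\rho$ — i.e.\ going from "$1/\phi$ is completely monotone, hence a Laplace transform of a measure" to "that measure has an $L^1_\loc$ density" — since complete monotonicity alone only yields a measure (e.g.\ $1/(1+\beta)$ would correspond partly to behaviour one must exclude here). This is exactly where the hypothesis $\int_{(0,1]}y\,\nu(\diff y)=\infty$, equivalently $\bar{\bar\nu}(0+)=\infty$ and $\phi(\beta)\to\infty$, must be used decisively; the cleanest route is to quote the standard fact (Bertoin \cite[III.2]{berto98}) that a subordinator with zero drift whose L\'evy measure is infinite has an absolutely continuous potential measure with a density that is, in fact, lower semicontinuous and strictly positive on $(0,\infty)$, and then verify the $L^1_\loc$ integrability from $\cL\rho=1/\phi$ together with the growth of $\phi$ at infinity.
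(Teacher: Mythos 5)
Your outline correctly isolates the crux — upgrading the measure-valued solution $\rho$ (which exists because $1/\phi$ with $\phi(\beta)=\beta\cL[\bar{\bar\nu}](\beta)$ is completely monotone) to an $L^1_\loc$ density — but the way you propose to close that step does not work. The ``standard fact'' you invoke, that a driftless subordinator with infinite L\'evy measure has an absolutely continuous potential measure, is false as stated: zero drift plus infinite activity only rules out atoms (and, via Kesten, makes points polar); it does not exclude a singular continuous renewal measure. The hypothesis that actually rescues the probabilistic route here is that the L\'evy measure of the auxiliary subordinator is $\bar\nu(x)\,\diff x$, hence itself absolutely continuous, so that Tucker's theorem (Sato, Thm.~27.7) gives absolute continuity of the transition probabilities and hence of all resolvent measures — this is exactly the side remark the paper makes before its proof, and it is a different hypothesis from the one you cite. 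Your ``self-contained'' alternative is announced but not carried out: you start to differentiate $\rho*\bar{\bar\nu}=1$, abandon it, and fall back on the same unproved regularity claim. The paper's actual argument at this point is short and worth knowing: differentiating $\cL[\rho]=1/\phi$ gives $\partial\cL[\rho]=-\phi^{-2}\partial\phi=-\cL[\rho*\rho*\bar N]$ with $\bar N(x)=x\bar\nu(x)$, while also $\partial\cL[\rho]=-\cL[y\,\rho(\diff y)]$; uniqueness of the Laplace transform then yields $x\,\rho(\diff x)=(\rho*\rho*\bar N)(x)\,\diff x$, so $\rho$ has density $r(x)=x^{-1}(\rho*\rho*\bar N)(x)$ on $(0,\infty)$, and $\rho\{0\}=0$ follows from $\bar{\bar\nu}(0+)=\infty$.

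Two smaller issues. First, your strict-positivity argument is not sound: ``$\rho$ vanishes on a set of positive measure in $[0,x_0]$'' does not contradict $\rho*\bar{\bar\nu}(x_0)=1$, and the claim that a potential density of a subordinator is automatically positive on all of $(0,\infty)$ needs justification. The paper gets positivity from the explicit formula for $r$ after first showing $\supp\rho=[0,\infty)$ via $\supp(\mu*\sigma)=\overline{\supp\mu+\supp\sigma}$ applied to $\rho*\bar{\bar\nu}=\bone$ and to $x\rho(\diff x)=\rho*\rho*\bar N(x)\,\diff x$. Second, in the uniqueness step you take Laplace transforms of $\sigma=\rho_1-\rho_2\in L^1_\loc(\real_+)$ without knowing these transforms converge; this is repairable by localising (the equation on $[0,T]$ only involves the restrictions to $[0,T]$, which do have Laplace transforms), but as written it is a gap.
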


    Using the probabilistic interpretation of $\rho*\bar{\bar\nu}$, the fact that $\rho$ is absolutely continuous is not surprising. One can argue like this: Since the L\'evy measure $\bar\nu(x)\,\diff x$ of the subordinator $(S_t)_{t\geq 0}$ is absolutely continuous, this property is inherited by the transition densities $\PP(S_t\in dx)$, cf.\ Sato \cite[Thm.~27.7]{sato13} and all $q$-resolvent measures
    \cite[
    p. 242]{sato13}, in particular by the zero-resolvent measure $\rho$. The following proof, based on the renewal equation, gives a short and direct approach to this, complementing also Neveu's renewal theorem, cf.\ \cite[Prop.~1.7]{berto99}.

\begin{proof}[  Proof of Lemma~\ref{lem-density}]
    Set $\phi(\beta) := \int_{(0,\infty)} \left(1-e^{-x\beta}\right)\bar\nu(x)\,\diff x$ and $\bar N(x) = x\bar\nu(x)$.
    Under the assumptions of Theorem~\ref{thm-unbdd} we know that $\bar{\bar\nu}(0+)=\infty$.
     We have already seen, that there exists a measure $\rho$ on $[0,\infty)$ solving \eqref{resolvent}. Since $\bar{\bar\nu}(0+)=\infty$, the measure $\rho$ cannot have atoms: indeed, if $\rho \geq c\delta_{y}$ for some $c>0$ and $y\in [0,\infty)$, this would lead to the contradiction
    \begin{gather*}
    	1 = \rho*\bar{\bar\nu}(x) \geq c\delta_y*\bar{\bar\nu}(x) = c\bar{\bar\nu}(x-y) \xrightarrow{x\downarrow y} c\bar{\bar\nu}(0+) = \infty.
    \end{gather*}
    Moreover, $\rho$ satisfies $\cL[\rho](\beta) = (\beta\cL[\bar{\bar\nu}](\beta))^{-1} = 1/\phi(\beta)$. Therefore,
    \begin{gather*}
        \partial\cL[\rho] = \partial\frac 1{\phi} = -\frac 1{\phi^2}\partial\phi = -\cL[\rho]\cL[\rho]\cL[y\bar\nu(y)]
        = -\cL[\rho*\rho*\bar N].
    \end{gather*}
    On the other hand, $\partial\cL[\rho] = -\cL[y\rho]$, and the uniqueness of the Laplace transform shows that
    \begin{gather}\label{eq-double-conv}
        x\rho(\diff x) = \rho*\rho*\bar N(x)\,\diff x.
    \end{gather}
    This implies $\rho(\diff x) = \rho\{0\}\delta_0(\diff x) + r(x)\,\diff x$ with the density function $r(x) =  x^{-1} (\rho *\rho*\bar N)(x)$ on $(0,\infty)$. Because of $\bar{\bar\nu}(0+)=\infty$ we know that $\rho\{0\}=0$.

    Further, recall that $\supp(\rho*\sigma)=\overline{\supp\rho+\supp\sigma}$ holds for any two measures $\rho, \sigma$. Thus we see from $\rho*\bar{\bar\nu}=1$ that $0\in\supp\bar{\bar\nu}\cap\supp\rho$; in particular, $\supp\rho$ and $\supp\bar{\bar\nu}$ contain some neighbourhood of $0$. Using again \eqref{eq-double-conv}, $\supp\rho = \overline{\supp\rho+\supp\rho+\supp\bar N}$ and $0\in\supp\bar N$ (this follows from $0\in\supp\bar{\bar\nu}$), we conclude that $\supp\rho$ is unbounded and even $\supp\rho=[0,\infty)$, since it contains a neighbourhood of $0$.

Finally, by the very definition of the convolution,
\begin{gather*}
	r(x)
	= \frac 1x \int_{(0,x)}\int_{(0,x-y)} \bar N(x-y-z)\,\rho(\diff z)\,\rho(\diff y),
\end{gather*}
proving that $r(x)>0$ since $\supp\rho=[0,\infty)$ and $0\leq \bar N\not\equiv 0$. Thus $\rho$ is positive as well.
\end{proof}

The proof of further regularity properties of solutions to \eqref{resolvent} seems to be quite difficult, with the exception of complete monotonicity ($\CM$) treated in the following corollary. Although this result is known, cf.\ \cite{rao06} or \cite[Rem 2.2]{song06}, we provide a short alternative proof for the readers convenience.

\begin{corollary}\label{cor-cm-density}
    Let $\nu$ be as in Theorem~\ref{thm-unbdd}  and assume that $\nu(\diff z) = n(z)\,\diff z$ with a completely monotone function $n\in \CM$. Then the convolution equation $\rho * \bar{\bar\nu}(x) = 1$, $x\geq 0$ has a unique solution $\rho\in \CM$. Conversely, if the solution of $\rho*\bar{\bar\nu}=1$ is completely monotone, so is $\bar{\bar\nu}$, hence $n$.
\end{corollary}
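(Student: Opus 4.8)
The plan is to translate the convolution equation into Laplace transforms and then read off everything from the duality between complete Bernstein functions and Stieltjes functions. Recall from the discussion preceding Lemma~\ref{lem-density} that the (unique, $L^1_\loc(\real_+)$) solution $\rho$ of $\rho*\bar{\bar\nu}=1$ satisfies $\cL[\rho](\beta)=1/\phi(\beta)$, where
$\phi(\beta):=\beta\cL[\bar{\bar\nu}](\beta)=\int_{(0,\infty)}(1-\ee^{-x\beta})\bar\nu(x)\,\diff x$
is the Bernstein function with L\'evy triplet $(0,0,\bar\nu(x)\,\diff x)$. Under the hypotheses of Theorem~\ref{thm-unbdd} we have $\phi(\beta)\uparrow\bar{\bar\nu}(0+)=\infty$ as $\beta\to\infty$ by monotone convergence, so $1/\phi$ is finite on $(0,\infty)$ and vanishes at infinity; dually, since $\phi$ has zero drift, $\cL[\bar{\bar\nu}](\beta)=\phi(\beta)/\beta\to 0$ as well.

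The heart of the argument is the chain of equivalences
\[
  n\in\CM \;\Longleftrightarrow\; \bar\nu\in\CM \;\Longleftrightarrow\; \bar{\bar\nu}\in\CM \;\Longleftrightarrow\; \phi\in\operatorname{CBF} \;\Longleftrightarrow\; 1/\phi\in\cS \;\Longleftrightarrow\; \rho\in\CM,
\]
where $\operatorname{CBF}$ denotes the class of complete Bernstein functions and $\cS$ that of Stieltjes functions. The first two equivalences hold because $\bar\nu(x)=\int_{[x,\infty)}n(y)\,\diff y$ and $\bar{\bar\nu}(x)=\int_{[x,\infty)}\bar\nu(y)\,\diff y$ are finite and the finite tail integral of a completely monotone function is again completely monotone (Bernstein's theorem \cite[Thm.~1.4]{schil12} together with Tonelli), while conversely $n=-\partial\bar\nu$ and $\bar\nu=-\partial\bar{\bar\nu}$ are derivatives of completely monotone functions. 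The third equivalence is the characterisation that a Bernstein function belongs to $\operatorname{CBF}$ precisely when its L\'evy measure has a completely monotone density, cf.\ \cite{schil12}. The fourth is the reciprocal duality $\operatorname{CBF}\setminus\{0\}\ni\phi\mapsto 1/\phi\in\cS$ (and back) of \cite{schil12}. The last equivalence uses that a Stieltjes function vanishing at infinity is the Laplace transform of an $L^1_\loc$ completely monotone function: from $\cL[\rho]=1/\phi\in\cS$ we get $\rho\in\CM$ by uniqueness of Laplace transforms, and such $\rho$ is automatically in $L^1_\loc(\real_+)$ because its Laplace transform is finite, so it coincides with the solution from Lemma~\ref{lem-density}; conversely $\cL$ of any $L^1_\loc$ completely monotone function is a Stieltjes function. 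This proves the first assertion. For the converse, reading the chain from right to left, $\rho\in\CM$ gives $\cL[\bar{\bar\nu}](\beta)=(\beta\cL[\rho](\beta))^{-1}=\beta^{-1}\bigl(1/\cL[\rho](\beta)\bigr)$, where $1/\cL[\rho]\in\operatorname{CBF}$ (reciprocal of a nonzero Stieltjes function), hence the product is a Stieltjes function vanishing at infinity, so $\bar{\bar\nu}\in\CM$, and then $\bar\nu=-\partial\bar{\bar\nu}$ and $n=-\partial\bar\nu$ are completely monotone.

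Thus the conceptual content is a single chain of equivalences and the real work is bookkeeping. The points that need care — none of them a genuine obstacle — are: (i) the precise passage between ``Stieltjes function vanishing at infinity'' and ``Laplace transform of a completely monotone function'', together with the observation that the representing function is locally integrable (so that $\rho\{0\}=0$ and the uniqueness of Lemma~\ref{lem-density} applies), for which finiteness of $1/\phi(\beta)$ for $\beta>0$ suffices; (ii) pinning down in \cite{schil12} the exact forms of the $\operatorname{CBF}$/Stieltjes reciprocal duality and of the statement ``$\operatorname{CBF}$ $\Longleftrightarrow$ completely monotone L\'evy density''; and (iii) verifying the elementary lemma that the finite tail integral of a completely monotone function is completely monotone.
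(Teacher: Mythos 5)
Your proposal is correct and follows essentially the same route as the paper: reduce $n\in\CM$ to $\bar{\bar\nu}\in\CM$ via tail integrals, identify $\beta\cL[\bar{\bar\nu}](\beta)$ as a complete Bernstein function, invoke the reciprocal duality between complete Bernstein functions and Stieltjes functions to see that $\cL[\rho]$ is Stieltjes, and use $\bar{\bar\nu}(0+)=\infty$ (equivalently, the vanishing of $1/\phi$ at infinity) to rule out an atom of $\rho$ at the origin, with the converse obtained by symmetry. The only cosmetic difference is that you enter the CBF class through the ``completely monotone L\'evy density'' characterisation, whereas the paper observes directly that $\cL[\bar{\bar\nu}]$ is a Stieltjes (double Laplace) transform.
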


\begin{proof}
    From $\bar{\bar\nu}(x) = \int_{[x,\infty)}\int_{[y,\infty)} n(z)\,\diff z\,\diff y$ we see that $n\in \CM$ if, and only if, $\bar{\bar\nu}\in \CM$. Moreover, $\cL\bar{\bar\nu}$ is a Stieltjes transform (i.e.\ a double Laplace transform) and $\beta\cL\bar{\bar\nu}(\beta)$ a complete Bernstein function, see \cite[Def.~2.1, Thm.~6.2]{schil12}. Further, the reciprocals of complete Bernstein functions are Stieltjes functions \cite[Thm.~7.3]{schil12}, i.e.\ $\rho(\diff y) = \rho\{0\}\delta_0 + r(y)\,\diff y$ for some completely monotone function $r$. As before, the condition $\bar{\bar\nu}(0+)=\infty$ ensures that $\rho\{0\}=0$.
    The converse statement follows from the symmetric roles played by $\rho$ and $\bar{\bar\nu}$ in the equation $\rho*\bar{\bar\nu}=1$.
\end{proof}

\begin{remark}\label{rem-density}
        With the exception of Corollary~\ref{cor-cm-density}, higher-order regularity properties of the solution to $\rho*\bar{\bar\nu}\equiv 1$ seem to be quite difficult to prove, see also \cite{gripe78} for a related study.
    The following observation points in an interesting direction. Recall that a Bernstein function $f\in \BF$ is a special Bernstein function (see \cite[Chapter 11]{schil12}), if the conjugate function $\beta/f(\beta)$ is also a Bernstein function; iterating this argument, it becomes `self-improving' and shows that $\beta/f(\beta)$ is even a special Bernstein function. A subordinator whose Laplace exponent is a special Bernstein function is a special subordinator. Note that complete Bernstein functions are special, and we have used this (implicitly) in Corollary~\ref{cor-cm-density}.

    Assume, for a moment, that we know $\beta\cL[\rho](\beta)\in \BF$. Then  $\cL[\bar{\bar\nu}](\beta)=1/(\beta\cL[\rho](\beta))$ is the potential of a special subordinator, hence $\beta\cL[\rho](\beta)$ is a special Bernstein function. Thus $\beta/\beta\cL[\rho](\beta) = 1/\cL[\rho](\beta) = \beta\cL[\bar{\bar\nu}](\beta)$ is also a special Bernstein function. This, in turn, means that $\cL[\rho](\beta) = 1/\beta\cL[\bar{\bar\nu}](\beta)$ is the potential of a special subordinator and according to \cite[Thm.~11.3]{schil12}, $\rho$ is of the form $\rho(\diff x) = c\delta_0(\diff x) + r(x)\,\diff x$ for some constant $c\geq 0$ and a decreasing function $r(x)$. If we assume, as before, that $\nu(0,\infty)=\infty$, $\rho$ has no atoms, so $c=0$ and $\rho(\diff x) = r(x) \,\diff x$.

	From $\int_{(0,\infty)} (1-e^{-\beta x}) \mu(\diff x) = \beta \cL[\bar{\mu}](\beta)$ we see that $\beta\cL[\rho](\beta)$ is a Bernstein function, only if $\rho$ has a decreasing density. This means, the above argument becomes a vicious circle, unless we can come up with an independent criterion: let us consider the case that $x\mapsto\bar\nu(x)$ is logarithmically convex,   i.e.
    \begin{gather*}
        \bar\nu(\lambda x + (1-\lambda)y)\leq \bar\nu(x)^\lambda\bar\nu(y)^{1-\lambda}
        \quad\text{for all $x,y>0$ and $\lambda \in (0,1)$}.
    \end{gather*}
    Since $\bar{\bar\nu}(x)$ has a  negative  derivative, $\beta\cL[\bar{\bar\nu}](\beta)=\int_{(0,\infty)} \left(1-e^{-\beta x}\right)\bar\nu(x)\,\diff x$ is a Bernstein function and, because of log-convexity, even a special Bernstein function (see \cite[Thm.~11.11]{schil12}), hence the conjugate function $1/\cL[\bar{\bar\nu}](\beta) = \beta\cL[\rho](\beta)$ is a Bernstein function. Together with the above consideration this proves:

    \medskip\noindent
    {\itshape If $\nu(0,\infty)=\infty$ and $x\mapsto\bar\nu(x)$ is logarithmically convex, the solution to $\rho*\bar{\bar\nu}\equiv 1$ is of the form $\rho(\diff x) = r(x)\,\diff x$ with a decreasing density $r(x)>0$ on $(0,\infty)$.}\medskip

    See also \cite[Lemma 2.2]{kypri10} for a similar result stated in terms of potential measures of subordinators.
\end{remark}

The following lemma provides a particular representation of the resolvent of $\bar{\bar\nu}$ as a continuous function; in fact, it is the
Radon-Nikodym derivative of the scale function of some L\'{e}vy process.

\begin{lemma}\label{lem-compensated}
    Let $(L_t)_{t\geq 0}$ be a spectrally negative L\'{e}vy process with triplet $(c,0,\widetilde\nu)$ such that $\int_{(0,1)} y\,\nu(\diff y) = \infty$ and $\int_{[1,\infty)} y\,\nu(\diff y) < \infty$. Denote by $\widetilde L_t:= L_t-\EE[L_t]$ the compensated process, and write $\widetilde W$ for the $0$-scale function of $(\widetilde L_t)_{t\geq 0}$. Then $\widetilde W' * \bar{\bar\nu}= \bone$, i.e.\ $\rho:=\widetilde W'$ is a resolvent of $\bar{\bar\nu}$. In particular, $\rho\in\cC(0,\infty)$.
\end{lemma}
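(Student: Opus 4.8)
The plan is to identify the compensated process $\widetilde L$ explicitly, read off the Laplace transform of its $0$-scale function, and then use the differentiation rule \eqref{eq-LapDif} to recognise $\widetilde W'$ as the resolvent of $\bar{\bar\nu}$; the continuity assertion will be pulled from the known smoothness of scale functions in the unbounded-variation regime.

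First, since $\sigma^2=0$ and $\int_{[1,\infty)} y\,\nu(\diff y)<\infty$, Lemma~\ref{lem-ExpRep} \ref{lem-ExpRep-ii} gives $\psi(\beta)=c''\beta+\beta^2\cL\bar{\bar\nu}(\beta)$ with $c''=\EE L_1$. The compensated process is $\widetilde L_t=L_t-\EE[L_t]=L_t-c''t$, a spectrally negative L\'evy process with the same L\'evy measure $\widetilde\nu$ and Laplace exponent $\widetilde\psi(\beta)=\psi(\beta)-c''\beta=\beta^2\cL\bar{\bar\nu}(\beta)$. Since $\EE\widetilde L_1=0$, the process $\widetilde L$ oscillates, hence $\widetilde\psi(\beta)>0$ for $\beta>0$ and $\Phi(0)=0$; by \eqref{def-scalefunction}, $\cL[\widetilde W](\beta)=1/\widetilde\psi(\beta)=1/(\beta^2\cL\bar{\bar\nu}(\beta))$ for all $\beta>0$. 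Write $\rho\in L^1_\loc(\real_+)$ for the resolvent of $\bar{\bar\nu}$ supplied by Lemma~\ref{lem-density}, so that $\rho*\bar{\bar\nu}=\bone$ and, by \eqref{lap-resolvent}, $\cL[\rho](\beta)=1/(\beta\cL\bar{\bar\nu}(\beta))$. Then $\cL[\widetilde W](\beta)=\beta^{-1}\cL[\rho](\beta)=\cL[\bone*\rho](\beta)$, and uniqueness of the Laplace transform gives $\widetilde W=\bone*\rho$; in particular $\widetilde W\in AC[0,\infty)$, $\widetilde W(0+)=0$, and $\widetilde W'=\rho$ almost everywhere, so that $\widetilde W'*\bar{\bar\nu}=\bone$.

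It remains to produce a continuous representative of $\widetilde W'$. Because $\widetilde L$ has the same jumps as $L$, the hypothesis $\int_{(0,1)} y\,\nu(\diff y)=\infty$ forces $\widetilde L$ to have paths of unbounded variation, and for such processes the scale function is continuously differentiable on $(0,\infty)$ — this is precisely the unbounded-variation case of the smoothness results of Chan, Kyprianou \& Savov \cite{chan11} (see also \cite{kypri14, kuzne13}). Hence $\widetilde W\in\cC^1(0,\infty)$, its pointwise derivative is a continuous representative of $\rho$ on $(0,\infty)$, and replacing $\rho$ by this version leaves the convolution identity $\rho*\bar{\bar\nu}=\bone$ untouched. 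Thus $\rho=\widetilde W'\in\cC(0,\infty)$ is a resolvent of $\bar{\bar\nu}$, as claimed. (As a consistency check one may instead argue directly from \eqref{eq-LapDif}: applying it to $\widetilde W\in AC[0,\infty)$ with $\widetilde W(0+)=0$ gives $\cL[\widetilde W'](\beta)=\beta\cL[\widetilde W](\beta)=1/(\beta\cL\bar{\bar\nu}(\beta))$, whence $\cL[\widetilde W'*\bar{\bar\nu}](\beta)=1/\beta=\cL[\bone](\beta)$ by \eqref{eq-lapl_convo}.)

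Everything here is Laplace-transform bookkeeping resting on Lemma~\ref{lem-ExpRep}, Lemma~\ref{lem-density} and \eqref{eq-LapDif}; the only external ingredient — and the one point I would state with care — is the $\cC^1$-regularity of scale functions in the unbounded-variation regime quoted from \cite{chan11}. It is worth noting that this regularity is needed \emph{only} for the continuity assertion $\rho\in\cC(0,\infty)$: the identity $\rho*\bar{\bar\nu}=\bone$ with $\rho=\widetilde W'$ already follows from $\widetilde W=\bone*\rho$ together with Lemma~\ref{lem-density} alone.
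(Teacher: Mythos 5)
Your argument is correct and essentially the same as the paper's: the paper computes $\widetilde\psi(\beta)=\beta^2\cL[\bar{\bar\nu}](\beta)$ via Lemma~\ref{lem-ExpRep}~\ref{lem-ExpRep-ii}, applies \eqref{eq-LapDif} with $\widetilde W(0+)=0$ to get $\cL[\widetilde W'](\beta)\cL[\bar{\bar\nu}](\beta)=1/\beta$ (exactly your parenthetical ``consistency check''), and cites \cite[Lemma~2.4]{kuzne13} for $\widetilde W\in\cC^1(0,\infty)$. Your detour through Lemma~\ref{lem-density} and the identity $\widetilde W=\bone*\rho$ is a harmless variation, and your appeal to the unbounded-variation smoothness result of \cite{chan11} plays the same role as the paper's citation of \cite{kuzne13}.
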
	
\begin{proof}
	From Lemma~\ref{lem-ExpRep} \ref{lem-ExpRep-ii} we know that the Laplace exponent of $(\widetilde{L}_t)_{t\geq 0}$ is given by
	\begin{gather*}
		\widetilde{\psi}(\beta)
		= \psi(\beta) - \EE[L_1] \beta
		= c''\beta + \beta^2 \cL[\bar{\bar\nu}](\beta) - c'' \beta
		= \beta^2 \cL[\bar{\bar\nu}](\beta).
	\end{gather*}
	By the definition of the scale function, we see $\cL \widetilde{W} = (\beta^2\cL\bar{\bar\nu})^{-1}$, and with \eqref{eq-LapDif} it follows that
	\begin{align*}
		\cL [\widetilde{W}'](\beta) = \frac 1{\beta\cL[\bar{\bar\nu}](\beta)}
		\iff \cL [ \widetilde{W}'](\beta)\cdot \cL[\bar{\bar\nu}](\beta) = \frac 1{\beta},
	\end{align*}
    since $\widetilde{W}(0 +)=0$. Finally, by \cite[Lemma 2.4]{kuzne13} we have $\widetilde W\in \cC^1(0,\infty)$, and thus $\rho\in\cC(0,\infty)$.
\end{proof}

The above considerations prove that in the setting of Theorem \ref{thm-unbdd} there always exists a resolvent of the integrated tail function $\bar{\bar\nu}$. This immediately leads to the following simple corollary of Theorem \ref{thm-unbdd}.

\begin{corollary}\label{cor-unbdd}
    Let $(L_t)_{t\geq 0}$ be as in Theorem~\ref{thm-unbdd} and let $\rho\in L^1_{\loc}(\real_+)$ be a resolvent of $\bar{\bar\nu}$. Then the $q$-scale function of $(L_t)_{t\geq 0}$ is given by
	\begin{align}\label{eq-rho_series}
		W^{(q)} = R*\sum_{n=0}^\infty (qR-c''\rho)^{*n},\quad q\geq 0,
	\end{align}
    where $R(x) := \int_0^x \rho(y)\,\diff y = \bone\ast\rho (x)$ for all $x\in\real_+$. The series on the right-hand side of \eqref{eq-rho_series} converges uniformly on compact subsets of $\real_+$.
\end{corollary}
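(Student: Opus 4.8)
The plan is to deduce Corollary~\ref{cor-unbdd} as an immediate specialization of Theorem~\ref{thm-unbdd}, obtained by taking the auxiliary function $h$ in that theorem to be the resolvent $\rho$ of $\bar{\bar\nu}$. Such a $\rho$ exists in $L^1_\loc(\real_+)$ by hypothesis (and, concretely, by Lemma~\ref{lem-density} or Lemma~\ref{lem-compensated}), so the construction is legitimate.

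First I would verify that $h:=\rho$ meets all the requirements of Theorem~\ref{thm-unbdd}. By assumption $\rho\in L^1_\loc(\real_+)$. Since $\rho$ is a resolvent of $\bar{\bar\nu}$, we have $\rho*\bar{\bar\nu}(x)=1$ for every $x\geq 0$, i.e.\ $\rho*\bar{\bar\nu}\equiv\bone$ as functions on $\real_+$. In particular $\partial(\rho*\bar{\bar\nu})\equiv 0\in L^1_\loc(\real_+)$, and $\rho*\bar{\bar\nu}(0+)=1$, which are exactly the standing assumptions on $h$ in Theorem~\ref{thm-unbdd}.

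Then I would read off the conclusion of Theorem~\ref{thm-unbdd} with $h=\rho$. The primitive appearing there becomes $H(x)=\int_{(0,x)}\rho(y)\,\diff y=\bone*\rho(x)=R(x)$, and because $\partial(\rho*\bar{\bar\nu})\equiv 0$ the function $f$ in Theorem~\ref{thm-unbdd} collapses to
\[
f(x)=qH(x)-c''h(x)-\partial(h*\bar{\bar\nu})(x)=qR(x)-c''\rho(x),\qquad x\geq 0.
\]
Substituting $H=R$ and this $f$ into \eqref{eq-h_series} gives
\[
W^{(q)}=R*\sum_{n=0}^\infty f^{*n}=R*\sum_{n=0}^\infty\bigl(qR-c''\rho\bigr)^{*n},\qquad q\geq 0,
\]
which is precisely \eqref{eq-rho_series}; the uniform convergence of the series on compact subsets of $\real_+$ is inherited verbatim from Theorem~\ref{thm-unbdd}.

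There is essentially no obstacle here beyond bookkeeping; the only point worth a word is that $\rho*\bar{\bar\nu}=\bone$ is an identity of functions on \emph{all} of $\real_+$, so that the boundary value at $0+$ is exactly $1$ and the derivative is exactly the zero function — this is exactly what Lemma~\ref{lem-density} provides, and it is what makes the term $\partial(h*\bar{\bar\nu})$ in $f$ disappear.
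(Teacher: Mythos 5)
Your proposal is correct and is exactly the argument the paper intends: the corollary is stated as an immediate consequence of Theorem~\ref{thm-unbdd}, obtained by taking $h=\rho$ so that $h*\bar{\bar\nu}\equiv\bone$ forces $\partial(h*\bar{\bar\nu})\equiv 0$ and $h*\bar{\bar\nu}(0+)=1$, collapsing $f$ to $qR-c''\rho$. Nothing is missing.
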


	The deep relationship between scale functions and renewal equations has been explained in detail in \cite{chan11}. From the definition of scale functions \eqref{def-scalefunction} one immediately obtains that for a spectrally negative L\'{e}vy process $(L_t)_{t\geq 0}$ with triplet $(c,2\sigma^2,\widetilde\nu)$ and $\int_{[1,\infty)}y\nu(\diff y)<\infty$
\begin{align}\label{eq_scale_eq}
c''W^{(0)} + \sigma^2 \partial W^{(0)} + \bar{\bar\nu}*\partial W^{(0)} = 1.
\end{align}
	In the case \(\sigma^2>0\) and with the notation of Remark \ref{rem-pk} this equation is a renewal equation on \(\real_+\) of the type
	\begin{align}\label{eq-renewal_1}
    f=1+f\circledast g
	\end{align}
	with \(f(x)=\sigma^2\partial W^{(0)}(x)\) and \(g(x)= -2\sigma^{-2} \int_0^x\bar{\bar\nu}(y)\diff y - 2c''\sigma^{-2}x\). This has been dicussed in \cite{chan11} where \eqref{eq-renewal_1} is solved for \(\partial ^{(0)}\) to infer smoothness properties of \(W^{(0)}\). However, if \(\sigma^2=0\) and \(\int_{(0,1)}y\nu(\diff y)=\infty\) then \eqref{eq_scale_eq} yields
	\begin{align*}
	c''W^{(0)} = 1 - \frac{\bar{\bar\nu}}{c''}*\partial (c''W^{(0)}) = 1 - \frac{\bar{\bar\nu}}{c''} \circledast (c''W^{(0)})
	\end{align*}
	which is not a renewal equation in the sense of \cite{chan11} since \(\bar{\bar\nu}(0+)\neq0\). Nevertheless, our approach to obtain series expansions for \(W^{(0)}\) in such cases still provides a technique to solve this kind of renewal equations on $\real_+$, namely
\begin{align}\label{eq-renewal}
    f=1+g*\partial f =1+ g\circledast f
\end{align}
where we do not necessarily assume that $g(0+)=0$. This is illustrated in the next theorem, whose proof follows the lines of the proof of Theorem \ref{thm-unbdd}.

\begin {theorem}\label{ren-eq}
    Let $g\in L^1_\loc(\real_+)$.
    \begin{enumerate}
    \item\label{ren-eq-i}
        If $g$ admits a resolvent $\rho$, then \eqref{eq-renewal} is solved by $f=\bone*\sum_{n=1}^\infty  (-1)^n \rho^{*n}$.
    \item\label{ren-eq-ii}
        If there exists a  \textup{(}not necessarily unique\textup{)}   function $h\in L^1_{loc}(\real_+)$ with $g~*~h (0+)=1$ and such that $g_h := \partial(g*h)$ is well-defined, then \eqref{eq-renewal} is solved by $$f = \bone*h*\sum_{n=1}^\infty (-1)^n(h+ g_h)^{*(n-1)}.$$
    \end{enumerate}
\end {theorem}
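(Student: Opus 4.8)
The strategy is to mimic, at the level of Laplace transforms, exactly what was done in the proof of Theorem~\ref{thm-unbdd}, treating the renewal equation \eqref{eq-renewal} as an abstract convolution identity. For part~\ref{ren-eq-i}: rewrite \eqref{eq-renewal} as $f = \bone + g*\partial f$. Taking Laplace transforms (assuming first that $g\in L^1(\real_+)$ and that $f$ has a Laplace transform in a suitable right half-line, which is justified a posteriori by uniqueness of the Laplace transform, cf.\ \cite[Thm.~6.3]{widde15}) and using \eqref{eq-LapDif} together with $f(0+)=1$ (read off from the equation, since $g*\partial f(0+)=0$), one gets $\cL f(\beta) = \beta^{-1} + \cL g(\beta)\big(\beta\cL f(\beta) - 1\big)$, hence $\cL f(\beta)\big(1 - \beta\cL g(\beta)\big) = \beta^{-1} - \cL g(\beta) = \beta^{-1}\big(1-\beta\cL g(\beta)\big)$; but this forces $\cL f = \beta^{-1}$, which is wrong — so instead one should work with the $\circledast$-form or, more robustly, use that $\rho$ is the resolvent, i.e.\ $\rho*g = g*\rho = $ (the function whose primitive behaves like the identity), and directly substitute $f = \bone*\sum_{n\ge 1}(-1)^n\rho^{*n}$ into $f = 1 + g\circledast f$ and verify the telescoping. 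Concretely: with $\rho$ a resolvent of $g$ one has $\rho \circledast g = \rho * g'$ type relations; the cleanest route is to compute $\cL[\rho]$ from the resolvent equation, express $\cL f$ as a geometric series in $\cL\rho$, and invert term by term, using the interchange-of-sum-and-Laplace-transform argument verbatim from the proof of Theorem~\ref{thm-brownian} (dominating the tail by $\sum_{n>m}\cL[|\rho|]^n$, which converges for large $\beta$), and Lemma~\ref{lem-ConPowDecay} for uniform convergence on compacts.

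\textbf{Part (ii).} Here I would repeat the proof of Theorem~\ref{thm-unbdd} almost word for word. Assume first $h\in L^1(\real_+)$ (reducing to $h_a = h\cdot\bone_{[0,a]}$ at the end, exactly as in Theorem~\ref{thm-unbdd}, since truncating $h$ beyond $a$ affects neither $g_h$ nor $f$ on $[0,a]$). Apply \eqref{eq-LapDif} to $g_h = \partial(g*h)$, using $g*h(0+)=1$, to get $\cL g_h(\beta) = \beta\cL g(\beta)\cL h(\beta) - 1$. Solve for $\beta\cL g(\beta) = \big(\cL g_h(\beta)+1\big)/\cL h(\beta)$. Substituting into the Laplace-transformed renewal equation $\cL f(\beta)\big(1 - \beta\cL g(\beta)\big) = \beta^{-1} - \cL g(\beta)$ and clearing denominators yields
\begin{align*}
    \cL f(\beta)\,\Big(\cL h(\beta) - \cL g_h(\beta) - 1\Big) = \beta^{-1}\cL h(\beta)\,\Big(1 - \cL h(\beta) - \cL g_h(\beta)\Big) \cdot (\text{correction}),
\end{align*}
so after the bookkeeping one arrives at $\cL f(\beta) = \beta^{-1}\cL h(\beta)\sum_{n\ge 1}(-1)^n\big(\cL h(\beta)+\cL g_h(\beta)\big)^{n-1}$, valid for $\beta$ large enough that $|\cL h(\beta)+\cL g_h(\beta)|<1$ (this holds since $h, g_h\in L^1_{\loc}(\real_+)$ and their truncated Laplace transforms tend to $0$). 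Inverting term by term — with the same interchange argument and Lemma~\ref{lem-ConPowDecay} — gives $f = \bone*h*\sum_{n\ge 1}(-1)^n(h+g_h)^{*(n-1)}$, using $\cL[\bone] = \beta^{-1}$ and $(h+g_h)^{*0} = \delta_0$.

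\textbf{Main obstacle.} The genuinely delicate point is the algebraic reduction of the Laplace-transformed equation to a clean geometric series: the renewal equation \eqref{eq-renewal} with $g(0+)\neq 0$ does not split as neatly as in the $\circledast$-with-$g(0+)=0$ case, and one has to be careful about which rearrangement actually produces a series whose $n$-th term is a \emph{convolution power} (as opposed to a rational-in-$\beta$ factor times a convolution power). I expect the correct identity to involve the factor $\beta^{-1}\cL h$ pulled out front precisely because $\bone*h = H$ plays the role that $R$ and $\id$ played before; verifying that the remaining bracket is exactly $\sum(-1)^n(\cL h + \cL g_h)^{n-1}$ — including getting the sign and the index shift right — is the crux. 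A secondary, but routine, issue is justifying that $f$ has a Laplace transform on a right half-line at all: this is handled, as throughout the paper, by noting that any $L^1_{\loc}(\real_+)$ solution is uniquely determined by its Laplace transform at large arguments, so it suffices to exhibit a function with the right transform and then invoke uniqueness; and that the series solution lies in $L^1_{\loc}(\real_+)$, which follows from the $L^1(\ee^{-\beta y}\,\diff y)$-convergence estimate as in Theorem~\ref{thm-brownian}. The truncation trick $h\rightsquigarrow h_a$ disposes of the integrability hypothesis on $h$, closing the argument.
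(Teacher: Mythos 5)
Your proposal has a genuine gap, and it is the same one in both parts: you take $f(0+)=1$, ``read off from the equation, since $g*\partial f(0+)=0$''. That inference fails precisely in the situation this theorem is designed for. The paper introduces \eqref{eq-renewal} explicitly for the case where $g(0+)\neq 0$ is allowed; then $g*\partial f(0+)$ need not vanish (e.g.\ $g(y)\sim y^{-\gamma}$ and $\partial f(y)\sim y^{\gamma-1}$ produce a nonzero Beta-function limit), and for the solution asserted in the theorem one has $f=\bone*u$ with $u\in L^1_{\loc}(\real_+)$, hence $f(0+)=0$ and necessarily $g*\partial f(0+)=-1$. With your boundary value the Laplace-transformed equation collapses --- as you yourself observe in part (i) --- to $\cL f(\beta)=\beta^{-1}$, i.e.\ to the trivial solution $f\equiv\bone$ (which does solve \eqref{eq-renewal}, since $\partial \bone=0$, so there is no contradiction to escape from). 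The same collapse occurs in part (ii) after you multiply through by $\cL h$; that is why your display needs an unexplained ``(correction)'' factor, and the final formula you then assert is not produced by the algebra you actually wrote down.

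The paper's proof avoids $\cL[\partial f]$, and hence the boundary term, altogether: it first notes $f(0+)=0$ from the form of the claimed solution, then convolves \eqref{eq-renewal} with $h$ \emph{in the time domain} and applies Lemma~\ref{lem-Conv-C1}~\ref{lem-Conv-C1-i} twice --- once with $f(0+)=0$ to get $h*g*\partial f=\partial(h*g*f)$, and once with $(h*g)(0+)=1$ to get $\partial\bigl((h*g)*f\bigr)=g_h*f+f$ --- arriving at $h*f=\bone*h+g_h*f+f$. Only then does it take Laplace transforms, which yields $\cL f=\cL[\bone*h]/(\cL h-\cL g_h-1)$ and a genuine geometric series to invert term by term. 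Moreover, part (i) is simply the case $h=\rho$ of part (ii), because $g*\rho\equiv\bone$ forces $g_\rho=\partial\bone=0$; the paper dispatches it with ``it is enough to prove (ii)'', whereas your part (i) ends in an unresolved sketch. (A further caveat: the correct computation gives $\cL f=-\cL[\bone*h]\sum_{n\geq0}(\cL[h-g_h])^n$, i.e.\ $f=-\bone*h*\sum_{n\geq0}(h-g_h)^{*n}$, which is what the paper's proof obtains and what is consistent with Theorem~\ref{thm-unbdd} and Corollary~\ref{cor-unbdd}; the signs in the printed statement of Theorem~\ref{ren-eq} do not agree with this, so the fact that your asserted answer matches the printed statement is not by itself evidence of correctness.)
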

\begin{proof}
It is enough to prove \ref{ren-eq-ii}. First note that $f(0+)=0$ since $f(x)=\int_{[0,x]}u(y)\,\diff y$ with locally integrable integrand $u(y):=h*\sum_{n=1}^\infty (-1)^n(h+ g_h)^{*(n-1)}$. Convolving both sides of \eqref{eq-renewal} with $h$, and applying Lemma \ref{lem-Conv-C1} \ref{lem-Conv-C1-i} yields
\begin{align*}
    h*f
    &=\bone*h+h*g*\partial f = \bone*h+\partial(h*g* f)= \bone*h + \partial(h*g)*f+f.
\end{align*}
Applying the Laplace transform and solving for $\cL[f](\beta)$ we get
\begin{align*}
    \cL[f](\beta)
    = \frac{\cL[\bone*h](\beta)}{\cL[h](\beta) - \cL[g_h](\beta) - 1}
    &= -\cL[\bone*h](\beta) \sum_{n=0}^\infty \left(\cL[h-g_h](\beta)\right)^n\\
    &= -\cL[\bone*h](\beta) \sum_{n=0}^\infty \cL[(h-g_h)^{*n}](\beta),
\end{align*}
from which the assertion follows due to the uniqueness of the Laplace transform.
\end{proof}

\subsubsection*{Examples}

Although the existence of a resolvent $\rho$ in Corollary~\ref{cor-unbdd} is clear, there seems to be no general method of constructing $\rho$ explicitly. Fortunately, there are quite a few cases where a suitable function $h$ (as in Theorem~\ref{thm-unbdd}) can be constructed. In particular, this is true if $\bar{\bar\nu}$ is regularly varying at $0$. Recall that a measurable function $f:(0,\infty)\to\real$ is regularly varying at $0$ with index $\gamma\in \real$, if
\begin{gather*}
    \lim_{x\to   0+}\frac{f(\lambda x)}{f(x)}=\lambda^{-\gamma} \quad
    \text{exists for all }\lambda >0.
\end{gather*}
We write $\mathcal R_\gamma$ for the space of regularly varying functions with index $\gamma$. Our standard reference is the monograph \cite{bingh11}.

 Let $(L_t)_{t\geq 0}$ be a spectrally negative L\'evy process as in Theorem~\ref{thm-unbdd}, and assume that $\bar{\bar\nu}\in\mathcal{R}_\gamma$ for some $\gamma\in(0,1)$. From \cite[Thm.~1.8.3]{bingh11} and its proof we know that there exists a completely monotone function $k$ such that $\lim_{x\to 0} \bar{\bar\nu}(x)/k(x)=~1.$ With the arguments of the previous subsection this implies that there exists a unique completely monotone function $h$  for which  $h*k=1$.
Set $k_\epsilon:=\bar{\bar\nu}-k$. Since   $\lim_{x\to0+}{k_\epsilon(x)}/{k(x)}=0$  , we obtain
\begin{equation}\label{eq-rvhelp1}\begin{aligned}
    |h*k_\epsilon(x)|
    &= \left| \int_{ (0,x)} h(x-y) k_\epsilon (y) \,\diff y \right|\\
    &\leq \int_{ (0,x)} h(x-y)\left|\frac{k_\epsilon(y)}{k(y)}\right|k(y)\,\diff y
    \leq \sup_{y\leq x} \left|\frac{k_\epsilon(y)}{k(y)}\right| \xrightarrow[x\to   0+]{}0,
\end{aligned}\end{equation}
hence,
\begin{equation}\label{eq-rvhelp2}
    h\ast \bar{\bar\nu}(0+)=1.
\end{equation}
We may thus use this function $h$ in Theorem \ref{thm-unbdd} to obtain the representation of the $q$-scale function given in the following corollary.

\begin{corollary}\label{cor-regvar-I}
    Let $(L_t)_{t\geq 0}$ be a spectrally negative L\'evy process as in Theorem~\ref{thm-unbdd} and assume that the L\'evy measure $\nu$ is such that $\bar{\bar\nu}\in\mathcal{R}_\gamma \cap \cC^1(0,\infty)$ for some $\gamma\in(0,1)$.  Let $k_\epsilon$ and $h$ be the functions in \eqref{eq-rvhelp1}. Then
    \begin{align*}
        W^{(q)}=H*\sum_{n=0}^\infty \big(qH-c''h -\partial(h*k_\epsilon)\big)^{*n}
    \end{align*}
    where $H(x) := \int_{(0,x)} h(y)\,\diff y = \bone\ast h (x)$ for all $x\in\real_+$.
\end{corollary}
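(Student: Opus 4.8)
The plan is to verify that the function $h$ constructed in the discussion preceding Corollary~\ref{cor-regvar-I} meets all the hypotheses of Theorem~\ref{thm-unbdd}, and then simply substitute $\bar{\bar\nu} = k + k_\epsilon$ into the formula \eqref{eq-h_series}. Concretely, Theorem~\ref{thm-unbdd} requires an $h \in L^1_{\loc}(\real_+)$ with $\partial(h*\bar{\bar\nu}) \in L^1_{\loc}(\real_+)$ and $h*\bar{\bar\nu}(0+) = 1$; once such an $h$ is in hand, the $q$-scale function is $W^{(q)} = H * \sum_{n=0}^\infty f^{*n}$ with $f = qH - c''h - \partial(h*\bar{\bar\nu})$ and $H = \bone * h$. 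So the task is really just (i) checking the three conditions on $h$, and (ii) rewriting $\partial(h*\bar{\bar\nu})$ in terms of $k_\epsilon$.

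First I would record why $h$ exists and lies in $L^1_{\loc}(\real_+)$: by \cite[Thm.~1.8.3]{bingh11} there is a completely monotone $k$ with $\bar{\bar\nu}(x)/k(x) \to 1$ as $x \to 0+$, and by the resolvent considerations of the preceding subsection (complete monotonicity of $k$ forces $\beta \cL k(\beta)$ to be a complete Bernstein function, whose reciprocal is a Stieltjes function, hence $\beta \cL[\rho_k](\beta)$ with $\rho_k$ a completely monotone density solving $\rho_k * k = 1$) there is a completely monotone $h$ with $h * k = \bone$, in particular $h * k (0+) = 1$ and $h \in L^1_{\loc}(\real_+)$. The identity $h*k \equiv \bone$ also gives $\partial(h*k) = \delta_0$ in the distributional sense but, more usefully, since $\bar{\bar\nu} \in \cC^1(0,\infty)$ by the hypothesis of the corollary, I would instead write $h * \bar{\bar\nu} = h * k + h * k_\epsilon = \bone + h*k_\epsilon$, so that $\partial(h*\bar{\bar\nu}) = \partial(h * k_\epsilon)$; the latter is in $L^1_{\loc}(\real_+)$ precisely because $k_\epsilon = \bar{\bar\nu} - k \in \cC^1(0,\infty)$ near $0$ and the convolution of two locally integrable functions has a locally absolutely continuous primitive (one can check this via the Laplace-transform identity \eqref{eq-LapDif} exactly as in the proof of Theorem~\ref{thm-unbdd}).

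Next I would verify $h * \bar{\bar\nu}(0+) = 1$. This is exactly the content of \eqref{eq-rvhelp1}--\eqref{eq-rvhelp2}: since $h * \bar{\bar\nu} = \bone + h * k_\epsilon$ and the estimate
\[
    |h * k_\epsilon(x)| \leq \sup_{y \leq x} \left| \frac{k_\epsilon(y)}{k(y)} \right| \cdot \int_{(0,x)} h(x-y) k(y)\,\diff y = \sup_{y \leq x} \left| \frac{k_\epsilon(y)}{k(y)} \right|
\]
tends to $0$ as $x \to 0+$ (using $h * k \equiv \bone$ for the last equality and $k_\epsilon(y)/k(y) \to 0$), we get $h * \bar{\bar\nu}(0+) = 1 + 0 = 1$. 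With all three hypotheses of Theorem~\ref{thm-unbdd} confirmed for this particular $h$, the theorem applies verbatim and yields $W^{(q)} = H * \sum_{n=0}^\infty f^{*n}$ with $f = qH - c''h - \partial(h*\bar{\bar\nu}) = qH - c''h - \partial(h*k_\epsilon)$, which is the claimed formula; the uniform convergence on compact subsets of $\real_+$ is also inherited directly from Theorem~\ref{thm-unbdd}.

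The only genuine subtlety—hence the main obstacle—is justifying that $\partial(h*k_\epsilon)$ is a bona fide $L^1_{\loc}$ function rather than merely a distribution, i.e.\ that $h * k_\epsilon$ is locally absolutely continuous. This is where the extra hypothesis $\bar{\bar\nu} \in \cC^1(0,\infty)$ (equivalently $k_\epsilon \in \cC^1(0,\infty)$ near $0$, since $k$ is smooth) is used: one writes $\partial(h * k_\epsilon) = h * \partial k_\epsilon + k_\epsilon(0+) h$ when $k_\epsilon(0+)$ is finite, or more carefully splits off a neighbourhood of $0$ as in Remark~\ref{rem-addcond} / the truncation argument at the end of the proof of Theorem~\ref{thm-unbdd} (replacing $h$ by $h_a = h\bone_{[0,a]}$) to reduce to the case $h \in L^1(\real_+)$ where the Laplace-transform computation is unambiguous. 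Everything else is a direct citation of Theorem~\ref{thm-unbdd} and the already-established facts \eqref{eq-rvhelp1}--\eqref{eq-rvhelp2}.
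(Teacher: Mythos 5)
Your overall strategy is exactly the paper's: verify the three hypotheses of Theorem~\ref{thm-unbdd} for the $h$ constructed from the regular-variation asymptotics, use $h*k\equiv\bone$ to rewrite $h*\bar{\bar\nu}=\bone+h*k_\epsilon$ and hence $\partial(h*\bar{\bar\nu})=\partial(h*k_\epsilon)$, and quote \eqref{eq-rvhelp1}--\eqref{eq-rvhelp2} for $h*\bar{\bar\nu}(0+)=1$. That part is fine.

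The one step you yourself single out as the genuine subtlety --- that $\partial(h*k_\epsilon)$ is an honest element of $L^1_\loc(\real_+)$ --- is not actually closed by either of the two routes you offer. The product rule $\partial(h*k_\epsilon)=h*\partial k_\epsilon+k_\epsilon(0+)h$ (Lemma~\ref{lem-Conv-C1}~\ref{lem-Conv-C1-i}) requires $k_\epsilon(0+)\in\real$, and this is \emph{not} guaranteed here: regular variation only gives $k_\epsilon=\bar{\bar\nu}-k=o(k)$ near $0$, and since $k(0+)=\bar{\bar\nu}(0+)=\infty$ in the unbounded-variation setting, $k_\epsilon$ may still diverge at the origin. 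Your fallback, truncating to $h_a=h\bone_{[0,a]}$, repairs a different problem (existence of the Laplace transform of $h$, as at the end of the proof of Theorem~\ref{thm-unbdd}); it does nothing to establish the differentiability or local integrability of $h*k_\epsilon$ near $0$. The paper closes this step with Lemma~\ref{lem-Conv-C1}~\ref{lem-Conv-C1-ii}, which is an interior regularity statement requiring no boundary values at $0+$: since $h\in\cC^\infty(0,\infty)$ (it is completely monotone) and $k_\epsilon\in\cC^1(0,\infty)$, with both functions in $L^1_\loc(\real_+)$, one gets $h*k_\epsilon\in\cC^1(0,\infty)$ directly, and then $h*k_\epsilon(0+)=0$ from \eqref{eq-rvhelp1} gives $\partial(h*k_\epsilon)\in L^1_\loc(\real_+)$. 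Replacing your two suggested justifications by this appeal to Lemma~\ref{lem-Conv-C1}~\ref{lem-Conv-C1-ii} makes your argument coincide with the paper's. (A minor aside: your remark that $h*k\equiv\bone$ gives $\partial(h*k)=\delta_0$ is only correct for the distributional derivative on all of $\real$; on $(0,\infty)$ the derivative is simply $0$. This does not affect the proof.)
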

\begin{proof}
    We check the conditions stated in Theorem \ref{thm-unbdd}. First of all, $h\in L_\loc^1(\real_+)$ as it is the resolvent of $k\in L^1_\loc(\real_+)$. Furthermore, we note that
    \begin{gather*}
        h\ast \bar{\bar\nu} = h\ast (k + k_\epsilon) = 1 + h\ast k_\epsilon,
    \end{gather*}
    which implies $\partial(h\ast \bar{\bar\nu}) = \partial (h\ast k_\epsilon)$. By assumption, $k_\epsilon\in \cC^1(0,\infty)$. Thus, $h*k_\epsilon\in\cC^1(0,\infty)$ by Lemma \ref{lem-Conv-C1} \ref{lem-Conv-C1-ii}, and as $h*k_\epsilon(0+)=0$ we obtain $\partial(h*k_\epsilon)\in L^1_{\loc}(\real_+)$. Finally, by \eqref{eq-rvhelp1}, $h\ast \bar{\bar\nu}(0+) = 1+ h\ast k_\epsilon(0+) = 1$ finishing the proof.
\end{proof}

In the following, we restrict ourselves to jump measures $\nu$ satisfying $\bar{\bar\nu}\in\mathcal{R}_\gamma$, $\gamma\in(0,1)$, and for which, in addition, there exists a constant $C>0$ such that
\begin{equation}\label{eq-condfraccalc}
    \lim_{x\to   0+}\frac{ Cx^{-\gamma}}{\bar{\bar\nu}(x)}=1.
\end{equation}
Clearly, these are special cases of the jump measures treated in Corollary \ref{cor-regvar-I}, but we can now use techniques from fractional calculus which will simplify our calculations. Recall that the Riemann--Liouville fractional integral $I_{0+}^\alpha f$ of order $\alpha\in(0,1)$ of a function $f:\real_+\to\real_+$ is defined by
\begin{gather*}
	(I_{0+}^\alpha f)(x)
    :=\frac1{\Gamma(\alpha)} \int_{ (0,x)} \frac{f(y)}{(x-y)^{1-\alpha}} \,\diff y =  \frac1{\Gamma(\alpha)}((\cdot)^{\alpha-1}*f)(x),\quad x\geq 0,
\end{gather*}
and the Riemann--Liouville fractional derivative  $D_{0+}^\alpha$ of order $\alpha\in(0,1)$ of a function $f:\real_+\to\real_+$ is defined by
\begin{gather*}
	(D_{0+}^\alpha f)(x):=\partial (I_{0+}^{1-\alpha} f)(x).
\end{gather*}
A thorough introduction to fractional calculus is the monograph \cite{samko93}.

\begin{corollary}\label{cor_frac}
    Let $(L_t)_{t\geq 0}$ be a spectrally negative L\'evy process as in Theorem~\ref{thm-unbdd} and assume that the L\'evy measure $\nu$ satisfies $\bar{\bar\nu}\in\cC^1(0,\infty)$ and \eqref{eq-condfraccalc} for some $C>0$ and let $\gamma\in(0,1)$. Set $h(x):= x^{\gamma-1}$ and $H(x):=\bone*h(x)= \gamma^{-1}x^{\gamma}$ for $x>0$. The $q$-scale function of $(L_t)_{t\geq 0}$ is given by
	\begin{align}\label{eq-frac}
		W^{(q)}
       = H*\sum_{n=0}^\infty \left(\frac{\sin(\gamma\pi)}{C\pi}\right)^{n+1} f^{*n}, \quad q\geq 0,
	\end{align}
	with $f(x):=qH(x)- c''h(x)- \Gamma(\gamma)D^{1-\gamma}_{0+} \bar{\bar\nu}(x)$ for $x>0$.
\end{corollary}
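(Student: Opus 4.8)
The plan is to deduce the statement from Corollary~\ref{cor-regvar-I} by taking there the \emph{explicit} completely monotone function $k(x):=Cx^{-\gamma}$, which by \eqref{eq-condfraccalc} is asymptotically equivalent to $\bar{\bar\nu}$ at $0$ and whose resolvent turns out to be a constant multiple of $x^{\gamma-1}$; the convolution operator that then appears is rewritten in the language of fractional calculus.

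First I would check that $k(x):=Cx^{-\gamma}$ is an admissible choice for the completely monotone function used in the construction preceding Corollary~\ref{cor-regvar-I}. Since $\gamma\in(0,1)$, the map $x\mapsto x^{-\gamma}$ is completely monotone, so $k\in\CM\cap L^1_{\loc}(\real_+)$; condition \eqref{eq-condfraccalc} says precisely that $\lim_{x\to 0+}\bar{\bar\nu}(x)/k(x)=1$, hence $\bar{\bar\nu}\in\mathcal{R}_\gamma$ and $k_\epsilon:=\bar{\bar\nu}-k$ satisfies $k_\epsilon(x)/k(x)\to 0$ as $x\to 0+$; finally $k_\epsilon=\bar{\bar\nu}-Cx^{-\gamma}\in\cC^1(0,\infty)$ because both summands are. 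These are exactly the properties of $k$ exploited in the proof of Corollary~\ref{cor-regvar-I}, so that proof carries over verbatim with this choice. It then remains to identify the resolvent of $k$: with $h(x)=x^{\gamma-1}$ and $H(x)=\gamma^{-1}x^{\gamma}=\bone*h(x)$ as in the statement, the substitution $y=xt$ together with the Beta integral and Euler's reflection formula yields
\begin{align*}
	(h*k)(x)=C\int_{(0,1)}(1-t)^{\gamma-1}t^{-\gamma}\,\diff t=C\,B(\gamma,1-\gamma)=C\,\Gamma(\gamma)\Gamma(1-\gamma)=\frac{C\pi}{\sin(\gamma\pi)},
\end{align*}
a constant. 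Hence, with $\kappa:=\sin(\gamma\pi)/(C\pi)$, the completely monotone function $\kappa h$ satisfies $(\kappa h)*k\equiv 1$ and is therefore the (unique, completely monotone) resolvent of $k$, and $\bone*(\kappa h)=\kappa H$.

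Next I would invoke Corollary~\ref{cor-regvar-I} with this $k$, so that the functions playing the roles of "$h$", "$H$" and "$k_\epsilon$" there are $\kappa h$, $\kappa H$ and $\bar{\bar\nu}-Cx^{-\gamma}$, respectively, giving
\begin{align*}
	W^{(q)}=(\kappa H)*\sum_{n=0}^\infty\bigl(q\,\kappa H-c''\,\kappa h-\partial\bigl((\kappa h)*k_\epsilon\bigr)\bigr)^{*n}.
\end{align*}
Since $(\kappa h)*k\equiv 1$ is constant, $\partial((\kappa h)*k_\epsilon)=\partial((\kappa h)*\bar{\bar\nu})$; moreover $h*\bar{\bar\nu}=((\cdot)^{\gamma-1}*\bar{\bar\nu})=\Gamma(\gamma)\,I_{0+}^{\gamma}\bar{\bar\nu}$ by the definition of the fractional integral, and $D_{0+}^{1-\gamma}=\partial I_{0+}^{\gamma}$, so $\partial((\kappa h)*\bar{\bar\nu})=\kappa\,\Gamma(\gamma)\,D_{0+}^{1-\gamma}\bar{\bar\nu}$. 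Therefore the bracket equals $\kappa\bigl(qH-c''h-\Gamma(\gamma)D_{0+}^{1-\gamma}\bar{\bar\nu}\bigr)=\kappa f$ with $f$ as in the statement, and pulling the scalars out of $\kappa H$ and out of every $n$-fold convolution power,
\begin{align*}
	W^{(q)}=\kappa H*\sum_{n=0}^\infty(\kappa f)^{*n}=H*\sum_{n=0}^\infty\kappa^{n+1}f^{*n},
\end{align*}
which is \eqref{eq-frac}. Uniform convergence on compact subsets of $\real_+$ is inherited from Corollary~\ref{cor-regvar-I}, hence ultimately from Theorem~\ref{thm-unbdd}.

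The argument involves no genuinely new analysis; the obstacles are organisational. The first is to recognise that the abstract completely monotone function from \cite[Thm.~1.8.3]{bingh11} may be replaced by the concrete $Cx^{-\gamma}$ and that its resolvent equals precisely $\kappa x^{\gamma-1}$ --- this is where the Beta function and the reflection formula enter, and one should confirm that $\kappa x^{\gamma-1}$ is itself completely monotone so that the uniqueness clause for the resolvent applies. The second is bookkeeping: carrying the single scalar $\kappa$ consistently through the primitive $H$ and through each convolution power $(\kappa f)^{*n}$ so that the factors recombine into the displayed $\kappa^{n+1}$, while also noting that $\partial(I_{0+}^{\gamma}\bar{\bar\nu})=D_{0+}^{1-\gamma}\bar{\bar\nu}$ lies in $L^1_{\loc}(\real_+)$, which is already guaranteed by the hypothesis $\bar{\bar\nu}\in\cC^1(0,\infty)$ via the proof of Corollary~\ref{cor-regvar-I}.
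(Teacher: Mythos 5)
Your proposal is correct and follows essentially the same route as the paper: the paper also takes $k(x)=Cx^{-\gamma}$, $h(x)=x^{\gamma-1}$, computes $h*k\equiv C\,B(1-\gamma,\gamma)=C\pi/\sin(\gamma\pi)$ via the Beta integral and reflection formula, rescales by $\kappa=\sin(\gamma\pi)/(C\pi)$, identifies $\partial(h*\bar{\bar\nu})=\Gamma(\gamma)D_{0+}^{1-\gamma}\bar{\bar\nu}$, and then applies Theorem~\ref{thm-unbdd} directly (your detour through Corollary~\ref{cor-regvar-I} is only a repackaging, since that corollary is itself just a verification of the hypotheses of Theorem~\ref{thm-unbdd}). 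The scalar bookkeeping producing $\kappa^{n+1}$ matches the paper's statement.
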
	
\begin{proof}
    Set $k(x)=C x^{-\gamma}$ such that $\lim_{x\to 0} \bar{\bar\nu}(x)/k(x)=~1$, and define $k_\epsilon:=\bar{\bar\nu}-k$. Clearly, $k_\epsilon = o(x^{-\gamma})$. Set $h(x)= x^{\gamma-1}$. The same computation as in \eqref{eq-rvhelp1} yields
	\begin{gather*}
        |h\ast k_\epsilon(x)|\to 0 \quad\text{as }x\to0.
    \end{gather*}
	Moreover,
	\begin{align*}
		h\ast k(x)
        &= \int_{(0,x)} h(x-y) k(y) \,\diff y
        = \int_{(0,1)}h(x(1-z)) k(xz) x \,\diff z\\
		&= C\cdot \int_{(0,1)} (1-z)^{\gamma-1} z^{-\gamma} \,\diff z
        = C\cdot B(1-\gamma,\gamma),
	\end{align*}
	where $B(x,y), x,y> 0$, denotes the beta function. Since $B(1-\gamma,\gamma) = {\pi}/{\sin(\gamma \pi)}$, we have
	\begin{gather*}
        h\ast \bar{\bar\nu} (0+)
        = h\ast k_\epsilon(0+) + h\ast k(0+)
        = C\frac{\pi}{\sin(\gamma \pi)}.
    \end{gather*}
    We may thus apply Theorem \ref{thm-unbdd} with $\widetilde{h}(x):=\frac{\sin(\gamma \pi)}{C\pi} h(x)$. Clearly, $\widetilde{h}\in L_\loc^1(\real_+)$ and the calculations above show $\widetilde{h}\ast \bar{\bar\nu} (0+)=1$. Further, $\partial(\widetilde{h}\ast \bar{\bar\nu}) \in L_\loc^1(\real_+)$, since
	\begin{gather*}
        \partial(h\ast \bar{\bar\nu}) (x)
        = \partial \big(\Gamma(\gamma) I_{0+}^{\gamma} \bar{\bar\nu}(x) \big)
        = \Gamma(\gamma)D^{1-\gamma}_{0+} \bar{\bar\nu}(x),
    \end{gather*}
    with $D^{1-\gamma}_{0+} \bar{\bar\nu}\in \cC^\gamma$ as $\bar{\bar\nu}$ in $\cC^1$ by assumption, cf.\ \cite[Lemma 2.2]{samko93}.
    This yields the representation of $W^{(q)}$.
\end{proof}	

\begin{example}[Spectrally negative stable processes]\label{example-3}
    Consider a spectrally negative stable L\'{e}vy process with stability index $\alpha\in(1,2)$ and Laplace exponent $\psi(\beta)=\beta^\alpha$. Then as shown in \cite{berto96}, see also \cite{hubal10}, for all $q\geq 0$
	\begin{align}\label{eq-ex3}
		W^{(q)}=\alpha x^{\alpha-1} E'_\alpha(qx^\alpha),\quad x\geq 0,
	\end{align}
    where $E_\alpha(x)=\sum_{k=0}^\infty {\Gamma(1+\alpha k)}^{-1} x^k$, $x\geq 0$, is the Mittag--Leffler function.

    A lengthy but otherwise straightforward calculation shows that this formula agrees with \eqref{eq-frac}: use that  $\bar{\bar\nu}$ fulfils \eqref{eq-condfraccalc} with $C= 1/{\Gamma(2-\alpha)}$ and $\gamma=\alpha-1$, and moreover  $\int^x_{0+} (x-y)^{\alpha-1}y^{-\alpha}\,\diff y = \pi/\sin(\alpha\pi)$ for any $x>0$ and $\alpha\in(0,1)$.
\end{example}

The following observation allows us to construct further explicit examples of scale functions. Consider a spectrally negative L\'{e}vy process $(L_t)_{t\geq0}$ with Laplace exponent $\psi_L$ and suppose that we know its scale functions $W^{(q)}_L$. Further, let $(C_t)_{t\geq0}$ be an independent compound Poisson process with positive jumps and without drift; we assume that the jumps have intensity $\lambda>0$ and are distributed according to the probability measure $\Pi$. Then, $(L_t-C_t)_{t\geq0}$ is a spectrally negative L\'evy process whose Laplace exponent is given by
\begin{align*}
	\psi_{L-C}(\beta)=\psi_L(\beta)+ \lambda  \cL[\Pi](\beta)-\lambda.
\end{align*}
As for any $\lambda$ we have $|\lambda \cL[\Pi](\beta)\cL[W_L^{(q+\lambda)}(\beta)|<1$ for $\beta$ large enough, the $q$-scale function of $(L_t-C_t)_{t\geq0}$ is thus defined by
\begin{align*}
    \cL[W_{L-C}^{(q)}]
    = \frac1{\psi_{L-C}-q}
    = \frac{\cL[W_L^{(q+\lambda)}]}{1+ \lambda  \cL[\Pi]\cL[W_L^{(q+\lambda)}]}
    = \cL[W_L^{(q+\lambda)}]\sum_{k=0}^\infty\left(- \lambda \cL[\Pi]\cL[W_L^{(q+\lambda)}]\right)^k.
\end{align*}
This immediately implies
\begin{align}\label{eq-independentSum}
	W^{(q)}_{L-C}=W_L^{(q+\lambda)}*\sum_{k=0}^\infty  \lambda^k  \left(-\Pi*W_L^{(q+\lambda)}\right)^{*k}.
\end{align}

\begin{example}[Stable process plus Poisson process]
    Let $(L_t)_{t\geq0}$ be a spectrally negative $\alpha$-stable processes with $\alpha\in(1,2)$ and let $\Pi=\delta_1$. For each $n\in\nat$ and $q\geq 0$ we then have from \eqref{eq-independentSum}
	\begin{align*}
        W^{(q)}_{L-C}(x)
        =W_L^{(q+\lambda)}*\sum_{k=0}^\infty  (-\lambda)^k  \left(W_L^{(q+\lambda)}(\cdot-1)\right)^{*k},
	\end{align*}
	where $W_L^{(q+\lambda)}$ is given by \eqref{eq-ex3}. In fact, the sum on the right-hand side is finite as $W_L^{(q+\lambda)}(x-1)=0$ for $x\in[0,1]$, and thus,
	\begin{gather*}
	   \left(W_L^{(q+\lambda)}(\cdot-1)\right)^{*k}(x)=0\quad\text{for all\ \ } k>x.
	\end{gather*}
\end{example}

\section{Smoothness properties}\label{S5}


The question of smoothness of $q$-scale functions is of considerable practical and theoretical interest. On the one hand, several fluctuation identities require the evaluation of derivatives of the $q$-scale functions, see  e.g.\ \cite[Section~1]{kuzne13}. On the other hand, cf.\ \cite{chan11, kypri10} and \cite[Section~3.5]{kuzne13}, one can interpret $W^{(q)}$ as an eigenfunction of (a suitable extension of) the infinitesimal generator $\mathcal{A}$ of $(L_t)_{t\geq 0}$, i.e.
\begin{gather*}
	(\mathcal{A}-q)W^{(q)} = 0.
\end{gather*}
However, this equation has to be treated with caution, since the domain of $\mathcal{A}$ has to be made explicit.

It has been conjectured by Doney, see \cite[Conjecture~3.13]{kuzne13}, that the smoothness of the scale functions depends on the smoothness of the jump measure. In the three different situations considered in Section~\ref{S4} Doney's conjecture reads as follows:
\begin{enumerate}
\item\label{doney-i}
    If $\sigma^2>0$, then $W^{(q)}\in \cC^{n+3}(0,\infty) \iff \bar{\nu} \in \cC^n(0,\infty)$ for all $n\in\nat, q\geq 0$.

\item\label{doney-ii}
    If $\sigma^2=0$ and $\int_{(0,1)} y\,\nu(\diff y)<\infty$, then  $W^{(q)}\in \cC^{n+1}(0,\infty) \iff \bar{\nu} \in \cC^n(0,\infty)$ for all $n\in\nat, q\geq 0$.

\item\label{doney-iii}
    If $\sigma^2=0$ and $\int_{(0,1)} y\,\nu(\diff y) = \infty$, then $W^{(q)}\in \cC^{n+2}(0,\infty) \iff \bar{\nu} \in \cC^n(0,\infty)$ for all $n\in\nat, q\geq 0$.
\end{enumerate}
Under additional assumptions, there are some partial answers. In \cite[Thm.~2]{chan11}, the equi\-valence in \ref{doney-i} is shown to be true if one assumes, in addition, that the Blumenthal--Getoor index of the pure-jump part satisfies $\inf \left\{ \alpha \geq 0: \int_{(0,1)} x^\alpha \,\nu(\diff x) <\infty \right\} < 2$. The equivalence in \ref{doney-ii} is shown in \cite[Thm.~3]{chan11} under the additional assumption that $\nu(\diff x)=\pi(x)\,\diff x$ such that $\pi(x)\leq C |x|^{-1-\alpha}$ for $\alpha<1$, $C>0$, in a neighbourhood of the origin.
To the best of our knowledge, the only case where \ref{doney-iii} is known to be true is that of a process whose L\'evy measure has a completely monotone density, cf.\ \cite{chan11}. In this section we use the series expansion of Theorem \ref{thm-unbdd} to prove more general smoothness properties for the scale functions of processes in case \ref{doney-iii}.

As one would expect, we have to impose constraints on the behaviour of the function $f$ appearing in Theorem \ref{thm-unbdd} near $x=0$, which resemble those used in the cases \ref{doney-i} and \ref{doney-ii}. More precisely, in order to derive smoothness properties of a convolution power series $\sum_{n=1}^\infty f^{*n}$, one first has to ensure the convergence of the series. This requires, however, that $f$ behaves nicely near the origin. In Theorem \ref{thm-unbdd}, the function $f$ is rather implicit as compared to those in Theorems \ref{thm-brownian} and \ref{thm-bounded} that refer to cases \ref{doney-i} and \ref{doney-ii}, respectively. This is the origin for the comparatively technical conditions in Theorem~\ref{thm-smoothness} below. The, nevertheless, considerable theoretical and practical use of our result will be demonstrated in Section \ref{sub-se-42}.

A key ingredient in the proof is the asymptotic behaviour of a function near the origin. To simplify notation, let us introduce for the function $f\in L^1_{\loc}(\real_+)$ the parameter $\kappa(f)\in \{1,2\}$ defined by
\begin{align*}
    \kappa(f)
    :=
    \begin{cases}
        2,&\text{if there are constants }C,\alpha>0\text{ such that } |f(x)|< Cx^{\alpha-0.5} \text{ for } x\in(0,1), \\
        1,&\text{otherwise}.
\end{cases}
\end{align*}

\begin {theorem}\label{thm-smoothness}
    Let $(L_t)_{t\geq 0}$ be a spectrally negative L\'evy process with triplet $(c,0,\widetilde\nu)$ such that $\int_{(0,1)} y\,\nu(\diff y) = \infty$ and $\int_{[1,\infty)} y\,\nu(\diff y) < \infty$. Let $\bar\nu\in\cC^{k_0}(0,\infty)$ for some $k_0\in\nat$. Further, assume that there exists a decomposition $\bar{\bar\nu}=N_1+N_2$  and some constants $C_1,C_2>0,   \alpha\in(0,1)  $ such that
	\begin{enumerate}
	\item\label{thm-smooth-i}
        $N_1\in L^1_{\loc}(\real_+)$ admits a resolvent $\rho_1\in\cC^1(0,\infty)$ such that $|\rho_1(x)|<C_{1}x^{\alpha-1}$ on $(0,1)$  and
	\item\label{thm-smooth-ii}
        $N_2\in\cC^{k_0+1}(0,\infty)$ with $N_2(0+) \in \real$ and $|\partial N_2(x)|<C_2x^{-\alpha}$ for $ x\in(0,1)$.
	\end{enumerate}	
	Then $W^{(q)}\in\cC^{k_0+\kappa(\rho_1)}(0,\infty)$ for all $q\geq0$.
\end {theorem}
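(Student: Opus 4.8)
The plan is to feed $h=\rho_1$ into Theorem~\ref{thm-unbdd} and then to propagate regularity through the resulting convolution power series, using hypotheses \ref{thm-smooth-i}, \ref{thm-smooth-ii} to keep the origin under control. Since differentiability is a local property on $(0,\infty)$ and $W^{(q)}(x)=\ee^{\Phi(q)x}W_{\Phi(q)}(x)$ with $\ee^{\Phi(q)x}\in\cC^\infty(\real)$, one may reduce to $q=0$ as announced in Section~\ref{sub-se-23}; under the Esscher transform the decomposition $\bar{\bar\nu}=N_1+N_2$ only gets modified by shifting the term $\bar{\bar\nu}_{\Phi(q)}-\bar{\bar\nu}$ into $N_2$, and this term is smooth on $(0,\infty)$ with a finite value and an $O(x^{-\alpha})$ derivative at $0$, so \ref{thm-smooth-i}, \ref{thm-smooth-ii} persist. (One could equally keep $q$ general; the additional summand $qH$ entering $f$ below is harmless, $H$ being $\cC^{k_0+2}(0,\infty)$.)

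Fix $q=0$. I would first check that $h:=\rho_1$ is admissible in Theorem~\ref{thm-unbdd}: $\rho_1\in L^1_{\loc}(\real_+)$ by assumption; from $\rho_1\ast N_1=\bone$ and $|\rho_1(x)|<C_1x^{\alpha-1}$, $N_2(0+)\in\real$, one gets $\rho_1\ast\bar{\bar\nu}=\bone+\rho_1\ast N_2$ with $|\rho_1\ast N_2(x)|\le C_1\alpha^{-1}\|N_2\|_{L^\infty[0,x]}x^{\alpha}\to0$, hence $\rho_1\ast\bar{\bar\nu}(0+)=1$; and $\partial(\rho_1\ast\bar{\bar\nu})=\partial(\rho_1\ast N_2)=N_2(0+)\rho_1+\rho_1\ast\partial N_2\in L^1_{\loc}(\real_+)$, the last convolution being a bounded Beta-type integral since the singular exponents $\alpha-1$ and $-\alpha$ are complementary. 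Theorem~\ref{thm-unbdd} then gives $W^{(0)}=H\ast\sum_{n\ge0}f^{\ast n}$ with $H:=\bone\ast\rho_1$ and
\[
  f=-c''\rho_1-\partial(\rho_1\ast\bar{\bar\nu})=-\bigl(c''+N_2(0+)\bigr)\rho_1-\rho_1\ast\partial N_2 ,
\]
equivalently $W^{(0)}=H+f\ast W^{(0)}$.

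The input $\bar\nu\in\cC^{k_0}(0,\infty)$ enters through the regularity bootstrap. Since $\partial\bar{\bar\nu}=-\bar\nu$ we have $\bar{\bar\nu}\in\cC^{k_0+1}(0,\infty)$, hence $N_1=\bar{\bar\nu}-N_2\in\cC^{k_0+1}(0,\infty)$ by \ref{thm-smooth-ii}; feeding this into the resolvent equation $\rho_1\ast N_1=\bone$ (a genuine Volterra identity away from $0$, where $N_1$ is smooth) and using standard Volterra resolvent regularity, cf.\ \cite{gripe10}, upgrades the hypothesis $\rho_1\in\cC^1(0,\infty)$ to $\rho_1\in\cC^{k_0+1}(0,\infty)$, so that $H=\bone\ast\rho_1\in\cC^{k_0+2}(0,\infty)\subseteq\cC^{k_0+\kappa(\rho_1)}(0,\infty)$. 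For the remaining part I would write $f=\rho_1\ast\mu$ with $\mu:=-(c''+N_2(0+))\delta_0-\partial N_2$, so that, by commutativity of convolution,
\[
  W^{(0)}-H=\rho_1^{\ast2}\ast g_0\ast\sum_{n\ge0}f^{\ast n},\qquad g_0:=\mu\ast\bone=-c''\bone-N_2\in\cC^{k_0+1}(0,\infty),\ \ g_0(0+)\in\real .
\]
Two factors of $\rho_1$ and one of $N_2$ have thereby been extracted from the singular part: convolution against $g_0$ (which is $\cC^{k_0+1}$ with finite boundary value) provides $k_0+1$ derivatives, and convolution against $\rho_1^{\ast2}$ (which near $0$ is of order $x^{2\alpha-1}$, hence locally integrable if $\kappa(\rho_1)=1$ and vanishing at $0$ if $\kappa(\rho_1)=2$) provides the extra $\kappa(\rho_1)-1$ derivatives. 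The tail $\sum_n f^{\ast n}$ is handled precisely as in the proof of Theorem~\ref{thm-unbdd}: after at most $k_0+\kappa(\rho_1)$ differentiations every summand $\rho_1^{\ast2}\ast g_0\ast f^{\ast n}$ still has an $L^1_{\loc}$ representative, and the bounds $|\rho_1(x)|<C_1x^{\alpha-1}$, $|\partial N_2(x)|<C_2x^{-\alpha}$ force every convolution occurring in the differentiated series to be a finite Beta integral, so that Lemma~\ref{lem-ConPowDecay} together with the convolution lemmas of Section~\ref{S3} yields uniform convergence of the differentiated series on compact subsets of $(0,\infty)$. This proves $W^{(q)}\in\cC^{k_0+\kappa(\rho_1)}(0,\infty)$.

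The main obstacle is this last step: one has to control infinitely many convolution powers at once while the two elementary singular building blocks, $\rho_1\sim x^{\alpha-1}$ and $\partial N_2\sim x^{-\alpha}$, carry singularities at the origin that become non-integrable after sufficiently many differentiations. The argument goes through because the exponents $\alpha-1$ and $-\alpha$ are exactly complementary: whenever a derivative lands on one factor, the convolution with the neighbouring factor is a convergent Beta integral and creates no new singularity, so the worst case is governed purely by the single quantity $\rho_1^{\ast2}$ near $0$ --- equivalently, by whether $|\rho_1(x)|$ is better or worse than $x^{-1/2}$ --- which is precisely the alternative recorded in the definition of $\kappa(\rho_1)$. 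Turning this bookkeeping into a rigorous interchange of differentiation and summation, with locally uniform bounds on the differentiated convolution powers, is the technical heart of the proof.
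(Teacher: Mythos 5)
Your overall strategy (feed a resolvent-type function into Theorem~\ref{thm-unbdd} and exploit the complementary exponents $\alpha-1$ and $-\alpha$) is the right one, but the proof breaks at the point where you take $h=\rho_1$ and then claim that ``standard Volterra resolvent regularity'' upgrades the hypothesis $\rho_1\in\cC^1(0,\infty)$ to $\rho_1\in\cC^{k_0+1}(0,\infty)$. No such standard result applies here: $\rho_1*N_1=\bone$ is a Volterra equation of the \emph{first} kind whose kernel $N_1=\bar{\bar\nu}-N_2$ blows up at the origin (because $\bar{\bar\nu}(0+)=\infty$ while $N_2(0+)$ is finite), so it cannot be converted into a second-kind equation and the classical regularity theory in \cite{gripe10} does not apply. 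Worse, the statement you are assuming is essentially the theorem itself: by Lemma~\ref{lem-compensated} such resolvents are derivatives of scale functions, and Corollary~\ref{cor-resolvent}~\ref{cor-resolvent-i} derives ``smoothness of the kernel implies smoothness of the resolvent'' \emph{from} Theorem~\ref{thm-smoothness}, not the other way round; Remark~\ref{rem-density} stresses that higher-order regularity of these resolvents is precisely the hard open problem. Your argument is therefore circular. The point matters because the term $(c''+N_2(0+))\rho_1$ sits inside $f$, hence inside every convolution power $f^{*n}$ and inside $W'=h*\sum_n f^{*n}$; with $\rho_1$ known only to be $\cC^1$ you cannot get past $W\in\cC^{2}$ (unless the coefficient $c''+N_2(0+)$ happens to vanish, which is the separate special case treated after Theorem~\ref{thm-smoothness}).

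The step you are missing is the paper's key construction: it does \emph{not} use $\rho_1$ itself but a function $h\in\cC^\infty(0,\infty)$ obtained in Lemmas~\ref{lem-contApprox} and~\ref{lem-h-properties} by smoothly approximating $\rho_1'$ (partition of unity plus Weierstra{\ss}), chosen so that $h/\rho_1\to1$ at $0$, $h*\bar{\bar\nu}(0+)=1$ and $\partial(h*\bar{\bar\nu})$ stays bounded near $0$. Since $h$ is $\cC^\infty$, the term $c''h$ is harmless and the entire smoothness burden falls on $\partial(h*\bar{\bar\nu})$, which is controlled by $\bar{\bar\nu}\in\cC^{k_0+1}(0,\infty)$. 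Two further soft spots: your claim that convolving with $g_0=-c''\bone-N_2$ ``provides $k_0+1$ derivatives'' overlooks that Lemma~\ref{lem-Conv-C1}~\ref{lem-Conv-C1-i} requires \emph{all} boundary values $\partial^jg_0(0+)$, $j\le k_0$, to be finite, whereas $\partial g_0=-\partial N_2$ may blow up like $x^{-\alpha}$; one derivative is free, but the remaining ones need the block mechanism of Corollary~\ref{corol_deriv}, namely $\partial^kf^{*n}=(\partial f^{*m})^{*k}*f^{*(n-km)}$ with $m=\lfloor2/\bar\alpha\rfloor+1$, which you gesture at but never set up. Likewise, the extra derivative in the case $\kappa(\rho_1)=2$ is not obtained by simply noting $\rho_1^{*2}(0+)=0$; it requires the separate binomial-expansion argument resting on $h*\partial(h*\bar{\bar\nu})(0+)=0$ (Lemma~\ref{lem-h-properties}~\ref{lem-h-iv}).
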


\begin{remark}\label{rem-cond-relax}
In the following we will, without loss of generality, assume $N_2(0+)=0$. Otherwise, if $N_2(0+)=:c_0\neq 0$, we set $d=c''+c_0$ and $\bar{\bar\mu}:=\bar{\bar\nu}-c_0$ such that
\begin{align}\label{eq-cond-relax}
    \psi(\beta)
    = c''\beta+\beta^2\cL[\bar{\bar\nu}]
    = (c''+c_0)\beta+\beta^2\cL[\bar{\bar\nu}-c_0]
    = d\beta+\beta^2\cL[\bar{\bar\mu}]
\end{align}
yields a representation of the Laplace exponent for which the conditions \ref{thm-smooth-i} and \ref{thm-smooth-ii} of Theorem \ref{thm-smoothness} are met. Note that, in general, $\bar{\bar\mu}$ is not an integrated tail function of any spectrally positive L\'{e}vy measure. However, this has no consequences for the proof.
\end{remark}

\subsection{Proof of Theorem \ref{thm-smoothness}}\label{sub-se-41}

We will use the series expansion of Theorem~\ref{thm-unbdd} in order to prove Theorem \ref{thm-smoothness}.
At first sight, this seems rather circumstantial in comparison with the representation in Corollary \ref{cor-unbdd}. The trouble is that for the convolution equation $\rho * f = 1$ rather little is known how the smoothness of $\rho$ and $f$ are related to each other, see Section \ref{S4c} above.

In order to apply Theorem \ref{thm-unbdd}, we need to construct a suitable function $h$. We are going to show that, in general, a suitable approximation of the resolvent of $N_1$  is good enough for our purposes. By assumption, the resolvent $\rho_1$ of $N_1$ is continuously differentiable. The following lemma will help us to approximate $\rho_1'$ and then construct a suitable function $h$ in the subsequent Lemma \ref{lem-h-properties}.

\begin{lemma}\label{lem-contApprox}
    Let $f\in\cC(0,\infty)$. For any strictly positive function $g\in\cC(0,\infty)$ there exists some $u\in\cC^\infty(0,\infty)$ such that $|f(x)-u(x)|<g(x)$ for all $x\in(0,\infty)$.
\end{lemma}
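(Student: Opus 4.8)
The plan is to realise $u$ as a patched-together family of local mollifications of $f$, glued by a smooth partition of unity, with the mollification scale chosen separately on each patch to beat the (locally bounded below) error budget $g$. Concretely, I would start by fixing the locally finite open cover $(0,\infty)=\bigcup_{j\in\ZZ}I_j$, $I_j:=(2^{j-1},2^{j+1})$, in which every point lies in at most two of the $I_j$, and choosing a subordinate $\cC^\infty$ partition of unity $\{\varphi_j\}_{j\in\ZZ}$ with $\varphi_j\geq 0$, $\sum_j\varphi_j\equiv 1$ on $(0,\infty)$, and $K_j:=\supp\varphi_j$ a compact subset of $I_j$ (hence a compact subset of $(0,\infty)$). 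Let $\eta\in\cC^\infty_c(\real)$ be a standard mollifier, $\eta\geq 0$, $\supp\eta\subset(-1,1)$, $\int\eta=1$, and write $\eta_\epsilon(y):=\epsilon^{-1}\eta(y/\epsilon)$.

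For each $j$, set $m_j:=\min_{x\in K_j}g(x)>0$ (positive by continuity of $g$ on the compact set $K_j\subset(0,\infty)$) and let $\delta_j:=\operatorname{dist}(K_j,\{0\})>0$. The compact set $L_j:=\{x+y:\,x\in K_j,\,|y|\leq \delta_j/2\}$ is contained in $(0,\infty)$, so $f$ is uniformly continuous on $L_j$; therefore one can pick $\epsilon_j\in(0,\delta_j/2)$ so small that $|f(x-y)-f(x)|<m_j$ whenever $x\in K_j$ and $|y|\leq\epsilon_j$. For such $x$ the convolution $f\ast\eta_{\epsilon_j}$ is well defined (the relevant values of $f$ lie in $L_j\subset(0,\infty)$), and $|f\ast\eta_{\epsilon_j}(x)-f(x)|\leq\int\eta_{\epsilon_j}(y)\,|f(x-y)-f(x)|\,\diff y<m_j$ for all $x\in K_j$. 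Now define $u:=\sum_{j\in\ZZ}\varphi_j\cdot(f\ast\eta_{\epsilon_j})$, where each summand is interpreted as $0$ off $K_j$. Since the cover is locally finite, on a neighbourhood of any point only finitely many terms are nonzero, so $u\in\cC^\infty(0,\infty)$. For any $x\in(0,\infty)$, writing $J(x):=\{j:\varphi_j(x)>0\}$ (a finite, nonempty set),
\[
|u(x)-f(x)|=\Bigl|\sum_{j\in J(x)}\varphi_j(x)\bigl(f\ast\eta_{\epsilon_j}(x)-f(x)\bigr)\Bigr|\leq\sum_{j\in J(x)}\varphi_j(x)\,|f\ast\eta_{\epsilon_j}(x)-f(x)|<\sum_{j\in J(x)}\varphi_j(x)\,m_j\leq\sum_{j\in J(x)}\varphi_j(x)\,g(x)=g(x),
\]
using $x\in K_j$ hence $m_j\leq g(x)$ for $j\in J(x)$, and that the middle inequality is strict because $J(x)\neq\emptyset$ and $\varphi_j(x)>0$ there. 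This is exactly the claimed estimate.

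The argument is entirely soft; there is no genuine obstacle, only bookkeeping. The one point that needs care is keeping the mollification radii $\epsilon_j$ small enough relative to $\delta_j$ so that $f\ast\eta_{\epsilon_j}$ never "sees" points outside $(0,\infty)$ on the set $K_j$ where it is actually used, and checking local finiteness so that $u$ is genuinely $\cC^\infty$ and the pointwise manipulations above are legitimate (finite sums). Both are handled by the dyadic cover and the choice $\epsilon_j<\delta_j/2$ above. (If one prefers to avoid invoking a partition of unity as a black box, the $\varphi_j$ can be built explicitly from translates and rescalings of a single fixed bump function adapted to the dyadic intervals $I_j$.)
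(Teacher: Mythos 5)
Your proposal is correct and follows essentially the same route as the paper: a locally finite dyadic cover of $(0,\infty)$, a subordinate smooth partition of unity, a local uniform approximation of $f$ on each patch with error below $\inf g$ over that patch, and the same gluing estimate $|\sum\varphi_j(f_j-f)|\leq\sum\varphi_j\,\inf_{K_j}g\leq g$. The only (immaterial) difference is the local approximant — you mollify $f$ at a patch-dependent scale, while the paper invokes Weierstra{\ss} to take polynomials on each $\overline{U_n}$ — and your version is in fact slightly more careful about obtaining the strict inequality.
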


\begin{proof}
    Let $\bigcup_{\lambda}U_\lambda = (0,\infty)$ be any cover of $(0,\infty)$ with open intervals. There is a locally finite, smooth subordinate partition of unity, i.e.\ a sequence $(\chi_n)_{n\geq 1}\subset\cC_c^\infty(0,\infty)$ such that $\supp\chi_n\subset U_{\lambda(n)}$ for some $\lambda(n)$, $\chi_n\geq 0$ and $\sum_{n=1}^\infty\chi_n(x) = \bone_{(0,\infty)}(x)$ such that for every compact set $K$ and $x\in K$ the sum only consists of finitely many summands, see e.g. \cite[Thm.~6.20]{rudin91}.

    Choose $U_n := (2^{n-1},2^{n+1})$, $n\in\integer$. Since $g$ is continuous and strictly positive, $\epsilon_n := \inf_{\overline U_n}g > 0$. By Weierstra{\ss}'s theorem, we can approximate any continuous function $f$ on $\overline U_n$ uniformly by a polynomial $p_n$ such that $\sup_{\overline U_n}|f-p_n|\leq\epsilon_n$. Therefore, we get
    \begin{align*}
        \Big|f - \sum_{n\in\integer}p_n\chi_n\Big|
        = \Big|\sum_{n\in\integer}(f-p_n)\chi_n\Big|
        \leq \sum_{n\in\integer}|f-p_n|\chi_n
        \leq \sum_{n\in\integer}\epsilon_n \chi_n
        \leq \sum_{n\in\integer} g \chi_n
        = g.
    \end{align*}
    Since the sum $\sum_{n\in\integer} \chi_n = \bone_{(0,\infty)}$ is locally finite, we see that $\sum_{n\in\integer} p_n\chi_n\in \cC_c^\infty(0,\infty)$.
\end{proof}

Lemma \ref{lem-contApprox} now leads to a smooth approximation $h$ of $\rho_1$ as follows.

\begin{lemma}\label{lem-h-properties}
Let $\bar{\bar\nu}=N_1+N_2$ be an integrated tail function as in Theorem~\ref{thm-smoothness}  and Remark \ref{rem-cond-relax}, and let $\rho_1$ be the resolvent of $N_1$.   In particular, we require that the assumptions \ref{thm-smooth-i}, \ref{thm-smooth-ii} of Theorem~\ref{thm-smoothness} hold. Then,   there exists a function $h\in\cC^\infty(0,\infty)$ such that
\begin{enumerate}
\item\label{lem-h-i}\ $\lim_{x\downarrow 0} h(x)/\rho_1(x)=1$,
\item\label{lem-h-ii} $h*\bar{\bar\nu}(0+)=1$,
\item\label{lem-h-iii} $\partial(h*\bar{\bar\nu})$ is bounded on $(0,1)$ and
\item\label{lem-h-iv}$h*\partial(h*\bar{\bar\nu})(0+)=0$.
\end{enumerate}
\end{lemma}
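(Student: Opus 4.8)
The starting observation is that if $\rho_1$ happened to be smooth, the choice $h:=\rho_1$ would already satisfy (i)--(iv). Indeed, since $\rho_1$ is a resolvent of $N_1$ we have $\rho_1*N_1\equiv 1$, so $h*\bar{\bar\nu}=\rho_1*N_1+\rho_1*N_2=1+\rho_1*N_2$; as $N_2(0+)=0$ (Remark~\ref{rem-cond-relax}) and $\rho_1\in L^1_{\loc}(\real_+)$, one gets $\rho_1*N_2(0+)=0$, which is (ii). Using $N_2(0+)=0$ again, $\partial(h*\bar{\bar\nu})=\partial(\rho_1*N_2)=\rho_1*\partial N_2$, and the bounds $|\rho_1(x)|<C_1x^{\alpha-1}$, $|\partial N_2(x)|<C_2x^{-\alpha}$ give
\[
    |\rho_1*\partial N_2(x)|\le C_1C_2\int_0^x (x-y)^{\alpha-1}y^{-\alpha}\,\diff y=\frac{C_1C_2\pi}{\sin(\alpha\pi)}\qquad(0<x<1),
\]
the same Beta-integral that occurs in the proof of Corollary~\ref{cor_frac}; this is (iii), and then (iv) follows because convolving a function bounded on $(0,1)$ with the locally integrable $\rho_1$ yields a function vanishing at $0$. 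The only defect is that $\rho_1$ is assumed merely $\cC^1$.

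To remove this defect I would apply Lemma~\ref{lem-contApprox} to $f:=\rho_1'\in\cC(0,\infty)$ with an error function $g\in\cC(0,\infty)$ that is strictly positive and, in addition, chosen in $L^1(0,1)\cap L^\infty(0,1)$; this furnishes $u\in\cC^\infty(0,\infty)$ with $|\rho_1'-u|<g$. Then set
\[
    h(x):=\rho_1(1)+\int_1^x u(t)\,\diff t-\int_0^1\!\big(\rho_1'(t)-u(t)\big)\,\diff t,\qquad x>0,
\]
where the last integral is finite because $|\rho_1'-u|<g\in L^1(0,1)$. Clearly $h\in\cC^\infty(0,\infty)$, and writing $e:=h-\rho_1$ a direct computation gives $e(x)=-\int_0^x\!\big(\rho_1'(t)-u(t)\big)\,\diff t$; hence $e$ is absolutely continuous on $(0,1]$ with $e(0+)=0$, $\sup_{(0,1)}|e|\le\int_0^1 g<\infty$, $|e(x)|\le\int_0^x g\to0$ as $x\downarrow0$, and $|e'|=|\rho_1'-u|<g$ on $(0,\infty)$.

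It remains to check (i)--(iv) for $h=\rho_1+e$. The contribution of $e$ to (ii) and (iv) is negligible: $|e*N_1(x)|\le(\sup_{(0,x)}|e|)\,\|N_1\|_{L^1(0,x)}\to0$ and $|e*N_2(x)|\le(\sup_{[0,x]}|N_2|)\,\|e\|_{L^1(0,x)}\to0$, so $h*\bar{\bar\nu}(0+)=1$, and once (iii) is established, $h*\partial(h*\bar{\bar\nu})(0+)=0$ follows again from $h\in L^1_{\loc}(\real_+)$. For (iii), $e(0+)=0$ and $N_2(0+)=0$ give $\bone*e'=e$ and $\bone*\partial N_2=N_2$, so, using $\rho_1*N_1\equiv1$,
\[
    \partial(h*\bar{\bar\nu})=\partial(e*N_1)+\partial(h*N_2)=e'*N_1+h*\partial N_2;
\]
the first term is bounded on $(0,1)$ by $\|g\|_{L^\infty(0,1)}\|N_1\|_{L^1(0,1)}$, and the second by splitting $h=\rho_1+e$ and combining the Beta-integral above with $\int_0^x|\partial N_2|\le C_2(1-\alpha)^{-1}x^{1-\alpha}$.

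The genuinely delicate assertion is (i). Here $h(x)/\rho_1(x)=1+e(x)/\rho_1(x)$ and, since $e(x)\to0$, one needs $e(x)=o(\rho_1(x))$, i.e.\ that $\rho_1$ does not become small near the origin. This must be extracted from the resolvent identity $\rho_1*N_1\equiv1$ together with $N_1(0+)=\bar{\bar\nu}(0+)=\infty$ and $N_1\in L^1_{\loc}(\real_+)$: if $\sup_{(0,x)}|\rho_1|$ stayed bounded, then $1=|\rho_1*N_1(x)|\le(\sup_{(0,x)}|\rho_1|)\int_0^x|N_1|\to0$ would be absurd, and upgrading this to $\rho_1(x)\to\infty$ as $x\downarrow0$ yields (i) (shrinking $g$ if needed so that $\int_0^x g$ is dominated by $|\rho_1(x)|$). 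I expect this last point to be the main obstacle; everything else reduces to the routine convolution and power estimates above.
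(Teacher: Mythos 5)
Your construction is, step for step, the paper's own: Lemma~\ref{lem-contApprox} applied to $\rho_1'$, followed by integration from $1$, produces exactly the function $h(x)=\rho_1(1)+\int_1^x u(t)\,\diff t-\int_0^1(\rho_1'(t)-u(t))\,\diff t$ used in the paper, and your verifications of \ref{lem-h-ii}--\ref{lem-h-iv} match the paper's up to cosmetics (the paper estimates $\partial(N_1*h)=\partial\bigl(N_1*(h-\rho_1)\bigr)$ by a difference-quotient computation where you write it as $e'*N_1$ with $|e'|<g$; the $N_2$-term is handled by the identical Beta-integral bound $C_1C_2B(1-\alpha,\alpha)$). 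The only difference is that the paper fixes the error function to $g(x)=x$ on $(0,1)$, which yields the quantitative estimate $|h(x)-\rho_1(x)|\le x^2/2$, whereas you keep a general $g\in L^1(0,1)\cap L^\infty(0,1)$; this changes nothing essential.

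The one point where you stop short is \ref{lem-h-i}, and your diagnosis of the difficulty is accurate: $|h-\rho_1|\to0$ gives $h/\rho_1\to1$ only if $\rho_1$ is bounded away from $0$ near the origin, and your argument from $\rho_1*N_1\equiv1$ and $\int_0^x|N_1(y)|\,\diff y\to0$ only yields $\limsup_{x\downarrow0}|\rho_1(x)|=\infty$, not $\liminf_{x\downarrow0}|\rho_1(x)|>0$. You should know that the paper does not do better at this point: it concludes asymptotic equivalence from $|h-\rho_1|\le x^2/2$ ``since $\rho_1(0+)\neq0$'', without justifying that claim. Two observations defuse the issue. First, every subsequent use of \ref{lem-h-i} (in the proof of \ref{lem-h-iii} and in the proof of Theorem~\ref{thm-smoothness}) only extracts the bounds $|h(x)|<Cx^{\alpha-1}$, respectively $\kappa(h)=\kappa(\rho_1)$, and for these the additive estimate $|h-\rho_1|\le x^2/2$ together with the assumed bound $|\rho_1(x)|<C_1x^{\alpha-1}$ already suffices; the full asymptotic equivalence is not load-bearing. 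Second, in the situations where the hypotheses are actually verified (e.g.\ Example~\ref{example-45}, where $\rho_1$ is completely monotone, hence positive and decreasing), one has $\rho_1(x)\ge\rho_1(1)>0$ on $(0,1)$ and \ref{lem-h-i} is immediate. So your proposal is essentially the paper's proof, with an honest flag on the one step the paper asserts rather than proves.
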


\begin{proof}
By assumption, $\rho'_1$ exists and is continuous. Using Lemma~\ref{lem-contApprox} we can find a smooth function $u\in\cC^\infty(0,\infty)$ such that $|u(x)-\rho_1'(x)|\leq x$ for all $x\in (0,1)$.

\medskip\noindent
\ref{lem-h-i}
    By construction, $u-\rho_1'\in L^1_\loc(\real_+)$, and so we may define
    \begin{align*}
        h(x):= - \int_{(x,1)} u(y)\,\diff y  + \int_{(0,1)} (u(y)-\rho_1'(y))\,\diff y + \rho_1(1).
    \end{align*}
    This choice results in
    \begin{align*}
        |h(x)-\rho_1(x)|
        = \left|\int_{(0,x)} \left(u(y)-\rho_1'(y)\right) \diff y\right|
        \leq \int_{(0,x)}  y\,\diff y
        = \frac 12 x^2,\quad x\in (0,1),
    \end{align*}
    which implies that $h$ and $\rho_1$ are asymptotically equivalent at zero, since $\rho_{1}(0+)\neq 0$.

\medskip\noindent
\ref{lem-h-ii} Note that  	
\begin{equation}\label{eq-proofsmooth01}
	\bar{\bar\nu}*h
	= N_1*h+N_2*h
	= N_1*(h-\rho_1)+ N_1*\rho_1 +N_2*h
	= N_1*(h-\rho_1)+ \bone+N_2*h.
\end{equation}
Thus the claim   follows from Lemma \ref{convo-asympt} \ref{convo-asympt-ii} together with the assumption $N_2(0+)=0$.

\medskip\noindent
\ref{lem-h-iii}
    Using dominated convergence we have for the right derivative
	\begin{align*}
    \lefteqn{|\partial_+(N_1*h)(x)| =|\partial_+(N_1*(h-\rho_1))(x)|}\\
        &\leq \left|\lim_{t\to 0}\int_{(0,x)}  \frac{h(x+t-y)-\rho_1(x+t-y)-h(x-y)+\rho_1(x-y)}t N_1(y)\,\diff y\right| \\
        &\quad\mbox{} + \left|\lim_{t\to0}\frac 1t\int_{[x,x+t)}h(x+t-y)-\rho_1(x+t-y)  N_1(y)\,\diff y\right|\\
        &\leq \int_{ (0,x)}  \left|h'(x-y)-\rho_1'(x-y)\right| |N_1(y)| \,\diff y \\ & \quad  + \lim_{t\to 0}\frac 1t\int_{(0,t]} \left|h(y)-\rho_1(y)\right|  |N_1(x+t-y)|\,\diff y. \\
        &\leq x\int_{(0,x)} |N_1(y)|\,\diff y +\lim_{t\to 0}\frac t2\int_{(0,t]} |N_1(x+t-y)|\,\diff y\\
        &= x\int_{(0,x)} |N_1(y)|\,\diff y +\lim_{t\to 0}\frac t2\int_{(0,t]} |N_1(x+z)|\,\diff z.
	\end{align*}
	Since $N_1\in L^1_\loc(\real_+)$, the right-hand side tends to zero as $x\to 0$, thus
	\begin{align*}
	   \lim_{x\to 0}\partial_+(N_1*h)(x)=0;
	\end{align*}
    a similar calculation for the left derivative shows  $\partial_-(N_1*h)=0$. Further, combining Lemma~\ref{lem-h-properties} \ref{lem-h-i} with the assumptions on $\rho_1$ shows that $|h(x)|<Cx^{\alpha-1}$ for $x\in(0,1)$. Thus the assumptions on $N_2$ and Lemma~\ref{convo-asympt}~\ref{convo-asympt-ii} ensure that
	\begin{align}\nonumber
        |\partial(N_2*h)(x)|&
        = |(\partial N_2)*h(x)|\\ &
        \leq C_1C_2\int_{(0,x)}(x-y)^{-\alpha}y^{\alpha-1}\,\diff y
        \leq C_1C_2 B(1-\alpha,\alpha)\label{eq-arg1}
	\end{align}
    for $x\in(0,1)$ and with $C_1,C_2,\alpha>0$ as in Theorem \ref{thm-smoothness}.
	
\medskip\noindent
    \ref{lem-h-iv} This follows from \ref{lem-h-iii} and the fact that $h\in L^1_{\loc}(\real_+)$. Indeed,
    \begin{gather*}
        \lim_{x\to0+}|h*\partial(h*\bar{\bar\nu})(x)|
        \leq \lim_{x\to0+} \left(\sup_{z\in(0,1)}|\partial(h*\bar{\bar\nu})(z)|\right) \int_{(0,x)}|h(y)|\,\diff y
        = 0.
    \end{gather*}
    This completes the proof.
\end{proof}

We are now ready to prove the main result of this section.

\begin{proof}[Proof of Theorem~\ref{thm-smoothness}]
    It suffices to consider the case $q=0$, as otherwise an exponential change of measure allows us to represent $W^{(q)}$ in terms of $W^{(0)}$   as stated in Section \ref{sub-se-23}.

    The function $h$ from Lemma~\ref{lem-h-properties} satisfies the requirements of Theorem \ref{thm-unbdd}. Therefore, we have the representation
    \begin{align*}
	   W: = W^{(0)} =\bone*h*\sum_{n=0}^\infty f^{*n},\quad q\geq 0,
    \end{align*}
    with $f(x) := c''h(x)-\partial(h*\bar{\bar\nu})(x)$ for $x\in (0,\infty)$; moreover,
    \begin{align}\label{eq-W'}
	   W'=h*\sum_{n=0}^\infty f^{*n}.
    \end{align}

    By assumption, $\bar\nu\in\cC^{k_0}(0,\infty)$. Hence, $\bar{\bar\nu}\in\cC^{k_0+1}(0,\infty)$, and, since $h\in L^1_{\loc}(\real_+)$,  Lemma~\ref{lem-Conv-C1} shows that $h*\bar{\bar\nu}\in\cC^{k_0+1}(0,\infty)$. Again with Lemma \ref{lem-Conv-C1} we see that $f^{*n}\in\cC^{k_0}(0,\infty)$ for all $n\in\nat\setminus\{0\}$. Thus, $W\in\cC^{k_0+1}(0,\infty)$ follows if we can show that that the series \eqref{eq-W'} converges in the space $\cC^{k_0}(0,\infty)$, i.e.\ locally uniformly with all derivatives up to order $k_0$.

    Lemma \ref{lem-h-properties} \ref{lem-h-i}, together   with the assumptions on $\rho_1$, shows that there exist constants $C,\alpha>0$ such that $|h(x)|<Cx^{\alpha-1}$ for $x\in(0,1)$. As $\partial(\bar{\bar\nu}*h)$ is bounded on $(0,1)$, see Lemma \ref{lem-h-properties} \ref{lem-h-iii}, there exist constants $\bar{C},\bar{\alpha}>0$ such that $|f(x)|<\bar{C}x^{\bar\alpha-1}$ for $x\in(0,1)$.

Hence, the assumptions of Corollary \ref{corol_deriv} are satisfied, and setting $m:=\lfloor 2/\bar\alpha\rfloor +1$ we have for all $k\leq k_0$ and $n\geq k_0m$
\begin{align*}
	\partial^k f^{*n}=(\partial f^{*m})^{*k}*f^{*(n-km)}.
\end{align*}
By Lemma~\ref{convo-asympt} \ref{convo-asympt-ii} we have $\partial^kf^{*(n+m)}(0)=0$ for all $k\leq k_0$. Thus, we can use Lemma~\ref{lem-Conv-C1} \ref{lem-Conv-C1-i}	to obtain
\begin{align*}
	\partial^k (h*f^{*(n+m)})
    = h*(\partial f^{*m})^{*k}*f^{*(n-(k-1)m)}.
\end{align*}

Returning to the series \eqref{eq-W'}, this means for $N> (k_0+1)m$
\begin{align}
    \partial^k \bigg(h*\sum_{n=0}^N f^{*n}\bigg)
    = \partial^k\bigg(h*\sum_{n=0}^{(k_0+1)m} f^{*n}\bigg) + h*\left(\partial f^{*m}\right)^{*k} * \sum_{\mathclap{n=(k_0+1)m+1}}^{N} f^{*(n-km)}.
\end{align}
The first term is independent of $N$. If we pull out $f^{*m}$ from the sum appearing in the second term, we see that
\begin{align*}
	h*(\partial f^{*m})^{*k}*\sum_{\mathclap{n=(k_0+1)m+1}}^{N} f^{*(n-km)}
    = h*(\partial f^{*m})^{*k}*f^{*m}*\sum_{\mathclap{n=k_0m+1}}^{N-m} f^{*(n-km)}.
\end{align*}
The first three convolution factors can be bounded by
\begin{align*}
	S:=\sup_{x\leq z} |h*(\partial f^{*m})^{*k}*f^{*m}(x)| < \infty \quad\text{for all $z>0$};
\end{align*}
here we use Lemma \ref{convo-asympt} and the continuity of $h$, $(\partial f^{*m})^{*k}$ and $f^{*m}$. Thus, with H\"{o}lder's inequality,
\begin{align*}
    \left|h*(\partial f^{*m})^{*k}*f^{*m}*\sum_{\mathclap{n=k_0m+1}}^{N-m} f^{*(n-km)}(x)\right|
    \leq S \sum_{n=k_0m+1}^{N-m} \left\|f^{*(n-km)}\right\|_{L^1[0, z]}
\end{align*}
for all $x\in[0,z]$. Lemma \ref{lem-ConPowDecay} implies that the sum on the right-hand side converges uniformly for all $x\in [0,z]$, and so the series
\begin{align*}
	\partial^k \bigg(h*\sum_{n=0}^\infty f^{*n}\bigg)
\end{align*}
converges locally uniformly in $(0,\infty)$. This proves $W\in\cC^{k_0+1}(0,\infty)$.

Assume now that $\kappa(\rho_1)=2$. Lemma \ref{lem-h-properties} \ref{lem-h-i} implies that $\kappa(h)=2$, as well. Thus, Lemma \ref{convo-asympt} gives $h^{*2}(0+)=0$. Plugging in the definition of $f$ into the series \eqref{eq-W'} shows that for $x>0$
\begin{align*}
	W'(x)&=\lim_{N\to\infty} h*\sum_{n=0}^N (c''h-\partial(h*\bar{\bar\nu}))^{*n}(x)\\
	&=\lim_{N\to\infty} \sum_{n=0}^N \sum_{\ell=0}^n (-1)^\ell \binom n\ell (c''h)^{*(n-\ell)}*h*(\partial(h*\bar{\bar\nu}))^{*\ell}.
\end{align*}
Each summand contains a factor of the form $h*(\partial(h*\bar{\bar\nu}))^{*\ell}$ for some $\ell\in\nat$, and we will now see how smooth it is. The case $\ell=0$ is trivial, as $h\in\cC^\infty(0,\infty)$. For $\ell=1$ we have, due to Lemma \ref{lem-Conv-C1},
\begin{align}\label{eq-smooth-induction}
	h*\partial(h*\bar{\bar\nu})=\partial(h^{*2}*\bar{\bar\nu})-h=\partial(h^{*2})*\bar{\bar\nu}-h \in\cC^{k_0+1}(0,\infty).
\end{align}
Finally, if $\ell\geq2$, we use
\begin{align*}
	h*(\partial(h*\bar{\bar\nu}))^{*\ell}=h*\partial(h*\bar{\bar\nu})*(\partial(h*\bar{\bar\nu}))^{*(\ell-1)}.
\end{align*}
With \eqref{eq-smooth-induction}, Lemma \ref{lem-h-properties} \ref{lem-h-iv}, and Lemma \ref{lem-Conv-C1} we obtain
\begin{align*}
    \partial\left(h*(\partial(h*\bar{\bar\nu}))^{*\ell}\right)
    = \partial\left(h*\partial(h*\bar{\bar\nu})\right)*(\partial(h*\bar{\bar\nu}))^{*(\ell-1)}
    \in\cC^{k_0}(0,\infty).
\end{align*}
This shows that each summand of
\begin{align*}
    \sum_{n=0}^N \sum_{\ell=0}^n(-1)^\ell  \binom n\ell (c''h)^{*(n-\ell)}*h*(\partial(h*\bar{\bar\nu}))^{*\ell}
\end{align*}
is $(k_0+1)$ times continuously differentiable on $(0,\infty)$. As in the first case we see that this smoothness in inherited by the series \eqref{eq-W'}. Consequently, $W\in\cC^{k_0+2}(0,\infty)$, and the proof is complete.
\end{proof}

\subsection{Discussion}\label{sub-se-42}

We begin the section with an example that illustrates Theorem \ref{thm-smoothness}.

\begin{example}\label{example-45}
    Denote by $(L^{(i)}_t)_{t\geq0}, i=1,2$ two independent spectrally negative L\'{e}vy processes with characteristic triplets $(c^{(i)},0,\widetilde\nu^{(i)}), i=1,2$. The characteristic triplet of the sum $L_t:=L^{(1)}_t+L^{(2)}_t$ is then $(c^{(1)}+c^{(2)},\widetilde\nu^{(1)}+\widetilde\nu^{(2)})$. For $i=1,2$, denote by $N_i$ the integrated tail function of the reflection $\nu^{(i)}$ of the L\'{e}vy measure of $L^{(i)}_t$.

   Let $N_1$ be completely monotone, and assume that there exist constants $C>0,   \alpha\in(0,1)  $ such that $N_1(x)\geq Cx^{-\alpha}$ for $x\in(0,1)$. By Corollary \ref{cor-cm-density}, $N_1$ admits a completely monotone resolvent $\rho_1\in\cC^\infty(0,\infty)$ for which
	\begin{align*}
		\rho_1(x) \frac{C}{1-\alpha}= x^{  \alpha-1  }\rho_1(x) \int_{(0,x)} Cy^{-\alpha}\,\diff y \leq \int_{(0,x)} \rho_1(x-y) N_1(y)\,\diff y =1
	\end{align*}
    for $x\in(0,1)$. Thus, $\rho_1(x)<\frac{1-\alpha}Cx^{\alpha-1}$ on $(0,1)$, and condition \ref{thm-smooth-i} in Theorem \ref{thm-smoothness} is met.

    This shows that the smoothness of the scale functions $W^{(q)}$ of the sum $L_t$ depends on the smoothness of $N_2$ only, i.e. on the L\'{e}vy measure of $L^{(2)}_t$. For example, $L^{(2)}_t$ could be such that $-L^{(2)}_t$ is a subordinator, and with L\'{e}vy measure $\nu^{(2)}$ such that $\bar\nu^{(2)}<C_2x^{-\alpha}$ on $(0,1)$ for some constant $C_2>0$. Condition \ref{thm-smooth-ii} of Theorem \ref{thm-smoothness} is then easily verified.

    We conclude that in this case, if $\bar\nu^{(2)}\in \cC^{k_0}(0,\infty) $ for $k_0\in\nat$, then $W^{(q)}\in\cC^{k_0+1}(0,\infty)$ and even $W^{(q)}\in\cC^{k_0+2}(0,\infty)$, if additionally $\alpha\geq \frac12$.
\end{example}

The following simple corollary of Theorem \ref{thm-smoothness} shows that the implication
\begin{align*}
	\bar\nu\in\cC^n(0,\infty) \implies W^{(q)}\in\cC^{n+\kappa(\rho_1)}(0,\infty)
\end{align*}
holds, if the $0$-scale function of the compensated process is at least two times continuously differentiable and of bounded growth.

\begin{corollary}\label{cor-comp}
    Let $(L_t)_{t\geq 0}$ be a spectrally negative L\'evy process with triplet $(c,0,\widetilde\nu)$ such that $\int_{(0,1)} y\,\nu(\diff y) = \infty$ and $\int_{[1,\infty)} y\,\nu(\diff y) < \infty$. Let $\bar\nu\in\cC^{k_0}(0,\infty)$ for some $k_0\in\nat$. Further, denote by $\widetilde W$ the $0$-scale function of the compensated process $\widetilde L_t:=L_t-\EE[L_t]$. If $\widetilde W\in\cC^2(0,\infty)$ and $|\widetilde W'(x)|<Cx^{\alpha-1}$ for some constants $C,\alpha>0$, then $W^{(q)}\in\cC^{k_0+\kappa(\widetilde W')}(0,\infty)$.
\end{corollary}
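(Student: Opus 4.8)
The plan is to obtain this as an immediate specialisation of Theorem~\ref{thm-smoothness}, using the trivial decomposition $\bar{\bar\nu}=N_1+N_2$ with $N_1:=\bar{\bar\nu}$ and $N_2:\equiv 0$, and identifying the resolvent $\rho_1$ of $N_1$ with the derivative $\widetilde W'$ of the scale function of the compensated process.

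First I would appeal to Lemma~\ref{lem-compensated}: since the standing hypotheses $\int_{(0,1)}y\,\nu(\diff y)=\infty$ and $\int_{[1,\infty)}y\,\nu(\diff y)<\infty$ are in force, $\rho:=\widetilde W'$ satisfies $\widetilde W'*\bar{\bar\nu}=\bone$, i.e.\ it is a resolvent of $\bar{\bar\nu}$. Under the additional assumption $\widetilde W\in\cC^2(0,\infty)$ we have $\rho=\widetilde W'\in\cC^1(0,\infty)$, which is precisely the regularity required of $\rho_1$ in hypothesis~\ref{thm-smooth-i}; moreover $\rho\in L^1_{\loc}(\real_+)$ because the bound $|\widetilde W'(x)|<Cx^{\alpha-1}$ is integrable at the origin.

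Next I would verify the growth conditions. The assumed estimate $|\widetilde W'(x)|<Cx^{\alpha-1}$ gives $|\rho_1(x)|<Cx^{\alpha-1}$ on $(0,1)$; if $\alpha\geq 1$ one simply replaces $\alpha$ by an arbitrary $\alpha'\in(0,1)$, since $x^{\alpha-1}\leq x^{\alpha'-1}$ on $(0,1)$, so without loss of generality $\alpha\in(0,1)$ as required in Theorem~\ref{thm-smoothness}. Hypothesis~\ref{thm-smooth-ii} holds trivially for $N_2\equiv 0$: it lies in $\cC^{k_0+1}(0,\infty)$, $N_2(0+)=0\in\real$, and $|\partial N_2(x)|=0<C_2x^{-\alpha}$ for any choice of $C_2>0$. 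Since $\bar\nu\in\cC^{k_0}(0,\infty)$ by hypothesis, all assumptions of Theorem~\ref{thm-smoothness} are met, and that theorem yields $W^{(q)}\in\cC^{k_0+\kappa(\rho_1)}(0,\infty)=\cC^{k_0+\kappa(\widetilde W')}(0,\infty)$ for every $q\geq 0$, which is the claim.

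I do not anticipate any genuine obstacle: the statement is a direct corollary of Theorem~\ref{thm-smoothness}, and the only bookkeeping points are the normalisation of $\alpha$ into $(0,1)$ and the use of Lemma~\ref{lem-compensated} to guarantee that $\widetilde W'$ really is a resolvent of $\bar{\bar\nu}$, which is what allows the entire $N_2$-part of the decomposition to be taken to be zero.
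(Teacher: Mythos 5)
Your proposal is correct and follows exactly the paper's own argument: invoke Lemma~\ref{lem-compensated} to identify $\widetilde W'$ as a resolvent of $\bar{\bar\nu}$, take the decomposition $N_1=\bar{\bar\nu}$, $N_2\equiv 0$, and apply Theorem~\ref{thm-smoothness}. The extra bookkeeping you supply (normalising $\alpha$ into $(0,1)$ and checking hypothesis~\ref{thm-smooth-ii} for $N_2\equiv 0$) is sound and merely makes explicit what the paper leaves implicit.
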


\begin{proof}
    By Lemma \ref{lem-compensated} $\widetilde{W}'$ is the resolvent of $\bar{\bar\nu}$. Thus, the assertion follows immediately from Theorem \ref{thm-smoothness} setting $N_2\equiv 0$.
\end{proof}

Both Theorem \ref{thm-smoothness}, and the two earlier mentioned results from \cite{chan11} on the smoothness of scale functions of spectrally negative L\'{e}vy processes with either a Gaussian component or paths of bounded variation, require a growth condition of the type
\begin{align*}
    |f(x)|<Cx^{\alpha-1},\quad x\in(0,1),
\end{align*}
for some constants $C,\alpha>0$ on one of the occurring terms. Getting rid of this requirement seems to be the biggest challenge in a possible proof of Doney's conjecture.

In this regard, the next corollary gives a hint how the growth condition on $\rho_1$ in Theorem \ref{thm-smoothness} can be avoided by imposing stronger conditions on the component $N_2$.

\begin{corollary}
    Let $(L_t)_{t\geq 0}$ be a spectrally negative L\'evy process with triplet $(c,0,\widetilde\nu)$ such that $\int_{(0,1)} y\,\nu(\diff y) = \infty$ and $\int_{[1,\infty)} y\,\nu(\diff y) < \infty$. Let $\bar\nu\in\cC^{k_0}(0,\infty)$ for some $k_0\in\nat$, and assume that there exists a decomposition $\bar{\bar\nu}=N_1+N_2$ and a constant $C_2>0$  such that
    \begin{enumerate}
        \item[$(\textup{i})$] $N_1\in L^1_{\loc}(\real_+)$ admits a resolvent $\rho_1\in\cC^1(0,\infty)$ and
        \item[$(\textup{ii})$] $N_2\in\cC^{k_0+1}(0,\infty)$ with $N_2(0+) =-c'' $ and $|\partial N_2(x)|<C_2$ for $ x\in(0,1)$.
    \end{enumerate}
    Then $W^{(q)}\in\cC^{k_0+\kappa(\rho_1)}(0,\infty)$ for all $q\geq0$.
\end{corollary}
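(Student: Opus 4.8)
The plan is to reduce the statement to (the proof of) Theorem~\ref{thm-smoothness}; the one new input is that the choice $N_2(0+)=-c''$ renders the growth condition $|\rho_1(x)|<C_1x^{\alpha-1}$ imposed there superfluous. As in Theorem~\ref{thm-smoothness} it suffices to treat $q=0$, the general case following from the exponential change of measure of Section~\ref{sub-se-23}. Write $\widetilde N_2:=N_2+c''$, so that $\bar{\bar\nu}=N_1+\widetilde N_2-c''$ with $\widetilde N_2\in\cC^{k_0+1}(0,\infty)$, $\widetilde N_2(0+)=0$ and $|\partial\widetilde N_2|=|\partial N_2|<C_2$ on $(0,1)$.

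First I would construct the auxiliary function $h$ exactly as in Lemma~\ref{lem-h-properties} \ref{lem-h-i}: since $\rho_1\in\cC^1(0,\infty)$, Lemma~\ref{lem-contApprox} yields $h\in\cC^\infty(0,\infty)$ with $|h(x)-\rho_1(x)|\le x^2/2$ on $(0,1)$; hence $h\in L^1_{\loc}(\real_+)$ and $h$ is asymptotically equivalent to $\rho_1$ at $0$ (note $\rho_1(0+)\ne0$, for otherwise $\rho_1*N_1(0+)=0\ne1$). This $h$ still satisfies the hypotheses of Theorem~\ref{thm-unbdd}: from $h*N_1=\bone+(h-\rho_1)*N_1$ with $(h-\rho_1)*N_1(0+)=0$, together with $h*\widetilde N_2(0+)=0$ and $\bone*h(0+)=0$ (Lemma~\ref{convo-asympt}), one gets $h*\bar{\bar\nu}(0+)=1$. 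The decisive point is the shape of the function $f$ of Theorem~\ref{thm-unbdd} for $q=0$: since $\partial(h*\bar{\bar\nu})=\partial(h*N_1)+(\partial\widetilde N_2)*h-c''h$,
\begin{align*}
    f=-c''h-\partial(h*\bar{\bar\nu})=-\partial(h*N_1)-(\partial\widetilde N_2)*h,
\end{align*}
so the term $c''h$ cancels. Now $\partial(h*N_1)$ is bounded on $(0,1)$ with limit $0$ at $0+$ by the argument for $N_1*h$ in the proof of Lemma~\ref{lem-h-properties} \ref{lem-h-iii} (which uses only $N_1\in L^1_{\loc}(\real_+)$), while $|(\partial\widetilde N_2)*h(x)|\le C_2\|h\|_{L^1[0,x]}\to0$. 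Hence $\partial(h*\bar{\bar\nu})\in L^1_{\loc}(\real_+)$, $f$ is bounded on $(0,1)$ with $f(0+)=0$, and $h*\partial(h*\bar{\bar\nu})(0+)=0$; i.e.\ the analogues of Lemma~\ref{lem-h-properties} \ref{lem-h-ii}--\ref{lem-h-iv} hold for this $h$.

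From here I would follow the proof of Theorem~\ref{thm-smoothness} verbatim. Theorem~\ref{thm-unbdd} gives $W:=W^{(0)}=\bone*h*\sum_{n\ge0}f^{*n}$ and $W'=h*\sum_{n\ge0}f^{*n}$. Since $\bar\nu\in\cC^{k_0}(0,\infty)$ we have $\bar{\bar\nu}\in\cC^{k_0+1}(0,\infty)$, so $h*\bar{\bar\nu}\in\cC^{k_0+1}(0,\infty)$ and $f\in\cC^{k_0}(0,\infty)$, hence $f^{*n}\in\cC^{k_0}(0,\infty)$, by Lemma~\ref{lem-Conv-C1}. As $f$ is bounded on $(0,1)$ we are in the case $\bar\alpha=1$ of the growth bound $|f(x)|<\bar C x^{\bar\alpha-1}$ used there, so Corollary~\ref{corol_deriv}, Lemma~\ref{convo-asympt} and Lemma~\ref{lem-ConPowDecay} apply unchanged and the series for $W'$ converges locally uniformly in $\cC^{k_0}(0,\infty)$; thus $W\in\cC^{k_0+1}(0,\infty)$. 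If $\kappa(\rho_1)=2$, then $\kappa(h)=2$ by the asymptotic equivalence, so $h^{*2}(0+)=0$ (Lemma~\ref{convo-asympt}), and, exactly as in \eqref{eq-smooth-induction}, $h*\partial(h*\bar{\bar\nu})=\partial(h^{*2})*\bar{\bar\nu}-h\in\cC^{k_0+1}(0,\infty)$; expanding $W'=h*\sum_{n\ge0}f^{*n}$ as in the proof of Theorem~\ref{thm-smoothness}, so that each summand carries a factor $h*(\partial(h*\bar{\bar\nu}))^{*\ell}$, and invoking this identity together with Lemma~\ref{lem-h-properties} \ref{lem-h-iv} and Lemma~\ref{lem-Conv-C1}, one finds that every summand lies in $\cC^{k_0+1}(0,\infty)$ and that the series converges there, whence $W\in\cC^{k_0+2}(0,\infty)$. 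In either case $W^{(0)}\in\cC^{k_0+\kappa(\rho_1)}(0,\infty)$.

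I expect the main obstacle to be purely technical bookkeeping: checking that the smooth perturbation $h$ of $\rho_1$ still satisfies every hypothesis of Theorem~\ref{thm-unbdd} and every property of Lemma~\ref{lem-h-properties} actually used in the proof of Theorem~\ref{thm-smoothness}, now \emph{without} any growth control on $\rho_1$. The single conceptual point is that taking $N_2(0+)=-c''$ cancels the term $c''h$ in $f$, so that $f$ is automatically bounded near the origin and takes over the role played in Theorem~\ref{thm-smoothness} by the hypothesis $|\rho_1(x)|<C_1x^{\alpha-1}$.
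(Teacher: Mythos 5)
Your proposal is correct and follows essentially the same route as the paper: reduce to $q=0$, use the shift $N_2\mapsto N_2+c''$ (the paper's Remark~4.3) to cancel the term $c''h$ in $f$, observe that $f=-\partial(h*(\bar{\bar\nu}+c''))$ is then bounded near $0$, and note that the growth condition on $\rho_1$ in Theorem~5.2 was only needed for the bound \eqref{eq-arg1} in Lemma~5.7(iii) and for Corollary~6.3 -- both of which now hold with $\bar\alpha=1$. The paper's proof is merely a more condensed version of the same bookkeeping you carry out.
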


\begin{proof}
    Without loss of generality we assume that $q=0$ and consider $W:=W^{(0)}$. By Remark \ref{rem-cond-relax} we can assume $N_2(0+)=0$ which then yields that the function $f$ in the series expansion \eqref{eq-W'} of $W'$ has to be replaced by
    \begin{equation} \label{eqfneu}
 	  f(x)= -\partial(h*(\bar{\bar\nu}+c''))(x).
    \end{equation}
    Thus, the term $c''h(x)$ vanishes due to the assumptions on $N_2$.

    We can now follow the lines of the proof  of Theorem \ref{thm-smoothness} and observe that the growth condition on $\rho_1$ appears at exactly two places:

    Firstly, in Lemma \ref{lem-h-properties} \ref{lem-h-iii} for the argument \eqref{eq-arg1}, which can now be replaced by
        \begin{align*}
            |\partial(N_2*h)(x)|
            = |\partial N_2*h(x)|\leq C
    \end{align*}
    due to Lemma \ref{convo-asympt}. Thus, Lemma \ref{lem-h-properties} \ref{lem-h-iii} still holds true, and therefore, $\partial(h*(\bar{\bar\nu}+c''))(x)$ is bounded on $(0,1)$. Note that $\bar{\bar\nu}+c''$ plays the role of $\bar{\bar\nu}$, see Remark \ref{rem-cond-relax}.

    Secondly, it is needed to apply Corollary \ref{corol_deriv} to the function $f$  in the series expansion of $W$. But the requirements of Corollary \ref{corol_deriv} are already met as $f$ stays bounded in $(0,1)$ due to the argument above.
\end{proof}

\noindent
Finally, we mention here that the close connection between scale functions and Volterra resolvents  of the first kind as dicussed in Section \ref{S4c} allows for a smoothness result on the latter as well.

\begin{corollary}\label{cor-resolvent}
	Let $f\in L^1_{\loc}(\real_+)$ be a positive, decreasing and convex function with $f(0+)=\infty$ such that $\partial^2 f(x)\,\diff x$ is a L\'evy measure. Denote by $\rho$ the resolvent of $f$.
	\begin{enumerate}
    \item\label{cor-resolvent-i}
        If $\rho\in\cC^1(0,\infty)$ and if there are constants $C,\alpha>0$ such that $|\rho(x)|<Cx^{\alpha-1}$ then
		\begin{align*}
			f\in\cC^n(0,\infty) \Rightarrow \rho \in \cC^{n-2+\kappa(\rho)}(0,\infty)
		\end{align*}		
		for all $n\geq2$.
	
    \item\label{cor-resolvent-ii}  If Doney's conjecture \ref{doney-iii} \textup{(}see p.~\pageref{doney-iii}\textup{)} is true, then
		\begin{align*}
			f\in\cC^n(0,\infty) \iff \rho \in\cC^n(0,\infty)
		\end{align*}
		for all $n\in\nat$.
	\end{enumerate}
\end{corollary}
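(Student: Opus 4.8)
The plan is to realise the resolvent $\rho$ as the derivative of a $0$-scale function, and then to invoke Theorem~\ref{thm-smoothness} for part~\ref{cor-resolvent-i} and Doney's conjecture~\ref{doney-iii} for part~\ref{cor-resolvent-ii}.

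First I would observe that the hypotheses on $f$ say precisely that $\psi_f(\beta):=\beta^2\cL f(\beta)$ is the Laplace exponent of a spectrally negative L\'evy process $L_f$ with $\sigma^2=0$, L\'evy measure $\nu(\diff x)=\partial^2 f(x)\,\diff x$ and drift parameter $f(\infty):=\lim_{x\to\infty}f(x)\in[0,\infty)$: setting $\bar{\bar\mu}:=f-f(\infty)$ — which, since $f$ is convex, decreasing and $\partial^2 f\,\diff x$ is a L\'evy measure, is the integrated tail function of $\partial^2 f\,\diff x$ (and has a decreasing tail $-\partial f$, so $\beta\cL\bar{\bar\mu}(\beta)$ is a Bernstein function) — one gets $\psi_f(\beta)=f(\infty)\beta+\beta^2\cL\bar{\bar\mu}(\beta)$, which has the shape of Lemma~\ref{lem-ExpRep}~\ref{lem-ExpRep-ii} (its first-moment assumption may be dropped as in Remark~\ref{rem-addcond}, or bypassed by noting that $\rho$ restricted to any interval $[0,M]$ depends on $f|_{[0,M]}$ only, and $f|_{[0,M]}$ can be continued affinely-then-constantly off $[0,M]$). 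Since $f(0+)=\infty$, the process $L_f$ has paths of unbounded variation, so its $0$-scale function $W_f$ satisfies $W_f(0+)=0$ and $W_f\in\cC^1(0,\infty)$; combining this with \eqref{eq-LapDif} and \eqref{def-scalefunction},
\begin{align*}
    \cL[\partial W_f](\beta)
    &= \beta\,\cL[W_f](\beta)
    = \frac{\beta}{\psi_f(\beta)}\\
    &= \frac{1}{\beta\cL f(\beta)}
    = \cL\rho(\beta),
\end{align*}
so that $\rho=\partial W_f$ by uniqueness of the Laplace transform; in particular $\rho\in\cC(0,\infty)$.

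For part~\ref{cor-resolvent-i} I would apply Theorem~\ref{thm-smoothness} to $L_f$ with the trivial decomposition $N_1:=f$, $N_2:=0$, which is legitimate by Remark~\ref{rem-cond-relax} because $\psi_f(\beta)=\beta^2\cL f(\beta)$ lets $f$ serve as the integrated tail with drift parameter $c''=0$. The resolvent of $N_1$ is then exactly $\rho$, so assumption~\ref{cor-resolvent-i} provides hypothesis~\ref{thm-smooth-i} of Theorem~\ref{thm-smoothness} (if the given $\alpha\geq1$ we may replace it by any $\alpha\in(0,1)$, which only weakens the bound on $(0,1)$ and does not affect $\kappa(\rho)$), while hypothesis~\ref{thm-smooth-ii} is immediate for $N_2\equiv0$. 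If $f\in\cC^n(0,\infty)$ with $n\geq2$, then the tail function $\bar\nu=-\partial f$ of $L_f$ lies in $\cC^{n-1}(0,\infty)$, i.e.\ $k_0=n-1$, and Theorem~\ref{thm-smoothness} gives $W_f\in\cC^{(n-1)+\kappa(\rho)}(0,\infty)$; differentiating once yields $\rho=\partial W_f\in\cC^{n-2+\kappa(\rho)}(0,\infty)$, as asserted. (This is in essence Corollary~\ref{cor-comp}; the possible presence of a nonzero $f(\infty)$ is why one routes through Remark~\ref{rem-cond-relax} rather than quoting Lemma~\ref{lem-compensated} verbatim.)

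For part~\ref{cor-resolvent-ii}, Doney's conjecture~\ref{doney-iii} applied to $L_f$ states that $W_f\in\cC^{m+2}(0,\infty)\iff\bar\nu\in\cC^m(0,\infty)$ for every $m\in\nat$. Using $\bar\nu=-\partial f$ (so $\bar\nu\in\cC^m\iff f\in\cC^{m+1}$) and $\rho=\partial W_f$ (so $W_f\in\cC^{m+2}\iff\rho\in\cC^{m+1}$), this reads $\rho\in\cC^n(0,\infty)\iff f\in\cC^n(0,\infty)$ for all $n\geq1$; the case $n=0$ holds automatically since $\rho\in\cC(0,\infty)$ and $f$, being convex, is continuous on $(0,\infty)$. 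The main obstacle is therefore not analytic — all the substance sits inside Theorem~\ref{thm-smoothness} — but rather the bookkeeping in the first two steps: correctly identifying $\rho$ with $\partial W_f$ when $f(\infty)\neq0$, and checking that $\partial^2 f\,\diff x$ being a L\'evy measure together with $f(0+)=\infty$ really places $L_f$ in the setting of Theorem~\ref{thm-unbdd}, including the harmless first-moment issue at infinity.
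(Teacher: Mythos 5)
Your proposal is correct and follows essentially the same route as the paper: it realises $\rho$ as $\partial W^{(0)}$ of the spectrally negative L\'evy process with Laplace exponent $\beta^2\cL f(\beta)$, applies Theorem~\ref{thm-smoothness} (with $N_1=f$, $N_2\equiv0$ via Remark~\ref{rem-cond-relax}) for part~\ref{cor-resolvent-i}, and translates Doney's conjecture~\ref{doney-iii} through $\bar\nu=-\partial f$ and $\rho=\partial W^{(0)}$ for part~\ref{cor-resolvent-ii}. The paper's proof is just a terser version of this argument; your extra care about the first-moment condition is in fact unnecessary (it is automatic since $f$ is decreasing with a finite limit at infinity, so the integrated tail $f-f(\infty)$ is finite), but harmless.
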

\begin{proof}
    The assumptions on $f$ ensure that $\partial^2 f(x)\,\diff x$ is the reflection of the L\'evy measure of a spectrally negative L\'evy process with paths of unbounded variation while the requirements for $\rho$ in \ref{cor-resolvent-i} imply that the spectrally negative L\'{e}vy process with Laplace exponent
	\begin{align*}
	   \psi(\beta)=\beta^2\cL[f](\beta)
	\end{align*}
    fulfils the assumptions of Theorem \ref{thm-smoothness}. Assertion \ref{cor-resolvent-i} now follows directly as $\rho=\partial W^{(0)}$ while \ref{cor-resolvent-ii} follows directly from Doney's conjecture without further assumptions on $\rho$.
\end{proof}

\section{Auxiliary results} \label{S3}


In this section we collect a few technical results and their proofs. We believe that this material  is  well-known, but we could not pinpoint suitable references, and so we decided to include the arguments for the reader's convenience.
\begin{lemma}\label{lem-Conv-C1}
Let $k\in\nat$.
\begin{enumerate}
\item\label{lem-Conv-C1-i}
    If $f\in \cC^{k+1}(0,\infty)$ be  such that $\partial^j f(0+)\in\real$ exists for all $j=0,1,\dots,k$,  and $g\in L^1_{\loc}(\real_+)\cap\cC^k(0,\infty)$, then $f*g\in \cC^{k+1}(0,\infty)$ and
	\begin{align}\label{eq-derivativeConvolution}
	   \partial^{\ell}(f*g)(x)
        = (\partial^\ell f)*g(x)+\sum_{j=0}^{\ell-1} \partial^{j} f(0+) \partial^{\ell-1-j} g(x),
        \; \ell\in\{1,\ldots, k+1\}.
	\end{align}
\item\label{lem-Conv-C1-ii}
    If $f,g\in L^1_{\loc}(\real_+)\cap\cC^k(0,\infty)$ then $f*g\in\cC^k(0,\infty)$.
\end{enumerate}	
\end{lemma}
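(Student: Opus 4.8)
The plan is to treat both parts as \emph{local} statements: it is enough to show that $f*g$ lies in the claimed smoothness class on an arbitrary compact subinterval of $(0,\infty)$, so I would fix $x_0>0$ and study $f*g$ for $x$ in a small neighbourhood of $x_0$. The only genuine difficulty is that $f$ and $g$ are assumed to be merely $L^1_{\loc}$ near the origin, so one cannot blindly differentiate under the integral sign there. The remedy I would use is to split the convolution integral at a point $a/2$ with $0<a<x_0$:
\[
    (f*g)(x)=\int_{(0,a/2)}f(x-y)\,g(y)\,\diff y+\int_{(0,x-a/2)}f(u)\,g(x-u)\,\diff u ,
\]
the second integral coming from the substitution $u=x-y$. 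In the first integral $x-y$ ranges over a compact subset of $(0,\infty)$, where $f$ and all its available derivatives are bounded; in the second integral $x-u$ ranges over a compact subset of $(0,\infty)$, where the same holds for $g$. Thus each piece is singular in only one of its two factors, and one may differentiate, transferring the derivatives onto whichever factor is smooth on the region in question.

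For part~\ref{lem-Conv-C1-ii} this already suffices. Differentiating the first integral under the integral sign puts $\partial^{j}$ on $f(x-\cdot)$; since $\partial^{j}f$ is bounded on the relevant compact set and $g\in L^1[0,a/2]$, dominated convergence shows $x\mapsto\int_{(0,a/2)}\partial^{j}f(x-y)g(y)\,\diff y$ exists and is continuous for $0\le j\le k$. For the second integral $\Phi(x):=\int_{(0,x-a/2)}f(u)g(x-u)\,\diff u$, the Leibniz rule for a moving endpoint gives
\[
    \partial_{x}\Phi(x)=f(x-a/2)\,g(a/2)+\int_{(0,x-a/2)}f(u)\,(\partial g)(x-u)\,\diff u ,
\]
with the interchange justified as before (now $\partial g$ is bounded on the relevant compact set and $f\in L^1[0,x-a/2]$). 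The boundary term $x\mapsto f(x-a/2)g(a/2)$ is $\cC^{k}$ because $x-a/2$ stays away from $0$, and the remaining integral has the same shape as $\Phi$ with $g$ replaced by $\partial g$. Iterating $k$ times — each step consuming one derivative of $g$ — yields $\Phi\in\cC^{k}$ near $x_{0}$, hence $f*g\in\cC^{k}(0,\infty)$.

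For part~\ref{lem-Conv-C1-i} the new ingredient is the first-order identity, valid for $x>0$,
\[
    \partial(f*g)(x)=f(0+)\,g(x)+(\partial f)*g(x),
\]
which I would prove directly from the difference quotient: $h^{-1}[(f*g)(x+h)-(f*g)(x)]$ splits as $h^{-1}\int_{(0,x)}[f(x+h-y)-f(x-y)]g(y)\,\diff y+h^{-1}\int_{(x,x+h)}f(x+h-y)g(y)\,\diff y$; the first summand tends to $(\partial f)*g(x)$ by the mean value theorem and dominated convergence, the required domination near $y=x$ being furnished by $\big\|\,|\partial f|*|g|\,\big\|_{L^1[0,x]}<\infty$ (Young's inequality~\eqref{eq-Young}, since $\partial f\in L^1_{\loc}(\real_+)$), while the second summand equals $h^{-1}\int_{(0,h)}f(h-u)g(x+u)\,\diff u\to f(0+)g(x)$ by continuity of $f$ at $0+$ and of $g$ at $x$. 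Since $\partial^{\ell}f$ extends continuously to $0$ for $0\le\ell\le k$ (so $\partial^{\ell}f\in L^1_{\loc}(\real_+)$), the identity may be applied to $(\partial^{\ell}f)*g$; combining this with the elementary fact $\partial\big(\partial^{j}f(0+)\,\partial^{\ell-1-j}g\big)=\partial^{j}f(0+)\,\partial^{\ell-j}g$ and re-indexing the sum, an induction on $\ell$ delivers \eqref{eq-derivativeConvolution} for all $1\le\ell\le k+1$, the base case $\ell=0$ being the continuity of $f*g$ established as in part~\ref{lem-Conv-C1-ii}; membership $f*g\in\cC^{k+1}(0,\infty)$ then follows by feeding $(\partial f)*g$ back into part~\ref{lem-Conv-C1-ii}.

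I expect the main obstacle to be bookkeeping rather than a single hard estimate: the real content is the \emph{uniform} justification of differentiation under the integral near the singularity at $0$, and the splitting device reduces this to two elementary facts — continuous functions are bounded on compact sets, and the convolution of two $L^1_{\loc}$ functions is again $L^1_{\loc}$ (Young's inequality~\eqref{eq-Young}). One minor caveat I would flag: if $\partial^{k+1}f\notin L^1_{\loc}(\real_+)$, the top-order term $(\partial^{k+1}f)*g$ in \eqref{eq-derivativeConvolution} has to be read as a conditionally convergent improper integral; an integration by parts shows this integral still converges, and the displayed identity is unaffected.
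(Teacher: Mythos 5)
Your argument is correct and follows essentially the same route as the paper: part~(i) is the iterated Leibniz/difference-quotient identity, and part~(ii) rests on splitting the convolution integral so that each piece contains only one factor that is singular at the origin, where the derivatives can then be placed on the smooth factor. The only cosmetic differences are that you use a two-way split with a moving endpoint where the paper uses a three-way split whose middle piece is reduced to part~(i) via the shifted functions $f_\delta,g_\delta$, and the caveat you flag about reading $(\partial^{k+1}f)*g$ as an improper integral when $\partial^{k+1}f\notin L^1_{\loc}(\real_+)$ concerns a borderline case that the paper's one-line proof of (i) glosses over as well.
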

\begin{proof}
\ref{lem-Conv-C1-i}
    Formula \eqref{eq-derivativeConvolution} follows with a straightforward calculation using integration by parts; since the terms on the right-hand side  are finite and continuous, we see that $f*g\in\cC^{k+1}(0,\infty)$.

\medskip\noindent
\ref{lem-Conv-C1-ii}
    Note that $f$ and $\partial^j f$ may have singularities at $x=0$, so that the formula \eqref{eq-derivativeConvolution} need not hold. We can, however argue as follows: let $x\geq 0$ and fix $\delta<x/2$. We have
	\begin{align*}
		f*g(x)
        =\int_0^\delta f(x-y)g(y)\,\diff y + \int_\delta^{x-\delta}f(x-y)g(y)\,\diff y + \int_{x-\delta}^xf(x-y)g(y)\,\diff y.
	\end{align*}
	The first and the third term are $k$ times continuously differentiable, while the second term can be written as
	\begin{align*}
		\int_\delta^{x-\delta}f(x-y)g(y)\,\diff y
        = \int_0^{x-2\delta} f(x-2\delta-y+\delta)g(y+\delta)\,\diff y
        = f_\delta*g_\delta(x-2\delta)
	\end{align*}
    for the shifted functions $f_\delta(\cdot):=f(\cdot+\delta)$ and $g_\delta(\cdot):=g(\cdot+\delta)$. By assumption, $f_\delta, g_\delta\in\cC^k(\real_+)$ are such that we can apply Part~\ref{lem-Conv-C1-i}, showing that $f_\delta*g_\delta\in\cC^k(0,\infty)$, and the claim follows.
\end{proof}

\begin{lemma}\label{convo-asympt}
    Let $f_1,f_2\in L^1_{\loc}(\real_+)$ be two functions such that $|f_1(x)|<C_1 x^{\alpha_1-1}$ for $x$ in a neighbourhood of zero and some constants $C_1,\alpha_1>0$.
	\begin{enumerate}
	\item\label{convo-asympt-i}
        If, additionally, $|f_2(x)|<C_2 x^{\alpha_2-1}$ for $x$ in a neighbourhood of zero and some constants $C_2,\alpha_2>0$, then $f_1*f_2$ exists and satisfies $|f_1*f_2(x)|<C x^{\alpha_1 +\alpha_2-1}$ for $x$ in a neighbourhood of zero and some constant $C>0$. 	
    \item\label{convo-asympt-ii}
        If $\alpha_1\geq1$, then $f_1*f_2$ exists and there exists a constant $C>0$ such that $|f_1*f_2(x)|<C  x^{\alpha_1-1}$ for all $x$ in a neighbourhood of zero.
	\end{enumerate}
\end{lemma}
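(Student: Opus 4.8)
The plan is to estimate the convolution integral directly and, in part~\ref{convo-asympt-i}, to reduce it to a Beta integral by rescaling. First I would fix $\delta>0$ small enough that the hypothesis $|f_1(x)|<C_1 x^{\alpha_1-1}$ holds for all $x\in(0,\delta)$ and, in the setting of~\ref{convo-asympt-i}, also $|f_2(x)|<C_2 x^{\alpha_2-1}$ for $x\in(0,\delta)$. Since the supports of $f_1$ and $f_2$ lie in $\real_+$, for $0<x<\delta$ we have $f_1*f_2(x)=\int_{(0,x)} f_1(x-y)f_2(y)\,\diff y$, and in this integral both arguments $x-y$ and $y$ range over $(0,\delta)$, so the pointwise bounds apply throughout; this is the only point where the locality of the hypotheses needs care.

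For part~\ref{convo-asympt-i} I would then bound, for $0<x<\delta$,
\[
  |f_1*f_2(x)| \leq C_1 C_2 \int_{(0,x)} (x-y)^{\alpha_1-1}\, y^{\alpha_2-1}\,\diff y ,
\]
and substitute $y=xz$ to obtain $C_1 C_2\, x^{\alpha_1+\alpha_2-1}\int_{(0,1)}(1-z)^{\alpha_1-1} z^{\alpha_2-1}\,\diff z = C_1 C_2\, B(\alpha_1,\alpha_2)\, x^{\alpha_1+\alpha_2-1}$. Because $\alpha_1,\alpha_2>0$, the Beta integral is finite; this simultaneously shows that the convolution converges absolutely (so $f_1*f_2$ is well defined near zero) and yields the claim with $C=C_1 C_2\, B(\alpha_1,\alpha_2)$.

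For part~\ref{convo-asympt-ii} the key observation is that $\alpha_1\geq1$ makes $t\mapsto t^{\alpha_1-1}$ non-decreasing on $[0,\infty)$, hence $(x-y)^{\alpha_1-1}\leq x^{\alpha_1-1}$ whenever $0\leq y\leq x$. Therefore, for $0<x<\delta$,
\[
  |f_1*f_2(x)| \leq C_1\, x^{\alpha_1-1}\int_{(0,x)}|f_2(y)|\,\diff y \leq C_1\,\|f_2\|_{L^1[0,\delta]}\, x^{\alpha_1-1},
\]
which is finite since $f_2\in L^1_{\loc}(\real_+)$; here no growth bound on $f_2$ is needed, only local integrability, and we may take $C=C_1\|f_2\|_{L^1[0,\delta]}$. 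There is no genuine obstacle in this lemma: the argument is a one-line estimate in each case, and the only things to watch are restricting to a neighbourhood of zero before invoking the growth bounds and checking that the rescaling in~\ref{convo-asympt-i} produces a convergent Beta integral, which holds precisely because $\alpha_1,\alpha_2>0$.
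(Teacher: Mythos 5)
Your proof is correct and follows essentially the same route as the paper: part~\ref{convo-asympt-i} is the same pointwise bound followed by the substitution $y=xz$ giving the Beta integral (the paper's ``obvious change of variables''), and part~\ref{convo-asympt-ii} is the same $L^1$--$L^\infty$ estimate, with your monotonicity observation $(x-y)^{\alpha_1-1}\leq x^{\alpha_1-1}$ playing exactly the role of the paper's $\sup_{y\in[0,x]}y^{\alpha_1-1}=x^{\alpha_1-1}$. Your explicit care about restricting to a neighbourhood where the growth bounds hold is a small tidiness improvement over the paper's wording, but not a different argument.
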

\begin{proof}
	\ref{convo-asympt-i} By assumption, we find for all $x$ in a neighbourhood of zero
	\begin{align*}
		|f_1*f_2(x)|
        &\leq \int_0^x |f_1(x-y)||f_2(y)|\,\diff y
        \leq C_1C_2\int_0^x (x-y)^{\alpha_1-1}y^{\alpha_2-1}\,\diff y.
	\end{align*}
	As $\alpha_1,\alpha_2>0$ the assertion follows after an obvious change of variables.

\medskip\noindent
	\ref{convo-asympt-ii} The second assertion follows from H\"{o}lder's inequality as
	\begin{align*}
		|f_1*f_2(x)|
        \leq \int_0^x |f_2(x-y)|y^{\alpha_1-1}\,\diff y
        \leq \|f_2\|_{L^1([0,x])} \sup_{y\in[0,x]} |y^{\alpha_1-1}|
        = \|f_2\|_{L^1([0,x])} x^{\alpha_1-1},
	\end{align*}
    for $x\geq 0$ small enough.
\end{proof}

\begin{corollary}\label{corol_deriv}
    Let $f\in\cC^1(0,\infty)$ and assume that $|f(x)|<C x^{\alpha-1}$ for $x\in(0,1)$ and some constants $C,\alpha>0$. For all $k\in\nat$ and $n\geq k(\lfloor 2/\alpha\rfloor+1)$ one has $f^{*n}\in\cC^k(0,\infty)$. In particular,
	\begin{gather*}
        \partial^k  f^{*n} = (\partial  f^{*m} )^{*k}*f^{*(n-km)}
        \quad\text{with\ \ } m=\lfloor 2/\alpha\rfloor+1.
    \end{gather*}
\end{corollary}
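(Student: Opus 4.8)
The plan is to isolate a single analytic fact about the convolution power $g:=f^{*m}$ and then propagate smoothness through a polynomial convolution. First I would record the elementary asymptotics: iterating Lemma~\ref{convo-asympt}~\ref{convo-asympt-i} gives $|f^{*j}(x)|\le C_j x^{j\alpha-1}$ for $x$ near $0$ and every $j\ge 1$. The point of the choice $m=\lfloor 2/\alpha\rfloor+1$ is that $m\alpha>2$; in particular $m\alpha-1>1$, so $f^{*m}$ extends continuously to $0$ with $f^{*m}(0+)=0$, and likewise $f^{*j}(0+)=0$ whenever $j\alpha>1$. Also $f^{*j}\in\cC^1(0,\infty)$ for every $j\ge1$ by Lemma~\ref{lem-Conv-C1}~\ref{lem-Conv-C1-ii}.

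The heart of the matter is the claim that $g=f^{*m}$ is absolutely continuous on $[0,\infty)$, i.e.\ $\partial g\in L^1_{\loc}(\real_+)$; combined with $g(0+)=0$ this yields $g=\bone*\partial g$. Granting this, the conclusion is purely formal. For $n\ge km$,
\begin{align*}
  f^{*n}=g^{*k}*f^{*(n-km)}=\bone^{*k}*\bigl((\partial g)^{*k}*f^{*(n-km)}\bigr)=:\bone^{*k}*\Phi ,
\end{align*}
and $\Phi\in\cC^0(0,\infty)$ by repeated use of Lemma~\ref{lem-Conv-C1}~\ref{lem-Conv-C1-ii} (since $\partial g$ and $f^{*(n-km)}$ lie in $\cC^0(0,\infty)\cap L^1_{\loc}(\real_+)$; the case $n=km$, where $f^{*0}=\delta_0$ and $\Phi=(\partial g)^{*k}$, is included), while $\Phi\in L^1_{\loc}(\real_+)$ by \eqref{eq-Young}. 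Since $\bone^{*k}(x)=x^{k-1}/(k-1)!$ is a polynomial, with $\partial^{\,j}(\bone^{*k}*\Phi)=\bone^{*(k-j)}*\Phi$ for $0\le j<k$ and $\partial^{\,k}(\bone^{*k}*\Phi)=\Phi$, it follows that $f^{*n}\in\cC^k(0,\infty)$ with $\partial^{\,k}f^{*n}=\Phi=(\partial f^{*m})^{*k}*f^{*(n-km)}$, as claimed.

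It remains to prove $\partial f^{*m}\in L^1_{\loc}(\real_+)$, and this is the step I expect to be the main obstacle: unlike the Gaussian or bounded-variation cases, here $f$ is only assumed to be $\cC^1$ with no control on $f'$ near $0$, so one cannot differentiate the convolution naively. I would peel factors off using Lemma~\ref{lem-Conv-C1}~\ref{lem-Conv-C1-i}: whenever $j\alpha>1$ one has $f^{*j}(0+)=0$, hence $\partial f^{*(j+1)}=(\partial f^{*j})*f$. Starting from $j=m-1$ — note $(m-1)\alpha=\lfloor 2/\alpha\rfloor\,\alpha\ge 2-\alpha$, which exceeds $1$ when $\alpha\le1$, the only case relevant to Theorem~\ref{thm-smoothness} — and descending to $j=\ell_0:=\lfloor 1/\alpha\rfloor+1$, the smallest exponent with $\ell_0\alpha>1$, this gives $\partial f^{*m}=(\partial f^{*\ell_0})*f^{*(m-\ell_0)}$ with $f^{*(m-\ell_0)}\in L^1_{\loc}(\real_+)$; so it is enough to show $\partial f^{*\ell_0}\in L^1_{\loc}(\real_+)$, ideally with $|\partial f^{*\ell_0}(x)|\lesssim x^{\ell_0\alpha-2}$ near $0$ (observe $\ell_0\alpha-2>-1$). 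For this I would split the convolution $f^{*\ell_0}=f^{*(\ell_0-1)}*f$ at $x/2$ so that in each resulting integral one argument stays bounded away from $0$, then differentiate and integrate by parts to transfer the derivative off $f$ onto the antiderivative $F=\bone*f$, which is one degree smoother than $f$ and satisfies $|F(x)|\lesssim x^\alpha$; the boundary term so produced is of order $x^{\ell_0\alpha-2}$, and the condition $m\alpha>2$ (through the comparison of $\ell_0$ and $m$) is exactly what forces all resulting exponents to stay strictly above $-1$, hence locally integrable. The case $\alpha>1$ is similar but easier, since then $f(0+)=0$ from the outset.
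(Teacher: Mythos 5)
Your overall strategy coincides with the paper's: the whole point of $m=\lfloor 2/\alpha\rfloor+1$ is that iterating Lemma~\ref{convo-asympt}~\ref{convo-asympt-i} gives $|f^{*m}(x)|<C'x^{m\alpha-1}\leq C'x$ near $0$, hence $f^{*m}(0+)=0$, after which Lemma~\ref{lem-Conv-C1}~\ref{lem-Conv-C1-i} yields $\partial f^{*n}=(\partial f^{*m})*f^{*(n-m)}$ and one iterates in $k$. That is literally the paper's two-line proof, and your repackaging of the iteration through $f^{*n}=\bone^{*k}*\bigl((\partial f^{*m})^{*k}*f^{*(n-km)}\bigr)$, with $\bone^{*k}(x)=x^{k-1}/(k-1)!$, is a clean and correct equivalent of it.

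The difference is that you refuse to take $(\partial f^{*m})^{*k}*f^{*(n-km)}$ for granted and set yourself the task of proving $\partial f^{*m}\in L^1_{\loc}(\real_+)$ --- a point the paper does not address at all, since it is absorbed into the blanket statement of Lemma~\ref{lem-Conv-C1}~\ref{lem-Conv-C1-i}. You are right that this is where the real work would be, but your argument for it has a genuine gap exactly there. The downward peeling to $\partial f^{*\ell_0}$ with $\ell_0=\lfloor 1/\alpha\rfloor+1$ is fine (granted the base case, \eqref{eq-Young} pushes local integrability back up to $m$), but the base case does not go through as sketched. Splitting $f^{*\ell_0}=f^{*(\ell_0-1)}*f$ at $x/2$ and differentiating produces terms in which $f'$ is evaluated on $[x/2,x]$, and the hypotheses $f\in\cC^1(0,\infty)$, $|f(x)|<Cx^{\alpha-1}$ give no control whatsoever on $f'$ there (consider $f(x)=x^{\alpha-1}\sin(\ee^{1/x})$). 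The proposed integration by parts cannot rescue this: writing $f=F'$ with $F=\bone*f$ and integrating by parts moves the derivative onto the \emph{other} factor, producing either $f''$ (which need not exist) or $\partial f^{*(\ell_0-1)}$ (which, since $(\ell_0-1)\alpha\leq 1$, is even less controlled than what you started with); there is no manoeuvre that lands the derivative on $F$ itself. So, as a self-contained argument, the proposal is incomplete at the one step you identify as essential. If instead you invoke Lemma~\ref{lem-Conv-C1}~\ref{lem-Conv-C1-i} as stated, the way the paper does, then your first paragraph already reproduces the paper's proof for $k=1$ --- though note that even then, forming $(\partial f^{*m})^{*k}$ for $k\geq 2$ quietly presupposes the very integrability of $\partial f^{*m}$ at $0$ that you were trying to establish, so the issue you raise is real and is not resolved in the paper either.
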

\begin{proof}
     The case $k=0$ is trivial. Let $k=1$.  Lemma~\ref{convo-asympt} \ref{convo-asympt-i} shows that there is some constant $C'>0$ such that $|f^{*m}(x)|<C'x$ for all sufficiently small $x<1$.   Therefore, by Lemma \ref{lem-Conv-C1} \ref{lem-Conv-C1-i}, $\partial f^{*n}=(\partial f^{*m})*f^{*(n-m)}$ for $n \geq m$. If $k>1$, the assertion follows by iteration.
\end{proof}

The following lemma is essential for our main results as it ensures uniform convergence on compact subsets of $\real_+$ of the series expansions of the $q$-scale functions.
\begin{lemma}\label{lem-ConPowDecay}
Let $f\in L^1_{\loc}(\real_+)$, $z>0$ and $\epsilon>0$. Then the limit
		\begin{align}\label{eq-ConPowDecay}
			\lim_{n\to\infty} \frac{\|f^{*n}\|_{L^1([0,x])}}{\epsilon^n} =0.
		\end{align}	
		exists uniformly for all $x\in[0,z]$.
\end{lemma}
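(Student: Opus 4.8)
\emph{Plan.} The plan is to replace the crude bound coming from Young's inequality \eqref{eq-Young}, namely $\|f^{*n}\|_{L^1([0,z])}\le\|f\|_{L^1([0,z])}^{n}$ (which is useless once $\|f\|_{L^1([0,z])}\ge\epsilon$), by a weighted estimate: inserting an exponential weight $e^{-\lambda y}$ turns a convolution power into a Laplace-transform power $\big(\cL g(\lambda)\big)^{n}$, and since $\cL g(\lambda)\to0$ as $\lambda\to\infty$ one can eventually beat any $\epsilon^{n}$. Because everything happens on the fixed interval $[0,z]$, the weight costs only a single factor $e^{\lambda z}$, independent of $n$, which does no harm. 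As a preliminary reduction I would set $g:=|f|\,\bone_{[0,z]}$; then $g\in L^1(\real_+)$ since $f\in L^1_{\loc}(\real_+)$, and a short induction on $n$ — using $|f^{*(n+1)}(x)|\le\int_{[0,x]}|f^{*n}(x-y)|\,|f(y)|\,\diff y$ together with the observation that for $x\le z$ only the values of $f$ on $[0,x]\subseteq[0,z]$ enter this integral — gives $|f^{*n}(x)|\le g^{*n}(x)$ for all $x\in[0,z]$. Consequently $\|f^{*n}\|_{L^1([0,x])}\le\|g^{*n}\|_{L^1([0,x])}\le\|g^{*n}\|_{L^1([0,z])}$ for every $x\in[0,z]$, so it suffices to bound $\|g^{*n}\|_{L^1([0,z])}$, and any bound so obtained is automatically uniform in $x\in[0,z]$.

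\emph{The weighted estimate.} Fix $\lambda\ge0$. Since $g^{*n}\ge0$ and $e^{\lambda y}\le e^{\lambda z}$ on $[0,z]$, I would write
\begin{align*}
	\|g^{*n}\|_{L^1([0,z])}
	=\int_{[0,z]}g^{*n}(y)\,\diff y
	\le e^{\lambda z}\int_{[0,\infty)}e^{-\lambda y}g^{*n}(y)\,\diff y
	= e^{\lambda z}\,\cL\big[g^{*n}\big](\lambda).
\end{align*}
As $g\in L^1(\real_+)$, its Laplace transform converges for every $\beta\ge0$, so \eqref{eq-lapl_convo} (applied to the measure $g(y)\,\diff y$) gives $\cL[g^{*n}](\lambda)=\big(\cL g(\lambda)\big)^{n}$, and hence
\begin{align*}
	\|f^{*n}\|_{L^1([0,x])}\le e^{\lambda z}\big(\cL g(\lambda)\big)^{n}\qquad\text{for all }x\in[0,z]\text{ and all }\lambda\ge0.
\end{align*}

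\emph{Conclusion.} By dominated convergence, $\cL g(\lambda)=\int_{[0,z]}e^{-\lambda y}|f(y)|\,\diff y\to0$ as $\lambda\to\infty$. Given $\epsilon>0$, I would then fix $\lambda$ so large that $\cL g(\lambda)\le\epsilon/2$, whence, uniformly in $x\in[0,z]$,
\begin{align*}
	\frac{\|f^{*n}\|_{L^1([0,x])}}{\epsilon^{n}}
	\le e^{\lambda z}\left(\frac{\cL g(\lambda)}{\epsilon}\right)^{n}
	\le e^{\lambda z}\,2^{-n}\longrightarrow0\quad(n\to\infty),
\end{align*}
which is precisely \eqref{eq-ConPowDecay}. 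The only step requiring genuine care is the preliminary reduction: a convolution a priori uses the values of $f$ outside $[0,z]$, so the comparison with the truncation $g$ must be carried out pointwise on $[0,z]$ rather than merely in $L^1$-norm — this is also the only place where the hypothesis $f\in L^1_{\loc}(\real_+)$ (versus $f\in L^1(\real_+)$) is genuinely used. Everything after that is a direct computation.
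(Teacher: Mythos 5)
Your proof is correct and follows essentially the same route as the paper's: truncate $f$ to the compact interval, bound the $L^1$-norm of the convolution power by $\ee^{\lambda z}\left(\cL[|f|\bone_{[0,z]}](\lambda)\right)^n$ via the exponential weight and the multiplicativity of the Laplace transform, and then choose $\lambda$ so large that $\cL[|f|\bone_{[0,z]}](\lambda)<\epsilon/2$. Your version merely spells out the pointwise comparison $|f^{*n}|\leq g^{*n}$ on $[0,z]$ and the uniformity in $x$ a little more explicitly than the paper does.
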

\begin{proof}
	Fix $\epsilon,x>0$.   Since for $y\in[0,1]$ the convolution satisfies $\int_0^y f(y-z)f(z)\diff z = f*f(y)=(f\bone_{[0,x]})*(f\bone_{[0,x]})(y)$, we may assume  that $f(y)=0$ for $y>x$ and for fixed $x\in\real_+$. This ensures, in particular, that the Laplace transform of $f$ exists.   For all $\beta\geq 0$ we have
	\begin{align*}
		\|f^{*n}\|_{L^1([0,x])}\leq \ee^{\beta x} \cL[|f^{*n}|](\beta) \leq \ee^{\beta x} \left(\cL[|f|](\beta)\right)^n,
	\end{align*}
	and the assertion follows since there exists some $\beta_0\geq 0$ such that $\cL[|f|](\beta_0)<\frac 12\epsilon$.
\end{proof}


\subsubsection*{Acknowledgements}
\noindent
We would like to thank the reviewers for their helpful and constructive comments when preparing the revision of this paper.

\subsubsection*{Funding information} 
\noindent
R.L.\ Schilling gratefully acknowledges financial support through the DFG-NCN Beet\-hoven Classic 3 project SCHI419/11-1 \& NCN 2018/31/G/ST1/02252.

\subsubsection*{Competing interests} 
\noindent There were no competing interests to declare which arose during the preparation or publication process of this article.

\end{document}